\theoremstyle{plain}
\newtheorem{thm}{Theorem}[section]
\newtheorem{cor}[thm]{Corollary}
\newtheorem{lem}[thm]{Lemma}
\newtheorem{prop}[thm]{Proposition}
\theoremstyle{definition}
\newtheorem{defn}[thm]{Definition}
\theoremstyle{remark}
\newtheorem{rem}[thm]{Remark}
\theoremstyle{plain}
\numberwithin{equation}{section}
\newcommand{\del}{\delta}
\newcommand{\B}{{\mathbb B}}
\newcommand{\D}{{\mathbb D}}
\newcommand{\R}{{\mathbb R}}
\newcommand{\C}{{\mathbb C}}
\newcommand{\N}{{\mathbb N}}
\newcommand{\Z}{{\mathbb Z}}
\newcommand{\re}{{\rm Re}\,}
\newcommand{\dist}{\hbox{ \rm dist}}
\newcommand{\calB}{{\mathcal B}}
\newcommand{\calC}{{\mathcal C}}
\newcommand{\calD}{{\mathcal D}}
\newcommand{\calE}{{\mathcal E}}
\newcommand{\calK}{{\mathcal K}}
\newcommand{\calM}{{\mathcal M }}
\newcommand{\calT}{{\mathcal T}}
\newcommand{\calV}{{\mathcal V}}
\newcommand{\frakC}{{\mathfrak C}}
\def\udot#1{\ifmmode\oalign{$#1$\crcr\hidewidth.\hidewidth
    }\else\oalign{#1\crcr\hidewidth.\hidewidth}\fi}
\def\R{\mathbb{R}}
\def\Z{\mathbb{Z}}
\def\C{\mathbb{C}}
\def\one{\mbox{1\hspace{-4.25pt}\fontsize{12}{14.4}\selectfont\textrm{1}}}
\begin{document}
	
\title[]{Dyadic Carleson embedding and sparse domination of weighted composition operators on strictly  pseudoconvex domains}

\author{Bingyang Hu}
\address{%
  Department of Mathematics, Purdue University, 150 N. University St.,W. Lafayette, IN 47907, USA}
\email{hu776@purdue.edu}

\author{Zhenghui Huo}
\address{Zhenghui Huo, Department of Mathematics and Statistics, The University of Toledo,  Toledo, OH 43606-3390, USA}
\email{zhenghui.huo@utoledo.edu}
\begin{abstract}
In this paper, we study the behavior of the weighted composition operators acting on Bergman spaces defined on strictly pseudoconvex domains via the sparse domination technique from harmonic analysis. As a byproduct, we also prove a weighted type estimate for the weighted composition operators which is adapted to Sawyer-testing conditions. Our results extend the work by the first author, Li, Shi and Wick under a much more general setting. 
\end{abstract}
\date{\today}
\maketitle

\section{Introduction}
Let $\Omega$ be a smooth, bounded, strictly pseudoconvex domain in $\C^n$ and $H(\Omega)$ be the set of all holomorphic functions in $\Omega$ with the usual compact open topology. For $0<p<\infty$, let $L^p=L^p(\Omega)$ be the collection of measurable functions $f$ in $\Omega$, for which the  $L^p$-norm
\begin{equation} \label{20210316eq01}
\|f\|_p:=\left( \int_\Omega |f(z)|^p dV(z) \right)^{\frac{1}{p}}
\end{equation}
is finite, where $dV$ is the Lebesgue measure. The \emph{Bergman space} $A^p=A^p(\Omega)$ on $\Omega$ is defined to be the space $L^p \cap H(\Omega)$. It is well known that when $1 \le p<\infty$, $A^p$ is a Banach space with the norm \eqref{20210316eq01}; while for $p \in (0, 1)$, it is a Fr\'echlet space with the translation invariant metric:
$$
d(f, g):=\left\| f-g \right\|_p^p, \quad f, g \in A^p.
$$
It is well known that when $p=2$, $A^2$ becomes a Hilbert space with inner product
$$
\langle f, g \rangle:=\int_\Omega f(z)\overline{g(z)} dV(z), \quad f, g \in A^2.
$$
Let $K(z, \xi), z, \xi \in \Omega$ be the \emph{Bergman kernel (reproducing kernel)} associated to the space $A^2(\Omega)$: namely,
$$
f(z)=\int_\Omega K(z, \xi) f(\xi)dV(\xi), \quad \forall f \in A^2. 
$$
The theory of Bergman spaces is a classical topic and is well understood for domains such as the unit disc and unit ball. One might consult the excellent books \cite{HKZ00, Zhu04} for more systematic treatments for these spaces. 

Let $u \in H(\Omega)$ and $\varphi: \Omega \to \Omega$ be a holomorphic self-mapping. The \emph{weighted composition operator} is defined as
$$
W_{u, \varphi}(f)(z)=u(z) \cdot f \circ \varphi(z), \quad f \in H(\Omega), \ z \in \Omega. 
$$
When $u(z) \equiv 1$, then $W_{u, \varphi}$ becomes the \emph{composition operator} and is denoted by $C_\varphi$, and if $\varphi(z)=z$, then $W_{u, \varphi}$ becomes the \emph{multiplication operator} and is denoted by $M_u$. Note that here we do not require $\varphi$ to be a bi-holomorphic mapping on $\Omega$, and hence $W_{u, \varphi}$ in general does not coincides with $M_u$. We refer the interested reader \cite{CZ04, L95, S96} and the references therein for more information about these operators.

In recent years, the sparse domination technique was developed and considered by many mathematics working in harmonic analysis. This technique dates back to Lerner's alternative and simple proof of the $A_2$ theorem \cite{L13a, L13b}, which was proved originally by Hyt\"onen \cite{H12}. One key feature in Lerner's approach was to show that all Calder\'on-Zygmund operators can be bounded by a special collection of dyadic, positive operators called \emph{sparse operators}. This estimate led almost instantly to a proof of the sharp dependence of the constant in related weighted norm inequalities, that is, the $A_2$ theorem, which has been actively worked on for over a decade. After Lerner's work, there have been many improvements and extensions of his ideas to a wide range of spaces and operators, such as \cite{CR16, CDO18, L17, LN}. 

Sparse domination technique in complex function theory is a recent research topic. To our best knowledge, the first appearance of such an estimate appears in the work of Aleman, Pott and Reguera \cite{APR17}, in which, the Sarason conjecture on the Bergman spaces was studied via a pointwise sparse domination estimate of the Bergman projection. Later, by using similar idea, Rahm Tchoundja and Wick \cite{RTW17} were able to prove a sharp weighted estimate for the Berezin transform and Bergman projection (see, also \cite{HWW20b, HW20a, HW20, GHK20} for some extension of this work). In a recent paper
\cite{HLSW19}, the authors extended this line of research by studying the behavior of weighted composition operators on the upper half plane and the unit ball via the sparse domination technique. In particular, new characterizations of the boundedness and compactness of the weighted composition operators are considered; moreover, they also gave a new weighted estimate associated to these operators. 

The goal of this paper is to extend the results in \cite{HLSW19} to the weighted composition operators acting on smooth, bounded, strictly  pseudoconvex domains. Here, we would like to include a comparison between the current case and the case when $\Omega$ is the upper half plane $\R_+^2$ considered in \cite{HLSW19}.

\begin{enumerate}
    \item [(1).] The dyadic decomposition on the strictly  pseudoconvex domain is more delicate. When $\Omega=\R_+^2$, such a dyadic decomposition can be  constructed by extending (in the sense of Carleson tent on $\R_+^2$) any dyadic grids on $\R$; while for the case when $\Omega$ is a strictly  pseudoconvex domain, one need to use local holomorphic coordinate system and the orthogonal projection map (see, Lemma \ref{proj}) near $\mathbf b\Omega$ to construct such tents (see, Section 2 for more detail for this construction);
    
    \medskip
    
    \item [(2).] In \cite{HLSW19}, we have used the Hardy-Littlewood maximal operator (as well as the fractional Hardy-Littlewood maximal operator) to control the contribution of the average of $f$ on each Carleson tent, however, this is not the case when $\Omega$ is strictly  pseudoconvex. This is because the Kobayashi balls (see, \eqref{Kball}) are not doubling while the Euclidean balls in $\R^2$ are.  We overcome this difficulty by showing the collection of all ``tents" (that is the dyadic decomposition associated to $\Omega$) forms a Muckenhoupt basis, which then guarantees the strong $(p, p)$ estimate for the maximal operator associated to this basis (see, Theorem \ref{Mbasis}). We also prove the fractional maximal operator associated to such a basis also enjoys certain $L^p \to L^q$ estimate (see, Lemma \ref{fractional}). 
\end{enumerate}

We next make a remark about our choice of the domains. Recall that in \cite{HWW20a}, the pointwise for sparse bounds of the Bergman projection are actually valid when the domain $\Omega$ is a simple domain, which contains the smooth, bounded and strictly pseudoconvex domain as a particular case. We only consider strictly pseudoconvex domains here because we need the following off-diagonal estimate of the Bergman kernel function for $(z,w)$ near the boundary diagonal: 
\begin{equation} \label{eq2020}
|K(z, w)| \simeq K(z, z), \quad z, w \in \Omega. 
\end{equation} 
This estimate follows easily from the asymptotic expansion of the Bergman kernel on a strictly pseudoconvex domain \cite{Fefferman,BS76}. Yet, it is not clear if \eqref{eq2020} holds true on other classes of domains, such as finite type domains in $\mathbb C^2$ and convex domains of finite-type. The estimate \eqref{eq2020} plays a fundamental role in the classical complex function theory. See, for example,  \cite[Lemma 2.20]{Zhu04} for the case when $\Omega$ is the unit ball in $\C^n$. 

It is worth noting that our approach can also be extended to study the behavior of weighted composition operators on the weighted Bergman spaces $A^p_\alpha (\Omega)$ over strictly pseudoconvex domains, which are the collections of all holomorphic functions on $\Omega$ with
$$
\|f\|_{p, \alpha}^p:= \int_\Omega |f(z)|^p \textrm{dist}^\alpha (z, \mathbf b\Omega)dV(z)<\infty, 
$$
for $p \ge 1$ and $\alpha>-1$. This is because an asymptotic expansion for the weighted Bergman kernel of the weighted Bergman space $A^2_\alpha(\Omega)$ is still valid (see, e.g., \cite{E08}). 

\medskip

We organize the paper is as follows. In Section 2, we provide backgrounds on the construction of a dyadic decomposition on a strictly pseudoconvex domain, moreover, we also study the relation between the dyadic structure and the Kobayashi balls near the boundary $\mathbf b\Omega$; Section 3 proves a Carleson embedding type theorem associated to the dyadic decomposition. In Section 4, we first give new necessary and sufficient conditions for the weighted composition operators to be bounded and compact on weighted Bergman spaces via the sparse domination technique. Moreover, we establish a new weighted estimate which adapts Sawyer's classical test condition. 

Throughout this paper, for $a, b \in \R$, $a \lesssim b$ ($a \gtrsim b$, respectively) means there exists a positive number $C$, which is independent of $a$ and $b$, such that $a \leq Cb$ ($ a \geq Cb$, respectively). Moreover, if both $a \lesssim b$ and $a \gtrsim b$ hold, then we say $a \simeq b$.  

\medskip

\noindent \textbf{Acknowledgements.} The authors thank Brett Wick for useful discussions and suggestions on improving this paper. 

\bigskip

\section{Dyadic structure on strictly  pseudoconvex domain} \label{Sec02}

In this section, we first perform a dyadic decomposition for the strictly  pseudoconvex domain $\Omega$ and then study the sub-mean value property for such a structure (which is referred as ``kube" in the sequel). The main ingredient to prove this property is to show that such a dyadic decomposition ``admits" well with the Kobayashi balls near the boundary $\mathbf b\Omega$.

\subsection{Dyadic decomposition on $\Omega$.}

For this part, we mainly follow the framework built in \cite{HWW20a, HWW20b, HW20}. Heuristically, the construction of a dyadic decomposition on $\Omega$ consists of two steps:
\begin{enumerate}
    \item [I.] $\mathbf b \Omega$ is a space of homogeneous type, and hence it admits a dyadic decomposition;
    \item [II.] Using the dyadic structure on $\mathbf b \Omega$, one can associate a tree structure to $\Omega$. 
\end{enumerate}

We now turn to some details. 

\begin{defn}
A \emph{space of homogeneous type} is an ordered triple $(X, \rho, \mu)$, where $X$ is a set, $\varrho$ is a quasi-metric, that is
\begin{enumerate}
    \item [(1).] $\varrho(x, y)=0$ if and only if $x=y$;
    \item [(2).] $\varrho(x, y)=\varrho(y, x)$ for all $x, y \in X$;
    \item [(3).] $\varrho(x, y) \le \kappa (\varrho(x, z)+\varrho(y, z))$ for all $x, y, z \in X$,
\end{enumerate}
for some constant $\kappa>0$, and the non-negative Borel measure $\mu$ is doubling, that is, 
$$
0<\mu(B(x, 2r)) \le D\mu(B(x, r))<\infty, \quad \textrm{for some} \ D>0,
$$
where $B(x, r):=\{y \in X, \varrho(x, y)<r\}$, for $x \in X$ and $r>0$.
\end{defn}
The first step is to show that $\mathbf b\Omega$ admits a structure of space of homogeneous type. Such a construction is essentially contained in a series work in \cite{NRSW3, McN94a, McN94b, McN03}, which we recall below.  Let $\Omega$ be a strictly  pseudoconvex domain with defining function $\rho$ such that $\left| \nabla \rho(\xi) \right|=1$ for all $\xi \in \mathbf b\Omega$. Let further, $U$ be a small neighborhood of $\mathbf b\Omega$. For each $q \in U$ and $r>0$ sufficiently small, there exists a holomorphic coordinate system $z=(z_1, \dots, z_n)$ centered at $q$, such that 
$$
D(q, r):= \left\{ z \in \C^n: |z_j|<\tau_j(q, r),  \ j=1, \dots, n \right\}
$$
is the largest polydisc centered at $q$ on which $\rho$ changes by no more than $r$ from its value at $q$, that is, if $z \in D(q, r)$, then $|\rho(z)-\rho(q)|<r$. Here, 
$$
\tau_1(q, r)=r \quad \textrm{and} \quad \tau_j(q, r) \approx r^{1/2}, \ j=2, 3, \dots, n. 
$$
The polydisc $D(q, r)$ satisfies certain ``covering properties" (see, \cite{McN94a}):
\begin{enumerate}
    \item [(1).] There exists a constant $C>0$, such that for any $q_1, q_2 \in U \cap \Omega$ with $D(q_1, r) \cap D(q_2, r) \neq \emptyset$ with $r>0$, we have
    $$
    D(q_2,r) \subset CD(q_1, r) \quad \textrm{and} \quad D(q_1, r) \subseteq CD(q_2, r);
    $$
    \item [(2).] There exists a constant $c>0$ such that for $q \in U \cap \Omega$ and $r>0$, we have
    $$
    D(q, 2\del) \subset cD(q, \del). 
    $$
\end{enumerate}
By a partition of unity argument in \cite{McN94a}, $D(p, \del)$ induces a global quasi-metric on $\Omega$.

Moreover, these polydiscs induce a quasi-metric on $b\Omega$. More precisely, for any $z \in \mathbf b\Omega$ and $r>0$ sufficiently small, we let
\begin{equation} \label{bball}
B(z, r):=D(z, r) \cap \mathbf b\Omega
\end{equation}
and this allows us to define
$$
d(z, z'):=\inf \left\{r>0, z' \in B(z, r)\right\}, \quad \textrm{for any} \quad z, z' \in \mathbf b\Omega. 
$$
Note that the quasi-metric balls $B(q, r)$ also enjoy the ``covering properties" as $D(q, r)$:
\begin{enumerate}
    \item [(1).] There exists a constant $C>0$, such that for any $z_1, z_2 \in U \cap \mathbf b\Omega$ with $B(z_1, r) \cap B(z_2, r) \neq \emptyset$ with $r>0$, we have
    $$
    B(z_2,r) \subset CB(z_1, r) \quad \textrm{and} \quad B(z_1, r) \subseteq CB(z_2, r);
    $$
    \item [(2).] There exists a constant $c>0$ such that for $z \in U \cap \mathbf b\Omega$ and $r>0$, we have
    $$
    B(z, 2\del) \subset cB(z, \del). 
    $$
\end{enumerate}

\begin{thm} \label{McNeal}
Let $\Omega$ be a strictly  pseudoconvex domain. Then the triple $(\mathbf b \Omega, d, \sigma)$ becomes a space of homogeneous type, where $d\sigma$ is the normalized surface measure on $\mathbf b \Omega$. 
\end{thm}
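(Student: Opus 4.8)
The plan is to establish the two ingredients of a space of homogeneous type in turn: that $d$ is a genuine quasi-metric, and that the surface measure $\sigma$ is doubling for the balls $B(z,r)=D(z,r)\cap\mathbf b\Omega$ (the normalization of $\sigma$ being irrelevant to doubling). The first part is bookkeeping built on the covering properties of the $B(z,r)$ recalled above and is in essence already contained in \cite{NRSW3, McN94a}; the substantive point is the doubling bound, which I would derive from a uniform volume comparison $\sigma\big(B(z,r)\big)\simeq r^{n}$.

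For the quasi-metric axioms, the separation property $d(z,z')=0\iff z=z'$ holds because the polyradii $\tau_j(z,r)$ all tend to $0$ as $r\to 0$, so the polydiscs $D(z,r)$ shrink down to $\{z\}$. Symmetry is not visible from the definition, so I would first prove quasi-symmetry: if $z'\in B(z,r)$ then $z'\in D(z,r)\cap D(z',r)$, whence covering property (1) gives $D(z,r)\subseteq CD(z',r)\subseteq D(z',C^2r)$, so $z\in B(z',C^2r)$ and $d(z',z)\lesssim d(z,z')$; running this both ways and replacing $d$ by $(z,z')\mapsto\max\{d(z,z'),d(z',z)\}$ — which changes the balls only by a bounded dilation and so affects nothing below — I may assume $d$ symmetric. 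The quasi-triangle inequality comes out the same way: if $z'\in B(z,r)$ and $z''\in B(z',s)$ with $s\le r$, then (since the $D(q,\cdot)$ increase with the radius up to bounded dilation) $z''$ lies in a bounded dilate of $D(z',r)$, and since $z'\in D(z,r)\cap D(z',r)$ covering property (1) puts $D(z',r)$ inside a bounded dilate of $D(z,r)$; combining, $z''\in B(z,C'r)$ and $d(z,z'')\lesssim d(z,z')+d(z',z'')$. As the polydiscs $D(z,r)$ are only built for small $r$, I would invoke the partition-of-unity argument quoted above to obtain an honest global quasi-metric on $\mathbf b\Omega$; at scales comparable to the diameter of $\mathbf b\Omega$ every quantity below is comparable to a positive constant, so nothing extra is needed there.

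For doubling I would work in the holomorphic coordinates centered at $z$ in which, using strict pseudoconvexity and $|\nabla\rho|\equiv 1$, one has $\rho(w)=\re w_1+\sum_{j=2}^{n}|w_j|^2+O(|w|^3)$; then near $z$ the boundary $\{\rho=0\}$ is a graph $\re w_1=h(\im w_1,w_2,\dots,w_n)$ with $|h|\lesssim|w'|^2$, $w'=(w_2,\dots,w_n)$, and parametrizing $\mathbf b\Omega$ near $z$ by $(t,w')\in\R\times\C^{n-1}$ with $t=\im w_1$ the surface measure is comparable to $dt\,dV(w')$, with constants uniform in $z$ by compactness of $\mathbf b\Omega$. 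Since $D(z,r)=\{|w_1|<\tau_1=r,\ |w_j|<\tau_j\simeq r^{1/2}\}$ and $|h|\lesssim|w'|^2\lesssim r$ on the relevant range, $B(z,r)$ is trapped between $\{|t|<cr,\ |w_j|<cr^{1/2}\}$ and $\{|t|<Cr,\ |w_j|<Cr^{1/2}\}$ for uniform $0<c<C$, each of $dt\,dV(w')$-measure $\simeq r\cdot(r^{1/2})^{2(n-1)}=r^{n}$; hence
\[
\sigma\big(B(z,r)\big)\simeq r^{n},\qquad z\in\mathbf b\Omega,\ 0<r<r_0,
\]
uniformly. This yields at once $0<\sigma(B(z,r))<\infty$ (positive since $B(z,r)$ is a relative neighborhood of $z$, finite since $\sigma$ is a finite measure) and $\sigma(B(z,2r))\simeq 2^{n}\sigma(B(z,r))$, i.e.\ doubling for small $r$; for $r\gtrsim r_0$ the ratio $\sigma(B(z,2r))/\sigma(B(z,r))$ lies between $1$ and $\sigma(\mathbf b\Omega)/\inf_{z}\sigma(B(z,r_0))<\infty$, so doubling holds at all scales, and together with the quasi-metric step $(\mathbf b\Omega,d,\sigma)$ is a space of homogeneous type. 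I expect the one real obstacle to be making $\sigma(B(z,r))\simeq r^{n}$ genuinely uniform over $\mathbf b\Omega$ — controlling the holomorphic coordinate changes and the graphing of $\mathbf b\Omega$ uniformly in the center — which is exactly where the smoothness and compactness of $\mathbf b\Omega$ and the uniform strict pseudoconvexity enter.
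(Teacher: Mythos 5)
The paper does not actually prove Theorem~\ref{McNeal}: the text remarks that the construction ``is essentially contained in a series work in \cite{NRSW3, McN94a, McN94b, McN03}'' and then states the theorem without argument, so there is no in-text proof to compare against. Your reconstruction is correct and is the standard argument from those references. In particular, deriving symmetry and the quasi-triangle inequality from the covering/engulfing properties of McNeal's polydiscs is the right move, and the symmetrization step you insert (passing from $d$ to $\max\{d(z,z'),d(z',z)\}$) is genuinely necessary, since the paper's Definition of a space of homogeneous type demands $\varrho(x,y)=\varrho(y,x)$ on the nose rather than up to constants; it is worth saying explicitly, as you do implicitly, that this replaces the balls by sets sandwiched between fixed dilates of the old ones and therefore preserves the covering properties. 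For doubling, the key estimate $\sigma(B(z,r))\simeq r^{n}$ via the graph parametrization in the coordinates that normalize $\rho(w)=\re w_1+\sum_{j\ge 2}|w_j|^2+O(|w|^3)$ is exactly the mechanism McNeal uses; one small point worth making explicit is that for strictly pseudoconvex domains this coordinate system can be taken independent of $r$, which is what makes the family $D(z,r)$ genuinely nested in $r$ and lets your quasi-triangle argument ($D(z',s)\subseteq D(z',r)$ for $s\le r$) go through without further comment — this is precisely where strict pseudoconvexity, as opposed to mere finite type, is quietly being used. Taken together your proof is sound and supplies the detail the paper chose to outsource.
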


Note that for any $z \in b\Omega$ and $r>0$, we can also write $B(z, r)$ to be the collection $\left\{w \in \mathbf b \Omega: d(z, w)<r \right\}$. 

\medskip

As a consequence of Theorem \ref{McNeal}, we are able to decompose $\mathbf b \Omega$ in a dyadic way. More precisely, we have the following result.

\begin{thm} [\cite{HK12}] \label{HKdecomp}
There exists a family of sets $\calD=\bigcup\limits_{k \ge 1} \calD_k$, called a dyadic grid associated to $b \Omega$, constants $\frakC>0, 0<\del, \epsilon<1$, and a corresponding family of points $\{c(Q)\}_{Q \in \calD}$, such that 
\begin{enumerate}
    \item [(1).] $\mathbf b \Omega=\bigcup\limits_{Q \in \calD_k} Q$, for all $k \in \Z$; 
    \item [(2).] For any $Q_1, Q_2 \in \calD$, if $Q_1 \cap Q_2 \neq \emptyset$, then $Q_1 \subset Q_2$ or $Q_2 \subset Q_1$;
    \item [(3).] For every $Q \in \calD_k$, there exists at least one child cube $Q_c \in \calD_{k+1}$ such that $Q_c \subset Q$; 
    \item [(4).] For every $Q \in \calD_k$, there exists exactly one parent $\hat{Q} \in \calD_{k-1}$ such that $Q \subset \hat{Q}$;
    \item [(5).] If $Q_2$ is an offspring of $Q_1$, then $\sigma(Q_2) \ge \epsilon\sigma(Q_1)$;
    \item [(6).] For any $Q \in \calD_k$, $B(c(Q), \del^k) \subset Q \subset B(c(Q), \frakC \del^k)$. 
\end{enumerate}
\end{thm}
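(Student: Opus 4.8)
The plan is to read this off as the abstract ``system of dyadic cubes on a space of homogeneous type'' theorem applied to the triple $(\mathbf b\Omega, d, \sigma)$, which is legitimate by Theorem~\ref{McNeal}, and to reproduce the construction of Christ in the refined form of Hyt\"onen--Kairema \cite{HK12}. Since $\mathbf b\Omega$ is compact, no adjacent-systems (``one-third trick'') refinement is needed: a single grid already satisfies (1)--(6). First I would fix $\del\in(0,1)$ small relative to the quasi-triangle constant $\kappa$ of $d$ (say $2\kappa^2\del\le 1$), and for each $k\ge 1$ choose a maximal $\del^k$-separated net $\mathcal Y_k=\{z^k_\alpha\}_\alpha\subset\mathbf b\Omega$; these are finite by compactness, distinct level-$k$ centers are at distance $\ge\del^k$, and maximality makes the balls $B(z^k_\alpha,\del^k)$ cover $\mathbf b\Omega$.

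Next I would organize $\bigsqcup_k\mathcal Y_k$ into a tree by a nearest-ancestor rule: send $z^{k+1}_\beta$ to the point $z^k_\alpha$ minimizing $d(z^{k+1}_\beta,z^k_\alpha)$, breaking ties by a fixed linear order on the index sets. The covering property of the level-$k$ net forces $d(z^{k+1}_\beta,z^k_\alpha)<\del^k$, and composing parent maps produces ancestor maps $a^k_l:\mathcal Y_l\to\mathcal Y_k$ for $l>k$ together with a uniform drift estimate $d(z^l_\beta,a^k_l(z^l_\beta))\le C_0\,\del^k$, $C_0=C_0(\kappa,\del)$, from summing a geometric series in $\kappa\del$. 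I would then define the cubes top-down from this tree, $Q^k_\alpha:=\overline{\bigcup\{B(z^l_\beta,\del^l):l\ge k,\ a^k_l(z^l_\beta)=z^k_\alpha\}}$ (with $a^k_k=\mathrm{id}$), and set $c(Q^k_\alpha):=z^k_\alpha$ and $\mathcal D_k:=\{Q^k_\alpha\}_\alpha$. With this setup, (3) and (4) are immediate tree bookkeeping; (1) holds because each level's nets already cover $\mathbf b\Omega$ and every point acquires level-$k$ ancestor data through the nets; and (2) is exactly the tree property of the ancestor relation, the only subtlety being that $\del$ must be small enough to upgrade ``the closures of two cubes meet'' to ``the ancestor of one center is the other center'' — this is where net separation and the drift estimate are used.

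For the quantitative clauses: the outer containment $Q^k_\alpha\subset B(z^k_\alpha,\frakC\del^k)$ falls out of the drift bound with $\frakC\sim\kappa(1+C_0)$. The inner containment $B(z^k_\alpha,\del^k)\subset Q^k_\alpha$ in (6) is the real work: given $w$ with $d(w,z^k_\alpha)<\del^k$, for each $l\ge k$ one locates a level-$l$ net ball containing $w$ and argues, using separation of $\mathcal Y_l$ together with $2\kappa^2\del\le1$, that the level-$k$ ancestor of that net point can only be $z^k_\alpha$ (no competitor is close enough), so $w\in Q^k_\alpha$. Finally (5) follows from (6) and doubling: an offspring $Q^{k+1}_\beta$ has center within $C_0\del^k$ of $c(Q^k_\alpha)$, so $Q^k_\alpha\subset B(c(Q^k_\alpha),\frakC\del^k)\subset B(c(Q^{k+1}_\beta),C_1\del^{k+1})$, and applying the doubling inequality a bounded number $N$ of times gives $\sigma(Q^k_\alpha)\le D^N\sigma\bigl(B(c(Q^{k+1}_\beta),\del^{k+1})\bigr)\le D^N\sigma(Q^{k+1}_\beta)$, i.e.\ (5) with $\epsilon:=D^{-N}$.

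I expect the main obstacle to be exactly the inner-ball containment in (6): preventing $Q^k_\alpha$ from being ``eroded'' near its own center by deep descendants that get reassigned to a neighboring center. Controlling this is precisely what forces $\del$ to be chosen small compared to $\kappa$ and what makes the nearest-ancestor-with-tie-breaking bookkeeping of Step~2 necessary. Once (6) is in hand, (5) is a soft consequence of the doubling of $\sigma$, and (1)--(4) are essentially formal consequences of the tree and covering structure; the whole argument is robust and uses nothing about $\mathbf b\Omega$ beyond its being a (compact) space of homogeneous type.
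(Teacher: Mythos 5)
The paper does not prove this theorem; it is quoted verbatim from Hyt\"onen--Kairema \cite{HK12} after reducing to the fact (Theorem~\ref{McNeal}) that $(\mathbf b\Omega,d,\sigma)$ is a space of homogeneous type, so the ``paper's proof'' is just that citation. Your sketch correctly makes that reduction and then reproduces the Christ-type net-and-tree scheme, and your treatment of (1), (3), (4), (6), and the derivation of (5) from (6) plus doubling are all in the right spirit.

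The genuine gap is in your handling of property (2), and it is precisely the point where \cite{HK12} improves on Christ's original construction. You define $Q^k_\alpha$ as the \emph{closure} of a union of balls around descendant net points and then assert that ``the closures of two cubes meet'' can be upgraded to ``the ancestor of one center is the other center'' by taking $\delta$ small. This upgrade is false with closed sets: two same-level cubes built as closures of complementary descendant sets will generically share boundary points (and, since the level-$l$ balls $B(z^l_\beta,\delta^l)$ overlap in order to cover, the unions can even overlap on sets of positive measure before you take closures), while being distinct, which contradicts the dichotomy in (2). No shrinking of $\delta$ fixes this; what fixes it is replacing closed cubes by \emph{half-open} cubes, which is the main technical device of \cite{HK12}: one builds both ``closed'' cubes $\bar Q^k_\alpha$ and ``open'' cubes $\tilde Q^k_\alpha$ from the same tree and then interpolates (via a consistent selection rule respecting the tree) to obtain Borel cubes $Q^k_\alpha$ with $\tilde Q^k_\alpha\subset Q^k_\alpha\subset\bar Q^k_\alpha$ that tile $X$ exactly at each level and are genuinely nested or disjoint across levels. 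Without this step your $\mathcal D_k$ only covers up to boundary overlaps, so (1) and (2) cannot hold simultaneously as stated. You also correctly identified the inner containment in (6) as delicate, but that is handled by net separation and a small $\delta$ exactly as you describe; the half-open interpolation is the piece your sketch is missing.
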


The last property in Theorem \ref{HKdecomp} is referred as the \emph{sandwich property}, and each $Q \in \calD$ is referred as a \emph{dyadic cubes} with center $c(Q)$ and sidelength $\ell(Q)=\del^k$. We make a remark that these cubes are in general not standard Euclidean cubes in $\R^n$ or any quasi-metric balls $B(z, r)$ induced by $d$. Moreover, we may also assume $\del$ is sufficiently small (otherwise we could replace $\del$ by $\del^{N_0}$ for some $N_0 \in \N$ is sufficiently large). To this end, for the purpose of simplicity, for each dyadic grid $\calD$ on $\mathbf b\Omega$, we let $\calD^0$, which is the $0$-th generation, to be the collection $\{\mathbf b\Omega\}$ and we refer it has ``sidelength" $1$. 

\begin{prop} [{\cite[Theorem 4.1]{HK12}}] \label{adjac}
There exists a finite collection of dyadic grids $\calD^t, t=1, \dots, K_0$, such that for any quasi-metric ball $B=B(\xi, r) \subset \mathbf b \Omega$, there exists a dyadic cube $Q \in \calD^t$, such that
$$
B \subseteq Q \quad \textrm{and} \quad \ell(Q) \le \widetilde{\frakC}r,
$$
where $\frakC$ is an absolute constant which only depends on $\Omega$. Moreover, the constants $\frakC_t, \del_t$ and $\varepsilon_t$ constructed in Theorem \ref{HKdecomp} can be taken to be the same, that is $\frakC_1=\frakC_2=\dots=\frakC_{K_0}$, $\del_1=\del_2=\dots=\del_{K_0}$ and $\epsilon_1=\epsilon_2=\dots=\epsilon_{K_0}$. 
\end{prop}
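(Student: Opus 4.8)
The statement is essentially the Hytönen–Kairema "adjacent dyadic systems" theorem transplanted to the homogeneous space $(\mathbf b\Omega, d, \sigma)$, so the plan is to invoke that machinery and then check the two compatibility refinements claimed at the end. First I would recall from Theorem \ref{McNeal} that $(\mathbf b\Omega, d, \sigma)$ is a genuine space of homogeneous type, with quasi-triangle constant $\kappa$ and doubling constant $D$ depending only on $\Omega$; since $\mathbf b\Omega$ is compact, the radii $r$ involved are bounded, which only makes the construction easier. The Hytönen–Kairema construction produces, for a fixed small parameter $\delta$, a finite family of grids $\calD^1,\dots,\calD^{K_0}$ (the cardinality $K_0$ and the admissible $\delta$ depend only on $\kappa$ and $D$) obtained by translating a single "random" grid by a bounded set of base points; the key output is: for every ball $B(\xi,r)$ there is some $t$ and some $Q\in\calD^t$ with $B(\xi,r)\subseteq Q$ and $\ell(Q)\le \widetilde{\frakC}\, r$, where $\widetilde{\frakC}$ absorbs $\kappa$, $\delta$ and the sandwich constant $\frakC$. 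So the existence part is a direct citation of \cite[Theorem 4.1]{HK12} once the homogeneous-space hypotheses are in place.

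The second half — that the structural constants $\frakC_t,\delta_t,\varepsilon_t$ from Theorem \ref{HKdecomp} can be taken equal across all $t=1,\dots,K_0$ — I would handle by examining how each grid $\calD^t$ is built. Each $\calD^t$ arises from the \emph{same} Hytönen–Kairema recipe with the \emph{same} choice of $\delta$ and the same dyadic scaling; the only thing that changes between different $t$ is the choice of the reference points $\{c(Q)\}$ (equivalently, a shift in the indexing set). Consequently the sandwich property in item (6), the offspring-measure lower bound in item (5), and the parameter $\delta$ controlling sidelengths are determined by $\kappa, D, \delta$ alone and not by the particular shift, so one simply sets $\frakC:=\max_t \frakC_t$, $\varepsilon:=\min_t \varepsilon_t$, and keeps the common $\delta$; passing to these uniform constants only weakens the inclusions in (6) and the inequality in (5), which remain valid. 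I would also remark (as the paper already does after Theorem \ref{HKdecomp}) that one is free to shrink $\delta$ to a common power $\delta^{N_0}$ if needed to make all grids share the same scale, and to append the $0$-th generation $\calD^{t,0}=\{\mathbf b\Omega\}$ to each grid.

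The only genuine wrinkle — and what I would flag as the main obstacle — is bookkeeping rather than mathematics: one must make sure that the constant $\widetilde{\frakC}$ in the containment $\ell(Q)\le\widetilde{\frakC}r$ is stated in terms of the \emph{final, uniformized} constants, and that the quasi-metric $d$ used here is exactly the one from \eqref{bball}–\eqref{bball}, whose quasi-triangle constant comes from the covering properties of the McNeal polydiscs rather than from an abstract axiom; this is where one must quote \cite{McN94a} for the value of $\kappa$. Beyond that, there is nothing to prove that is not already in \cite{HK12}: the proposition is a packaging result, and the write-up should consist of (i) verifying the hypotheses of \cite[Theorem 4.1]{HK12} via Theorem \ref{McNeal}, (ii) citing its conclusion, and (iii) the one-line observation that the per-grid constants depend only on $(\kappa, D,\delta)$ and hence may be unified.
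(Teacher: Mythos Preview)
Your proposal is correct and matches the paper's approach exactly: the paper provides no proof of this proposition and simply cites \cite[Theorem 4.1]{HK12}, so your plan of verifying the homogeneous-space hypotheses via Theorem \ref{McNeal} and then invoking the Hyt\"onen--Kairema adjacent-grids theorem is precisely what is intended. The observation that the structural constants can be unified because all grids arise from the same recipe with the same parameters is also the standard reading of that construction.
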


To this end, we refer the dyadic grids $\calD^t, t=1, \dots, K_0$ the \emph{adjacent grids} associated to $\Omega$.

\medskip

Our next step is to extend the balls or cubes on $\mathbf b \Omega$ inside $\Omega$ by inducing ``tents" associated to them, this procedure is a generalization of the Carleson tent in both upper half plan or unit disc (ball) setting (see, e.g. \cite{Zhu04}). The key ingredient in defining such structures is to make use of the orthogonal projection near $\mathbf b \Omega$, which we now recall. 

For $\varepsilon>0$ sufficiently small, set
$$
N_\varepsilon(\mathbf b \Omega):=\{\omega \in \C^n: \dist(\omega, \mathbf b \Omega)<\varepsilon \},
$$
where $\dist(\cdot, \cdot)$ is the Euclidean distance in $\C^n$. The following Balogh-Bonk projection is important. 

\begin{lem} [\cite{BB00}] \label{proj}
There exists a sufficiently small $\varepsilon_0>0$ and a map $\pi: N_{\varepsilon_0}(\mathbf b \Omega) \to \mathbf b \Omega$ such that
\begin{enumerate}
    \item [(1).] For each point $z \in N_{\varepsilon_0}(\mathbf b \Omega)$ there exists a unique point $\pi(z) \in \mathbf b \Omega$ such that
    $$
    \dist(z, \pi(z))=\dist(z, \mathbf b \Omega);
    $$
    \item [(2).] For $p \in \mathbf b \Omega$, the fiber $\pi^{-1}(p)=\left\{p-\varepsilon n(p): -\varepsilon_0 \le \varepsilon<\varepsilon_0 \right\}$, where $n(p)$ is the outer unit normal vector of $\mathbf b \Omega$ at point $p$;
    \item [(3).] $\pi$ is smooth on $N_{\varepsilon_0}(\mathbf b \Omega)$;
    \item [(4).] If the defining function $\rho$ is the signed distance to the boundary $\mathbf b \Omega$, then we have
    $$
    \nabla \rho(z)=n(\pi(z)) \quad \textrm{for} \ z\in N_{\varepsilon_0} (\mathbf b \Omega). 
    $$
\end{enumerate}
\end{lem}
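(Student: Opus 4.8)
The plan is to recognise Lemma~\ref{proj} as a quantitative form of the tubular neighbourhood theorem for the compact smooth real hypersurface $\mathbf{b}\Omega \subset \C^n \cong \R^{2n}$, combined with two elementary facts about nearest points and about the signed distance function. First I would introduce the normal map $\Phi(p,t) := p - t\,n(p)$, defined for $p \in \mathbf{b}\Omega$ and $t \in (-\varepsilon_1,\varepsilon_1)$ with $\varepsilon_1$ small, where $n(p)$ denotes the outward unit normal to $\mathbf{b}\Omega$ at $p$. Since $\mathbf{b}\Omega$ and $n(\cdot)$ are smooth, $\Phi$ is smooth, and its differential at $(p,0)$ carries $T_p(\mathbf{b}\Omega)\oplus\R\partial_t$ isomorphically onto $T_p(\mathbf{b}\Omega)\oplus\R\,n(p)=\R^{2n}$; hence, by the inverse function theorem, $\Phi$ is a local diffeomorphism near every point of $\mathbf{b}\Omega\times\{0\}$.

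The crux of the argument --- really the only non-formal point --- is to upgrade this to injectivity on a slab of uniform thickness: there is $\varepsilon_1>0$ such that $\Phi$ restricted to $\mathbf{b}\Omega\times(-\varepsilon_1,\varepsilon_1)$ is injective, hence a diffeomorphism onto an open set $V\supseteq\mathbf{b}\Omega$. I would prove this by contradiction using compactness of $\mathbf{b}\Omega$: if $(p_k,t_k)\neq(q_k,s_k)$ with $t_k,s_k\to 0$ and $\Phi(p_k,t_k)=\Phi(q_k,s_k)$, then passing to convergent subsequences $p_k\to p$, $q_k\to q$ and using $\Phi(\cdot,0)=\mathrm{id}_{\mathbf{b}\Omega}$ forces $p=q$, after which for large $k$ both pairs lie in a single neighbourhood on which $\Phi$ is a diffeomorphism, a contradiction. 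Since $V$ is an open neighbourhood of the compact set $\mathbf{b}\Omega$, I may shrink to some $\varepsilon_0\le\varepsilon_1$ with $N_{\varepsilon_0}(\mathbf{b}\Omega)\subseteq V$, and then define $\pi:=\mathrm{pr}_1\circ\Phi^{-1}$ on $N_{\varepsilon_0}(\mathbf{b}\Omega)$, where $\mathrm{pr}_1$ is the projection onto the first factor. Smoothness of $\pi$, that is (3), is then immediate.

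To get (1), I would show that for $z\in N_{\varepsilon_0}(\mathbf{b}\Omega)$ the point $\pi(z)$ is the unique nearest point of $\mathbf{b}\Omega$ to $z$. Existence of a nearest point $q$ follows from compactness; the first-order condition for $\xi\mapsto|z-\xi|^2$ on $\mathbf{b}\Omega$ forces $z-q\perp T_q(\mathbf{b}\Omega)$, so $z=\Phi(q,s)$ for some $s$ with $|s|=|z-q|=\dist(z,\mathbf{b}\Omega)<\varepsilon_0\le\varepsilon_1$; injectivity of $\Phi$ on the slab then identifies $q$ with $\pi(z)$ and shows the nearest point is unique, and by construction $\dist(z,\pi(z))=\dist(z,\mathbf{b}\Omega)$. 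Running the same computation in reverse shows that $\Phi(p,s)$ has nearest point $p$ exactly when $|s|<\varepsilon_0$, so $\pi^{-1}(p)=\{\Phi(p,s):|s|<\varepsilon_0\}=\{p-\varepsilon\,n(p):|\varepsilon|<\varepsilon_0\}$, which is (2). Finally, for (4), when $\rho$ is the signed distance function one checks directly that $\rho(\Phi(p,t))=-t$, so inverting yields the identity $z=\pi(z)+\rho(z)\,n(\pi(z))$ on $N_{\varepsilon_0}(\mathbf{b}\Omega)$; differentiating this in an arbitrary direction $v$ and pairing with the unit vector $n(\pi(z))$, while using that $d\pi_z(v)\in T_{\pi(z)}(\mathbf{b}\Omega)$ is orthogonal to $n(\pi(z))$ and that $\langle dn(w),n\rangle=0$ because $|n|\equiv 1$, gives $\langle\nabla\rho(z),v\rangle=\langle n(\pi(z)),v\rangle$ for all $v$, i.e.\ $\nabla\rho(z)=n(\pi(z))$.

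I expect the uniform-slab injectivity step to be the main obstacle; once $\Phi$ is a genuine diffeomorphism onto a tubular neighbourhood, parts (1)--(4) are routine verifications. Alternatively, the entire lemma may simply be quoted from \cite{BB00} or from any standard reference on tubular neighbourhoods of smooth hypersurfaces, the sketch above being included for the reader's convenience.
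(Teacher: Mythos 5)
Your proof is correct, but note that the paper itself does not prove Lemma~\ref{proj}: it is stated as a citation to Balogh--Bonk \cite{BB00} and used as a black box. What you have written is therefore an independent, self-contained verification via the standard tubular-neighbourhood construction for a compact smooth hypersurface, and all the steps check out: the inverse-function-theorem local diffeomorphism, the compactness argument upgrading local to uniform-slab injectivity, the first-order (Lagrange-type) condition identifying the nearest point with $\pi(z)$, and the differentiation of the identity $z=\pi(z)+\rho(z)\,n(\pi(z))$ together with the facts $d\pi_z(v)\perp n(\pi(z))$ and $\langle dn,n\rangle=0$ to obtain $\nabla\rho=n\circ\pi$. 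The one cosmetic discrepancy is in item~(2): your argument naturally produces the symmetric open interval $|\varepsilon|<\varepsilon_0$ for the fibre, consistent with the strict inequality in the definition of $N_{\varepsilon_0}(\mathbf{b}\Omega)$, whereas the paper's statement has the asymmetric range $-\varepsilon_0\le\varepsilon<\varepsilon_0$, which appears to be a typo rather than an intended feature. Since the paper offers no proof, your sketch (or simply the citation to \cite{BB00}, as you yourself observe) is an entirely appropriate way to handle this lemma.
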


\begin{defn} \label{extension}
Let $\varepsilon_0$ and $\pi$ be defined as in Lemma \ref{proj}. Let further, $S$ be either a quasi-metric ball $B$ or a dyadic cube $Q$ on $\mathbf b \Omega$ with radius (or sidelength, respectively) $\ell(S)$. We define the \emph{tent} associated to $S$ as follows: 
\begin{enumerate}
    \item [(1).] If $\ell(s)<\varepsilon_0$, then 
    $$
    T(S):=\left\{\omega \in \Omega: \pi(\omega) \in S, \ \dist(\pi(\omega), \omega) \le \ell(S)\right\};
    $$
    \item [(2).] If $\ell(s) \ge \varepsilon_0 $, then $T(S):=\Omega$. In particular, we let $T(\mathbf b \Omega)=\Omega$. 
\end{enumerate}
\end{defn}

As a consequence of Proposition \ref{adjac}, we have the following result.

\begin{cor} [{\cite[Lemma 2.6]{HW20}}] \label{adjaccor}
Let $B$ and $Q$ be defined as in Proposition \ref{adjac}, then we have
$$
\emph{\textrm{Vol}}(T(B)) \simeq \emph{\textrm{Vol}} (T(Q)),
$$
where the implicit constant above is independent of the choice of $B$ and $Q$. 
\end{cor}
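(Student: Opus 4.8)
The plan is to reduce the statement to the elementary volume formula $\textrm{Vol}(T(S)) \simeq \ell(S)\,\sigma(S)$, valid for any quasi-metric ball or dyadic cube $S$ on $\mathbf b\Omega$ with $\ell(S)<\varepsilon_0$, together with two comparisons that take place purely on $\mathbf b\Omega$, namely $\ell(Q)\simeq r$ and $\sigma(Q)\simeq\sigma(B)$ (where $B=B(\xi,r)$). Granting these three facts, the corollary is immediate: $\textrm{Vol}(T(B)) \simeq \ell(B)\sigma(B) = r\,\sigma(B) \simeq \ell(Q)\,\sigma(Q) \simeq \textrm{Vol}(T(Q))$, with all constants depending only on $\Omega$; and the cases where $r$ or $\ell(Q)$ is at least $\varepsilon_0$ are trivial (then $T(Q)=\Omega$, while $r\simeq\ell(Q)\gtrsim\varepsilon_0$ and $\sigma(B)\simeq\sigma(Q)\simeq 1$ force $\textrm{Vol}(T(B))\simeq\textrm{Vol}(\Omega)$ as well).

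For the volume formula, I would use the Balogh--Bonk map of Lemma \ref{proj}: the assignment $\Phi(p,t):=p-t\,n(p)$ is a diffeomorphism of $\mathbf b\Omega\times[0,\varepsilon_0)$ onto a one-sided tubular neighborhood of $\mathbf b\Omega$ inside $\Omega$, and by parts (1) and (2) of Lemma \ref{proj} it satisfies $\pi(\Phi(p,t))=p$ and $\dist(\pi(\Phi(p,t)),\Phi(p,t))=t$. Hence, for $S$ with $\ell(S)<\varepsilon_0$, Definition \ref{extension} says precisely that $T(S)=\Phi(S\times(0,\ell(S)])$. Since $\mathbf b\Omega$ is a smooth compact hypersurface, the Jacobian of $\Phi$ is bounded above and below by positive constants on $\mathbf b\Omega\times[0,\varepsilon_0)$ (shrinking $\varepsilon_0$ if necessary), so a change of variables and Fubini's theorem give
$$
\textrm{Vol}(T(S)) \;\simeq\; \ell(S)\,\sigma(S).
$$
When $\ell(S)\ge\varepsilon_0$ one has $T(S)=\Omega$ and $\textrm{Vol}(T(S))=\textrm{Vol}(\Omega)\simeq 1\simeq\ell(S)\sigma(S)$, using $\sigma(S)\gtrsim 1$ (by the doubling of $\sigma$ and compactness of $\mathbf b\Omega$).

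For the two comparisons on $\mathbf b\Omega$: from $B\subseteq Q$ we get $\sigma(B)\le\sigma(Q)$, and Proposition \ref{adjac} gives $\ell(Q)\le\widetilde{\frakC}\,r$. Conversely, the sandwich property (Theorem \ref{HKdecomp}(6)) gives $Q\subseteq B(c(Q),\frakC\,\ell(Q))$, and since $\xi\in B\subseteq Q$ and $c(Q)\in Q$ we obtain $d(\xi,c(Q))<\frakC\,\ell(Q)\le\frakC\widetilde{\frakC}\,r$; the quasi-triangle inequality then yields $B(c(Q),\frakC\,\ell(Q))\subseteq B(\xi,C_0r)$ for a constant $C_0$ depending only on $\Omega$, whence $\sigma(Q)\le\sigma(B(\xi,C_0r))\lesssim\sigma(B)$ after finitely many applications of the doubling of $\sigma$ (Theorem \ref{McNeal}). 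This proves $\sigma(Q)\simeq\sigma(B)$. To finally pin down $\ell(Q)\gtrsim r$ I would feed in the Ahlfors-type regularity $\sigma(B(z,\rho))\simeq\rho^{\,n}$ of the quasi-metric balls on $\mathbf b\Omega$ --- which follows from the explicit shape $\tau_1(z,\rho)=\rho$, $\tau_j(z,\rho)\approx\rho^{1/2}$ of McNeal's polydiscs $D(z,\rho)$ --- and combine it with $\sigma(Q)\simeq\sigma(B)$: indeed $r^n\simeq\sigma(B)\simeq\sigma(Q)\simeq\ell(Q)^n$, so $\ell(Q)\simeq r$.

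I expect the only genuinely non-formal step to be this lower bound $\ell(Q)\gtrsim r$. The soft, purely set-theoretic properties of the adjacent dyadic systems (the containment $B\subseteq Q$ and the sandwich property) bound $\ell(Q)$ only from above; establishing the matching lower bound requires a quantitative input about the boundary geometry --- either the Ahlfors regularity of $\sigma$ on the quasi-metric balls, or equivalently a reverse doubling property for them --- both of which ultimately rest on the explicit structure of the polydiscs $D(z,\rho)$. (Alternatively, one may observe that in the Hyt\"onen--Kairema construction the cube $Q$ produced in Proposition \ref{adjac} is taken from the generation $k$ with $\del^k\simeq r$, so that $\ell(Q)\simeq r$ is built in from the start.) Everything else is the change-of-variables computation of the second paragraph together with a bounded number of applications of the doubling property.
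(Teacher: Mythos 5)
The argument is correct, and since the paper defers this proof entirely to the citation \cite[Lemma 2.6]{HW20}, yours is a genuine self-contained replacement for it. The three ingredients you isolate are exactly the right ones: the volume formula $\textrm{Vol}(T(S))\simeq\ell(S)\,\sigma(S)$ obtained from the Balogh--Bonk foliation $\Phi(p,t)=p-t\,n(p)$ of the one-sided collar (here it matters, and you use, that Lemma \ref{proj}(1)--(2) identifies $T(S)$ with the image $\Phi(S\times(0,\ell(S)])$ and that the Jacobian is bounded above and below on $\mathbf b\Omega\times[0,\varepsilon_0)$); the surface-measure comparison $\sigma(Q)\simeq\sigma(B)$ via $B\subseteq Q\subseteq B(c(Q),\frakC\ell(Q))\subseteq B(\xi,C_0 r)$ and doubling; and the scale comparison $\ell(Q)\simeq r$. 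You are right that the last one is the only step that is not purely formal: $B\subseteq Q$ and the sandwich property bound $\ell(Q)$ only from above, and the matching lower bound needs the Ahlfors $n$-regularity $\sigma(B(z,\rho))\simeq\rho^n$ of McNeal's quasi-balls (or, equivalently, the observation that Hyt\"onen--Kairema select $Q$ from the generation with $\delta^k\simeq r$). This is the standard route and, I believe, essentially what \cite{HW20} does.

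One small slip in the edge case. You write that when "$r$ or $\ell(Q)$ is at least $\varepsilon_0$" one has "$T(Q)=\Omega$"; but if only $r\ge\varepsilon_0$ (and $\ell(Q)<\varepsilon_0$, which Proposition \ref{adjac} does not forbid since it only gives $\ell(Q)\le\widetilde{\frakC}r$) it is $T(B)$, not $T(Q)$, that equals $\Omega$. The intended argument is nevertheless fine: once either scale is $\ge\varepsilon_0$, the relation $\ell(Q)\simeq r$ forces both $r$ and $\ell(Q)$ to be $\gtrsim\varepsilon_0$, and then each of $\textrm{Vol}(T(B))$ and $\textrm{Vol}(T(Q))$ is $\simeq\textrm{Vol}(\Omega)$ — either because the tent is all of $\Omega$, or because $\ell(S)\sigma(S)\gtrsim\varepsilon_0\cdot 1\simeq 1$ while trivially $\textrm{Vol}(T(S))\le\textrm{Vol}(\Omega)$. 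Restating the parenthetical this way closes the gap.
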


We are now ready to introduce the tree structure inside $\Omega$. To begin with, we may assume that the constant $\del$, which is defined in Theorem \ref{HKdecomp}, is smaller than $\varepsilon_0$. Note that this is to guarantee the extension of each dyadic cubes on $\mathbf b \Omega$ is non-trivial (namely, not equal to $\Omega$). 

For any dyadic grid $\calD$ on $\mathbf b \Omega$, let $\calT$ to be its \emph{dyadic extension} inside $\Omega$, namely, 
$$
\calT:=\bigcup_{k \ge 1} \bigcup_{j=1}^{J_k} T(Q_j^k) \cup \Omega,
$$
where $J_k \ge 0$ is the number of dyadic cubes in the $k$-th generation in $\calD$ and $Q_j^k \in \calD_k$. It is clear that for $Q_1, Q_2 \in \calT$ with $\ell(Q_1)<\ell(Q_2)$, we have either $T(Q_1) \subset T(Q_2)$ or $T(Q_1) \cap T(Q_2)=\emptyset$.

The first important property for the collection $\calT$ is that it admits a ``tree structure".

\begin{prop} [\cite{HW20}] \label{Btree}
Let $\calD$ be a dyadic grid on $\mathbf b \Omega$ and $\calT$ be the dyadic extension associated to it. Then there exists a collection of points in $\Omega$, which is denoted as $\calC$, satisfying the following properties:
\begin{enumerate}
    \item [(1).] The set $\calC$ has a one-to-one correspondence with $\calD$ (and therefore $\calT$). Moreover, we can write
    $$
    \calC=\bigcup_{k \ge 1} \bigcup_{j=1}^{J_k} \{c_j^k\},
    $$
    where there exists a unique $Q \in \calD$, such that
    \begin{enumerate}
        \item [$\bullet$] $\pi(c_j^k)=c(Q_j^k)$;
        \item [$\bullet$] $\dist\left(c(Q_j^k), c_j^k \right)=\del^k/2$;
    \end{enumerate}
    \item [(2).] $\calC$ has a tree structure, that is, we say $c_i^{k'}$ is an offspring of $c_j^k$ for some $i, j>0$ and $k'>k>0$, if $\pi(c_i^{k'}) \in Q_j^k$. 
\end{enumerate}
\end{prop}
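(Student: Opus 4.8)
The plan is to build $\calC$ by pushing the boundary center of each dyadic cube a controlled distance into $\Omega$ along the inner normal, and then to transport the tree order of $\calD$ to $\calC$ via the projection $\pi$.

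First I would, for each dyadic cube $Q_j^k \in \calD_k$ with center $c(Q_j^k) \in \mathbf b\Omega$, set
$$
c_j^k := c(Q_j^k) - \frac{\del^k}{2}\, n\!\left(c(Q_j^k)\right),
$$
where $n(\cdot)$ denotes the outer unit normal to $\mathbf b\Omega$. Since we have arranged $\del < \varepsilon_0$, we have $0 < \del^k/2 < \varepsilon_0$, so $c_j^k$ is one of the points on the fiber $\pi^{-1}(c(Q_j^k)) = \{c(Q_j^k) - \varepsilon n(c(Q_j^k)) : -\varepsilon_0 \le \varepsilon < \varepsilon_0\}$ from Lemma \ref{proj}(2); in particular $c_j^k \in \Omega \cap N_{\varepsilon_0}(\mathbf b\Omega)$, $\pi(c_j^k) = c(Q_j^k)$, and by Lemma \ref{proj}(1), $\dist\big(c(Q_j^k), c_j^k\big) = \dist(c_j^k, \mathbf b\Omega) = \del^k/2$. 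This already yields the two bullet points in (1), and we define $\calC := \bigcup_{k\ge 1}\bigcup_{j=1}^{J_k}\{c_j^k\}$.

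Next I would verify that $Q_j^k \mapsto c_j^k$ is injective, so that $\calC$ is genuinely in bijection with $\calD$. Two distinct cubes in the same generation $\calD_k$ are disjoint, hence have distinct centers and distinct images; two cubes in different generations $k \ne k'$ are sent to points whose distances to $\mathbf b\Omega$ are $\del^k/2$ and $\del^{k'}/2$, hence distinct (this is the only place where one must be slightly careful: even in the degenerate case where a cube coincides with its unique child and shares its boundary center, the interior points still differ, since the generation is recorded by the distance to $\mathbf b\Omega$). Composing with the map $Q \mapsto T(Q)$, which is a bijection of $\calD$ onto $\calT$ by the nesting/disjointness dichotomy for tents noted just before the statement together with the strict height gain $\del^{k'} < \del^k$ for $k' > k$, gives the one-to-one correspondence $\calC \leftrightarrow \calD \leftrightarrow \calT$ asserted in (1).

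Finally, for the tree structure in (2): fix $c_i^{k'}$ and write $p := \pi(c_i^{k'}) = c(Q_i^{k'})$. Since the members of $\calD_k$ partition $\mathbf b\Omega$ for each $k$ (Theorem \ref{HKdecomp}(1)--(2)), there is, for every $1 \le k \le k'$, a unique cube $Q^{(k)} \in \calD_k$ with $p \in Q^{(k)}$, and $Q^{(k')} = Q_i^{k'}$. Using Theorem \ref{HKdecomp}(4), the parent of $Q^{(k+1)}$ is a generation-$k$ cube containing $p$, hence equals $Q^{(k)}$; thus $Q^{(k')} \subset Q^{(k'-1)} \subset \cdots \subset Q^{(1)}$ is precisely the chain of ancestors of $Q_i^{k'}$ in $\calD$. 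By the definition of ``offspring'', $c_i^{k'}$ is an offspring of $c_j^k$ iff $p \in Q_j^k$ iff $Q_j^k = Q^{(k)}$, i.e. iff $Q_j^k$ lies in this chain; consequently each $c_i^{k'}$ (with $k' \ge 2$) has a unique parent, namely the point attached to $Q^{(k'-1)}$, and its full set of ancestors in $\calC$ is linearly ordered. This is exactly the statement that $\calC$ carries a tree structure, matching that of $\calD$ and, via $Q_1 \subset Q_2 \iff T(Q_1) \subset T(Q_2)$ (for $\ell(Q_1)\le \ell(Q_2)$), that of $\calT$ under inclusion of tents; if a single root is desired one adjoins $\Omega = T(\mathbf b\Omega)$, corresponding to $\calD^0 = \{\mathbf b\Omega\}$. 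I do not anticipate a serious obstacle: the only delicate points are checking $c_j^k \in \Omega$ (which is why $\del < \varepsilon_0$ was imposed) and the injectivity in the degenerate cube/child case, both dispatched above by tracking the distance to $\mathbf b\Omega$.
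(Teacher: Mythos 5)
Your construction is correct and is essentially the only natural one given the statement: pushing $c(Q_j^k)$ inward along the normal by $\del^k/2$ (valid because $\del^k/2 < \del < \varepsilon_0$) lands on the fiber $\pi^{-1}(c(Q_j^k))$ from Lemma~\ref{proj}, giving both bullet points, and the injectivity and tree-structure arguments via the generation-dependent boundary distance and the nesting of dyadic cubes are exactly what the cited reference relies on. The one delicate point you correctly handle is the degenerate case of a cube equal to its unique child, resolved by recording generation through $\dist(c_j^k,\mathbf b\Omega)=\del^k/2$.
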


Using Proposition \ref{Btree}, we have the following definition. 

\begin{defn}
\begin{enumerate}
    \item [(1).] We refer each $c_j^k$ constructed in Proposition \ref{Btree} as the \emph{center} of the tent $T(Q_j^k)$ for some $Q_j^k \in \calD_k$ and $\calC$ as a \emph{Bergman tree} associated to the dyadic grid $\calD$;
    \item [(2).] For each $Q_j^k \in \calD$, we let 
    $$
    T^{\calK}(Q_j^k):=T(Q_j^k) \backslash \left(\bigcup_{c_i^{k'} \ \textrm{is an offspring of} \ c_j^k} T(Q_i^{k'}) \right)
    $$
    be the ``\emph{kube}" associated to it (see, Figure \ref{Figure1} below). Moreover, we let
    $$
    \calT^{\calK}(\mathbf b \Omega):=\Omega \backslash \left(\bigcup_{Q \in \calD} T(Q) \right).
    $$
\end{enumerate}
\end{defn}

\medskip

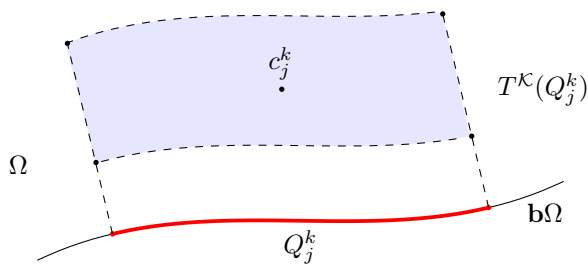
\begin{figure}[ht]
\begin{tikzpicture}[scale=5]
\draw   plot[smooth,domain=-.7:.7] (\x, {\x*(\x-0.2)*(\x+0.2)/3});
\draw (.65, .07) node [below] {${\bf b}\Omega$};
\draw (0, 0) node [below] {$Q_j^k$}; 
\draw [red, line width = 0.50mm] plot[smooth,domain=-.5:.5] (\x, {\x*(\x-0.2)*(\x+0.2)/3});
\draw (-.75, .1) node[above] {$\Omega$};
\fill (-0.544, 0.1551) circle [radius=.2pt]; 
\fill (0.455, 0.2251) circle [radius=.2pt]; 
\fill (.378, .5504) circle [radius=.2pt]; 
\fill (-.62, 0.4720) circle [radius=.2pt]; 
\draw  [dashed] plot[smooth,domain=-.53:.425] (\x, {\x*(\x-0.2)*(\x+0.2)/3+0.2});
\draw  [dashed] plot[smooth,domain=-.625:.357] (\x, {\x*(\x-0.2)*(\x+0.2)/3+0.545});
\fill [red] (.5, .035) circle [radius=.2pt];
\fill [red] (-.5, -0.035) circle [radius=.2pt];
\draw [dashed](-.5, -0.035)--(-.62, 0.4720);
\draw [dashed] (.5, 0.035)--(.378, .5504);
\fill [opacity=. 1, blue] plot[smooth,domain=-.53:.45] (\x, {\x*(\x-0.2)*(\x+0.2)/3+0.2})--(.378, .5504)--(-0.544, 0.1551);
\fill [opacity=. 1, blue] plot[smooth,domain=-.625:.357] (\x, {\x*(\x-0.2)*(\x+0.2)/3+0.545})--(.378, .5504)--(-0.544, 0.1551);
\fill  (-0.05, .35) circle [radius=.2pt];
\draw (-.053, .35) node [above] {$c_j^k$}; 
\draw (.5, .35) node [right] {$T^{\calK}(Q_j^k)$};  
\end{tikzpicture}
\caption{The kube $T^{\calK}(Q_j^k)$ associated to a dyadic cube $Q_j^k$ on $\mathbf b\Omega$. }
\label{Figure1}
\end{figure}

\medskip

We collect several useful facts about the kubes.

\begin{lem} [\cite{HW20}] 
Let $\calD$ and $\calT$ be defined as above. Then
\begin{enumerate}
    \item [(1).] $\pi\left(T(Q_j^k) \right)=\pi \left(T^{\calK}(Q_j^k) \right)=Q_j^k$, for each $j, k>0$;
    \item [(2).] The kubes are pairwise disjoint and $\bigcup\limits_{j, k} T^{\calK}(Q_j^k) \cup T^{\calK}(\mathbf b \Omega)=\Omega$;
    \item [(3).]  $\emph{\textrm{Vol}} \left(T^{\calK}(Q_j^k) \right) \simeq \emph{\textrm{Vol}} \left(T(Q_j^k) \right)$, where the implicit constant is independent of $j$ and $k$.
\end{enumerate}
\end{lem}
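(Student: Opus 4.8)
The plan is to work throughout in the boundary--normal coordinates furnished by Lemma~\ref{proj}. Set $\Phi(p,t):=p-t\,n(p)$ for $p\in\mathbf b\Omega$ and $0\le t<\varepsilon_0$; by Lemma~\ref{proj}, $\Phi$ restricts to a diffeomorphism of $\mathbf b\Omega\times[0,\varepsilon_0)$ onto the inner collar $\{\omega\in\overline\Omega:\dist(\omega,\mathbf b\Omega)<\varepsilon_0\}$ with inverse $\omega\mapsto(\pi(\omega),\dist(\omega,\mathbf b\Omega))$. Since $\del<\varepsilon_0$ and $\ell(Q_j^k)=\del^k\le\del$, Definition~\ref{extension} becomes $T(Q_j^k)=\Phi\big(Q_j^k\times(0,\del^k]\big)$. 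I would first record an elementary ``collar observation'': the set $\Phi\big(Q_j^k\times(\del^{k+1},\del^k]\big)$ is contained in $T(Q_j^k)$ but meets no offspring tent of $Q_j^k$, because an offspring $Q_i^{k'}$ has generation $k'\ge k+1$, so $\omega=\Phi(p,t)$ with $t>\del^{k+1}\ge\del^{k'}$ fails $\dist(\pi(\omega),\omega)=t\le\del^{k'}$. (One also checks that a descendant tent of a child of $Q$ sits inside that child's tent, so removing all offspring tents from $T(Q)$ is the same as removing the finitely many child tents.) Granting this, part~(1) is immediate: $\pi(T^{\calK}(Q_j^k))\subseteq\pi(T(Q_j^k))\subseteq Q_j^k$ from $T^{\calK}(Q_j^k)\subseteq T(Q_j^k)$ and Definition~\ref{extension}, while for $p\in Q_j^k$ any point $\omega:=\Phi(p,t)$ with $t\in(\del^{k+1},\del^k]$ lies in $T^{\calK}(Q_j^k)$ by the collar observation and has $\pi(\omega)=p$, giving the reverse inclusions.

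For part~(2), I would derive disjointness of the kubes from the nesting of cubes and tents together with part~(1). Let $Q\neq Q'$ in $\calD$. If they have the same sidelength they are disjoint (Theorem~\ref{HKdecomp}(2)), hence $T(Q)\cap T(Q')=\emptyset$ upon applying $\pi$ and using~(1). If $\ell(Q')<\ell(Q)$ and $T(Q)\cap T(Q')\neq\emptyset$, then $T(Q')\subset T(Q)$ by the tent nesting recorded before Proposition~\ref{Btree}; applying $\pi$ and~(1) gives $Q'\subsetneq Q$, so $Q'$ is an offspring of $Q$ and $T(Q')$ is among the tents deleted in forming $T^{\calK}(Q)$; hence $T^{\calK}(Q)\cap T^{\calK}(Q')=\emptyset$. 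Disjointness from $\calT^{\calK}(\mathbf b\Omega)=\Omega\setminus\bigcup_{Q}T(Q)$ is trivial. For the covering statement, fix $\omega\in\Omega$: if $\omega$ lies in no tent $T(Q)$ then $\omega\in\calT^{\calK}(\mathbf b\Omega)$; otherwise the family $\{Q\in\calD:\omega\in T(Q)\}$ is nonempty, contains at most one cube per generation (same-generation tents being disjoint), and has bounded generations since $\omega\in T(Q)$, $Q\in\calD_k$, forces $\del^k\ge\dist(\omega,\mathbf b\Omega)>0$; picking $Q_\ast$ in it of maximal generation, $\omega$ lies in no offspring tent of $Q_\ast$, i.e.\ $\omega\in T^{\calK}(Q_\ast)$.

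For part~(3), the bound $\textrm{Vol}(T^{\calK}(Q_j^k))\le\textrm{Vol}(T(Q_j^k))$ is trivial, and the reverse I would obtain from the collar observation, which gives $\Phi\big(Q_j^k\times(\del^{k+1},\del^k]\big)\subseteq T^{\calK}(Q_j^k)$. On the compact set $\mathbf b\Omega\times[0,\del]$ the Jacobian of $\Phi$ --- smooth and nonvanishing --- is bounded above and below by positive constants depending only on $\Omega$, so the change of variables $\omega=\Phi(p,t)$ yields $\textrm{Vol}\big(\Phi(Q\times(a,b])\big)\simeq(b-a)\,\sigma(Q)$ uniformly in $Q$ and in $0\le a<b\le\del$. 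Taking $(a,b]=(0,\del^k]$ and $(a,b]=(\del^{k+1},\del^k]$ then gives
\[
\textrm{Vol}\big(T^{\calK}(Q_j^k)\big)\ \gtrsim\ (\del^k-\del^{k+1})\,\sigma(Q_j^k)\ =\ (1-\del)\,\del^k\,\sigma(Q_j^k)\ \simeq\ (1-\del)\,\textrm{Vol}\big(T(Q_j^k)\big),
\]
with constants independent of $j$ and $k$, which is~(3).

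I expect the only step that is more than bookkeeping to be the uniform two--sided Jacobian bound for $\Phi$ used in~(3) --- equivalently the uniform comparison $\textrm{Vol}(T(Q))\simeq\ell(Q)\,\sigma(Q)$, which can also be routed through Corollary~\ref{adjaccor} --- and it must be extracted from the smoothness of the Balogh--Bonk projection in Lemma~\ref{proj} and the compactness of $\mathbf b\Omega$. Everything else reduces to the nesting of cubes and of tents and to part~(1); indeed part~(1) is precisely what forces tents of the same generation to be disjoint, which is the mechanism behind both the disjointness and the covering in~(2).
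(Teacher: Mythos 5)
Your argument is correct, and it is the natural route to this lemma: parametrize the inner collar by the normal flow $\Phi(p,t)=p-t\,n(p)$, observe that $T(Q_j^k)=\Phi\bigl(Q_j^k\times(0,\delta^k]\bigr)$, and that the annular slab $\Phi\bigl(Q_j^k\times(\delta^{k+1},\delta^k]\bigr)$ survives the removal of all offspring tents because any offspring cube has sidelength $\le\delta^{k+1}$. That single ``collar observation'' drives all three parts: it yields surjectivity of $\pi$ onto $Q_j^k$ for~(1), and it produces a set of volume comparable to $T(Q_j^k)$ inside the kube for~(3), once one has the uniform two-sided Jacobian bound for $\Phi$ on $\mathbf b\Omega\times[0,\delta]$ (which indeed follows from smoothness of $\pi$ and hence of $n=\nabla\rho$ in Lemma~\ref{proj}, together with $J_\Phi=1$ at $t=0$ and compactness of $\mathbf b\Omega$). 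The disjointness and covering arguments in~(2) correctly reduce to the nesting of dyadic cubes via the tent-nesting remark preceding Proposition~\ref{Btree} and your part~(1), together with the elementary bound $\delta^k\ge\dist(\omega,\mathbf b\Omega)>0$ that caps the generations in which a given $\omega$ can lie. The paper itself only cites \cite{HW20} for this lemma and does not reproduce a proof, but your argument is precisely the standard normal-collar computation that the surrounding framework (Definition~\ref{extension}, Proposition~\ref{Btree}, Corollary~\ref{adjaccor}) is built on, so it matches the intended proof in all essentials. One small point worth stating explicitly if you write this up: the identification of the Bergman-tree offspring relation in Proposition~\ref{Btree}(2) with dyadic descent $Q'\subsetneq Q$ uses that $\pi(c_i^{k'})=c(Q_i^{k'})\in Q_i^{k'}$ together with the nesting property in Theorem~\ref{HKdecomp}(2); you use this tacitly when you assert that the tents removed in forming $T^{\calK}(Q)$ are exactly those of proper descendant cubes.
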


The second important property for the collection $\calT$ is that it forms a Muckenhoupt basis for the domain $\Omega$.  Let us recall some basic setup from \cite{CMP11}. Recall that by a \emph{basis} $\calB$ of $\Omega$, we mean by a collection of open sets contained in $\Omega$ and the maximal operator associated to $\calB$ is given by 
$$
\calM_{\calB} f(z):= \sup_{z \in \B \in \calB} \langle f \rangle_B
$$
if $z \in \Omega$ and $\calM_{\calB} f(z)=0$ otherwise, where for any $B \in \calB$, we denote
$$
\langle f \rangle_B:=\frac{1}{\textrm{Vol}(B)} \int_B |f(z)|dV(z). 
$$

\begin{defn}
A basis $\calB$ is a \emph{Muckenhoupt basis} if for each $p$, $1<p<\infty$, and for every $\omega=\omega(z) \ge 0$ on $\Omega$ with
$$
[\omega]_{p, \calB}:=\langle \omega \rangle_B \left \langle \omega^{1-p'} \right\rangle^{p-1}_B<\infty,
$$
one has
\begin{equation} \label{maximal}
\int_\Omega \left[\calM_{\calB}f(z)\right]^p \omega(z)dV(z) \lesssim \int_\Omega |f(z)|^p\omega(z)dV(z),
\end{equation} 
where the implicit constant is independent of $f$ and depends only on $[\omega]_{p, \calB}$, the defining function $\rho$ and the dimension $n$.
\end{defn}

\begin{thm} \label{Mbasis} 
$\calT$ is a Muckenhoupt basis associated to $\Omega$.
\end{thm}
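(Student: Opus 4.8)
The plan is to show that $\calT$ satisfies the sufficient conditions for being a Muckenhoupt basis established in \cite{CMP11}; the cleanest route is to verify that $\calT$ enjoys a \emph{Besicovitch-type covering property} together with the \emph{doubling property with respect to dilations of its elements}, which in that framework forces the weighted strong $(p,p)$ bound \eqref{maximal} for all $A_{p,\calT}$ weights. Concretely, I would first reduce matters to a local statement near $\mathbf b\Omega$: the tents $T(Q)$ with $\ell(Q)\ge\varepsilon_0$ are all equal to $\Omega$ and contribute only a harmless bounded piece, and the single ``exceptional'' region $\calT^{\calK}(\mathbf b\Omega)$ is a fixed compact subset of $\Omega$ where $dV$ is comparable to Lebesgue measure on a Euclidean ball, so the maximal operator there is dominated by the ordinary Hardy--Littlewood maximal operator. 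Thus the real content is the family $\{T(Q): Q\in\calD^t,\ t=1,\dots,K_0\}$ coming from the finitely many adjacent grids of Proposition \ref{adjac}.

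Next I would establish the two structural facts that make this family well-behaved. \textbf{Nesting/tree structure:} within a single grid the tents are either nested or disjoint (already noted after Proposition \ref{Btree}), so each grid $\calD^t$ gives a tree-indexed family; a maximal operator built from a tree-indexed family of sets automatically satisfies a Calder\'on--Zygmund / stopping-time decomposition and hence the weighted bound, exactly as for dyadic cubes in $\R^n$. \textbf{Doubling:} using the covering properties of the polydiscs $D(q,r)$ — namely $D(q,2\delta)\subset cD(q,\delta)$ and the comparability $D(q_2,r)\subset CD(q_1,r)$ when they overlap — together with Corollary \ref{adjaccor} and the volume estimate $\mathrm{Vol}(T(Q))\simeq \ell(Q)^{n+1}$ coming from $\tau_1\approx r$, $\tau_j\approx r^{1/2}$, I would show that if $T(Q)$ and $T(Q')$ are tents from (possibly different) adjacent grids with $\ell(Q)\simeq\ell(Q')$ and $T(Q)\cap T(Q')\ne\emptyset$, then $\mathrm{Vol}(T(Q))\simeq\mathrm{Vol}(T(Q'))$ and one is contained in a fixed dilate of the other. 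This is precisely the ``comparable tents overlap controllably'' condition needed to run a Besicovitch-type selection argument across the $K_0$ grids.

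With these in hand I would argue as follows: given $f$ and the weight $\omega$ with $[\omega]_{p,\calT}<\infty$, for each $t$ let $\calM_{\calB}^t$ be the maximal operator restricted to tents from $\calD^t$. Each $\calM_{\calB}^t$ is a ``dyadic-type'' maximal operator attached to a tree of sets with the doubling property and the nesting property, so by the standard Christ--Sawyer / Cruz-Uribe--Martell--P\'erez argument it satisfies $\int_\Omega (\calM_{\calB}^t f)^p\omega\lesssim \int_\Omega |f|^p\omega$ with constant depending only on $p$ and $[\omega]_{p,\calD^t}\le[\omega]_{p,\calT}$. Since every tent in $\calT$ is one of the tents indexed by some $\calD^t$ (this is where Proposition \ref{adjac} is used — every quasi-metric ball, and hence every tent of a ball, sits inside a comparable tent of some adjacent grid, and the tents of the original grid $\calD$ are themselves among the adjacent grids), we have $\calM_{\calB}f\lesssim \sum_{t=1}^{K_0}\calM_{\calB}^t f$ pointwise plus the harmless compact piece, and summing the finitely many estimates gives \eqref{maximal}.

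The main obstacle I anticipate is the \emph{uniformity} in the doubling/covering constants across scales and across the $K_0$ adjacent grids: the holomorphic coordinate systems and polydiscs $D(q,r)$ are defined only locally near $\mathbf b\Omega$, the constants $C,c$ in their covering properties must be shown independent of $q\in U\cap\mathbf b\Omega$ and $r$, and one must check that passing from polydiscs to tents via the Balogh--Bonk projection $\pi$ of Lemma \ref{proj} preserves these comparabilities with constants depending only on $\rho$ and $n$. Handling the two boundary cases — the global tents ($\ell(Q)\ge\varepsilon_0$) and the central region $\calT^{\calK}(\mathbf b\Omega)$ — is routine once one observes $dV$ is locally comparable to Euclidean measure there, so the ordinary $A_p$ theory for the Euclidean Hardy--Littlewood maximal function applies. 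Everything else is a transcription of the dyadic $A_p$ machinery of \cite{CMP11} to the tree $\calT$.
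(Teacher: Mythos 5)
The paper gives no proof of Theorem~\ref{Mbasis} beyond the one-line remark that it is ``similar to \cite[Proposition 3.13]{GHK20}'', so there is no explicit argument in the paper to compare against. Your sketch is essentially the standard way such results are proved and almost certainly matches the omitted argument in spirit: the tents of a single dyadic extension $\calT$ form a tree (either nested or disjoint, with $\Omega=T(\mathbf b\Omega)$ as the root and with uniformly bounded parent--to--child volume ratio coming from property~(5) of Theorem~\ref{HKdecomp} and the relation $\mathrm{Vol}(T(Q))\simeq\ell(Q)\,\sigma(Q)$), and the dyadic Calder\'on--Zygmund / stopping-time machinery of \cite{CMP11} then transfers verbatim to give the weighted strong $(p,p)$ bound for $[\omega]_{p,\calT}<\infty$. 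This is the right route.

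However, you introduce two complications that the statement does not actually require. First, $\calT$ as defined in Section~\ref{Sec02} is the dyadic extension of a \emph{single} grid $\calD$, not the union over the $K_0$ adjacent grids; Theorem~\ref{Mbasis} is therefore a single-tree statement and the Besicovitch-type covering argument across the $K_0$ grids, along with the cross-grid comparability of tents, is not needed. (Where the paper later needs several grids, e.g.\ in Theorem~\ref{thm002}, it simply applies the single-grid result to each $\calD^{i_0}$ separately.) Second, $\calT^{\calK}(\mathbf b\Omega)$ is a kube, not a basis element; since $\Omega$ itself belongs to $\calT$, the only tent containing a point of $\calT^{\calK}(\mathbf b\Omega)$ is $\Omega$, so $\calM_\calT$ there reduces to the global average of $f$ and no comparison with the Euclidean Hardy--Littlewood maximal function is called for. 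Stripped of these two digressions, what remains is precisely the tree/stopping-time argument, which is correct and is the substance of the proof the paper refers to.
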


\begin{proof}
The proof is similar to \cite[Proposition 3.13]{GHK20} and hence we omit the detail here. 
\end{proof}

\medskip

Finally, we study the relation between Bergman kernel and the tents and we have the following important estimate. The Boundary behavior of the Bergman kernel on a bounded, smooth, strictly  pseudoconvex domain is well understood. In fact, an asymptotic expansion for the Bergman kernel was established by Fefferman  \cite{Fefferman}. The lemma below only require an upper bound estimate for the kernel and the precise expansion is not necessary (see. e.g., \cite{McN03}).
\begin{lem}\label{thm4.2}
	Let $\Omega$ be a smooth, bounded, strictly pseudoconvex domain in $\mathbb C^n$ with a defining function $\rho$. For each $p_o\in \mathbf b\Omega$, there exists a neighborhood $U$ of $p_o$, holomorphic coordinates $(\zeta_1,\dots,\zeta_n)$ and a constant $C>0$, such that for $p,q\in U\cap \Omega$,
	\begin{align}
	|K_{\Omega}(p;\bar q)|\leq C\left(|\rho(p)|+|\rho(q)|+|p_n-q_n|+\sum_{j=1}^{n-1}|p_k-q_k|^2\right)^{-n-1}.
	\end{align}
	Here $p=(p_1,\dots,p_n)$ is in $\zeta$-coordinates.
\end{lem}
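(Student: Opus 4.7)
The plan is to deduce this as a standard consequence of the local asymptotic expansion of the Bergman kernel on a smooth, bounded, strictly pseudoconvex domain (Fefferman, Boutet de Monvel--Sj\"ostrand), together with the standard comparison between the Levi-polynomial ``distance'' and the real quasi-distance on the right. First I would fix holomorphic coordinates $(\zeta_1,\dots,\zeta_n)$ centered at $p_o$ so that the defining function takes the strictly pseudoconvex normal form
$$\rho(\zeta) = -\re \zeta_n + \sum_{j=1}^{n-1}|\zeta_j|^2 + O(|\zeta|^3)$$
on a small neighborhood $U$ of $p_o$; in such coordinates $\partial/\partial \zeta_n$ is the complex normal direction, the other coordinates are complex tangential, and McNeal's polydisc $D(q,r)$ from Section~\ref{Sec02} reduces to the product of a single disc of radius $\simeq r$ in $\zeta_n$ and $n-1$ discs of radius $\simeq r^{1/2}$ in the tangential variables, so $\mathrm{Vol}(D(q,r))\simeq r^{n+1}$.

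Next I would invoke Fefferman's expansion to write, for $p,q\in U\cap\Omega$,
$$K_{\Omega}(p,\bar q) \;=\; \frac{\phi(p,\bar q)}{\Psi(p,\bar q)^{n+1}} \;+\; O\!\left(\log \tfrac{1}{|\Psi(p,\bar q)|}\right),$$
where $\phi$ is smooth and bounded on $U\times U$ and $\Psi$ is an almost holomorphic extension of $-2\rho$ off the diagonal (the Fefferman/Levi polynomial). The lemma then reduces to establishing the comparison
$$|\Psi(p,\bar q)| \;\gtrsim\; |\rho(p)|+|\rho(q)|+|p_n-q_n|+\sum_{k=1}^{n-1}|p_k-q_k|^2 \;=:\; M(p,q).$$
Granted this, the leading term contributes $\lesssim M(p,q)^{-(n+1)}$, the logarithmic remainder is absorbed when $M(p,q)$ is small, and when $M(p,q)$ stays bounded below the estimate is trivial by continuity of $K_\Omega$ on compacts away from the boundary diagonal (shrinking $U$ if necessary).

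The main technical point is the Levi-polynomial lower bound $|\Psi(p,\bar q)|\gtrsim M(p,q)$, which I would obtain by Taylor expanding $\Psi$ around the diagonal and using the strict positivity of the Levi form of $\rho$ at $p_o$: the real part of $\Psi$ controls $|\rho(p)|+|\rho(q)|+\sum_{k=1}^{n-1}|p_k-q_k|^2$, its imaginary part controls $|\im(p_n-q_n)|$, and $|\re(p_n-q_n)|$ is absorbed via the normal form of $\rho$ together with the tangential quadratic terms. As the authors remark just before the lemma, the full precision of Fefferman's expansion is actually unnecessary here: an alternative route, followed by McNeal, is to combine Cauchy estimates for the holomorphic function $K_\Omega(\cdot,\bar q)$ on the polydisc $D(p,c\,M(p,q))\subset\Omega$ with the diagonal bound $K_\Omega(q,\bar q)\simeq \mathrm{Vol}(D(q,|\rho(q)|))^{-1}$ and the Cauchy--Schwarz inequality $|K_\Omega(p,\bar q)|^2\le K_\Omega(p,\bar p)\,K_\Omega(q,\bar q)$. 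Either route yields the advertised bound uniformly for $p,q\in U\cap\Omega$.
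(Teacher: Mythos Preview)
Your proposal is correct and in fact considerably more detailed than what the paper does: the paper gives no proof of this lemma at all, merely citing McNeal \cite{McN03} and remarking that only an upper bound (not the full Fefferman expansion) is needed. Your outline of both the Fefferman--Boutet de Monvel--Sj\"ostrand route and the McNeal polydisc/Cauchy-estimate route accurately captures the two standard arguments; the paper is content to defer entirely to the latter reference.
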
 
Here the holomorphic coordinates $(\zeta_1,\dots,\zeta_n)$ can be chosen as the coordinates for the polydisc $D(p,r)$. Thus, there exists a constant $r_o$ such that the polydisc 
$D(\pi(p),r)$ contains points $p, q$ and satisfies 
\[\textrm{Vol}(D(\pi(p),r))\approx\left(|\rho(p)|+|\rho(q)|+|p_n-q_n|+\sum_{j=1}^{n-1}|p_k-q_k|^2\right)^{n+1}.\]
\begin{lem} \label{lem001}
For any $z, w \in \Omega$, there exists a dyadic cube $Q \in D^i$ for some $i \in \{1, \dots, K_0\}$ (see, Proposition \ref{adjac}), such that $z, \xi \in T(Q)$ and
$$
\left|K(z, \xi) \right| \lesssim \frac{1}{\textrm{Vol}(T(Q))},
$$
where the implicit constant in the above estimate only depends on the domain $\Omega$ and the dimension $n$. Here we recall that $K(\cdot, \cdot)$ is the Bergman kernel associated to $\Omega$. 
\end{lem}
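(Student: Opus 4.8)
The plan is to combine the pointwise kernel bound of Lemma~\ref{thm4.2} with the comparability between McNeal polydiscs and tents of adjacent dyadic cubes. First I would fix $z,\xi\in\Omega$. If at least one of them is at distance $\geq\varepsilon_0$ (or some fixed comparable threshold) from $\mathbf b\Omega$, then both points lie in a fixed compact subset of $\Omega$, on which $|K(z,\xi)|$ is bounded by a constant depending only on $\Omega$; in that regime one simply takes $Q=\mathbf b\Omega$, so $T(Q)=\Omega$ and $\mathrm{Vol}(T(Q))=\mathrm{Vol}(\Omega)$ is a fixed positive constant, and the inequality is trivial. So the substance is the boundary regime, where both $z,\xi$ are close to $\mathbf b\Omega$ and close to each other (if they are close to the boundary but far apart, the kernel is again under control by the same polydisc-volume estimate once we use a large enough $r$).

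Next, in the boundary regime I would invoke Lemma~\ref{thm4.2}: pick $p_o\in\mathbf b\Omega$ near $z$, work in the associated holomorphic coordinates $(\zeta_1,\dots,\zeta_n)$, and write
\[
|K(z,\xi)|\ \lesssim\ \left(|\rho(z)|+|\rho(\xi)|+|z_n-\xi_n|+\sum_{j=1}^{n-1}|z_j-\xi_j|^2\right)^{-n-1}.
\]
Call the quantity in parentheses $r_0=r_0(z,\xi)$. By the remark following Lemma~\ref{thm4.2}, there is a polydisc $D(\pi(z),r)$ (with $r\approx r_0$) containing both $z$ and $\xi$ with $\mathrm{Vol}(D(\pi(z),r))\approx r_0^{\,n+1}$, so that $|K(z,\xi)|\lesssim 1/\mathrm{Vol}(D(\pi(z),r))$. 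Then I would pass from the polydisc to a dyadic tent: the trace $B:=D(\pi(z),r)\cap\mathbf b\Omega$ is (comparable to) a quasi-metric ball on $\mathbf b\Omega$ of radius $\approx r$, so Proposition~\ref{adjac} gives an adjacent dyadic cube $Q\in\calD^i$ with $B\subseteq Q$ and $\ell(Q)\le\widetilde{\frakC}r$; enlarging $r$ by the fixed covering constants if needed, we also get that $z,\xi\in T(Q)$ (this uses property~(1) in Lemma~\ref{proj}, i.e. $\pi(z),\pi(\xi)\in B\subseteq Q$, together with $\mathrm{dist}(\pi(\,\cdot\,),\,\cdot\,)\lesssim r_0$ for both points, and the definition of $T(Q)$ with a harmless dilation of $r$). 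Finally, Corollary~\ref{adjaccor} together with the volume comparison $\mathrm{Vol}(T(Q))\approx\mathrm{Vol}(T(B))\approx \mathrm{Vol}(D(\pi(z),r))\approx r_0^{\,n+1}$ (the middle comparison coming from the product structure $\tau_1\approx r,\ \tau_j\approx r^{1/2}$ of the polydisc, whose tent has volume $\approx r\cdot r^{n-1}=r^n$ in complex dimension — one must be careful about real vs.\ complex dimensions and the extra $\rho$-direction, which is exactly why the exponent is $n+1$) yields $|K(z,\xi)|\lesssim 1/\mathrm{Vol}(T(Q))$, with all implicit constants depending only on $\Omega$ and $n$.

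The main obstacle I expect is the bookkeeping in the passage from the analytic polydisc-volume estimate to the combinatorial dyadic tent: one must verify that a single adjacent cube $Q$ can be chosen so that \emph{both} $z$ and $\xi$ lie in $T(Q)$ (not just their projections lie in $Q$) while keeping $\mathrm{Vol}(T(Q))\lesssim r_0^{\,n+1}$, and this requires using the covering properties of the polydiscs $D(q,r)$ — specifically that $D(\pi(z),r)$ and $D(\pi(\xi),r')$ with $r,r'\approx r_0$ are comparable — to ensure the radius of the quasi-metric ball fed into Proposition~\ref{adjac} is still $\approx r_0$. A secondary technical point is confirming that $\mathrm{dist}(\pi(\xi),\xi)\lesssim r_0$: since $|\rho(\xi)|=\mathrm{dist}(\xi,\mathbf b\Omega)\le r_0$ and the defining function is the signed distance, this is immediate, but it must be stated. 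Once these comparabilities are in place the conclusion follows by chaining the estimates above; alternatively, since the statement only asserts ``$\lesssim$'', one can absorb most of this into a single application of Corollary~\ref{adjaccor} after fixing the polydisc.
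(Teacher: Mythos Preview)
Your proposal is correct and follows essentially the same route as the paper: the paper's proof is a two-line sketch that reduces everything to the comparability $C^{-1}T(B(\pi(p),r_o))\subseteq D(\pi(p),r_o)\cap\Omega\subseteq C\,T(B(\pi(p),r_o))$ (citing a triangle-inequality argument from the literature) and then implicitly appeals to Proposition~\ref{adjac} and Corollary~\ref{adjaccor}, while you spell out the same chain of comparisons in detail, including the interior/boundary dichotomy and the verification that both $z$ and $\xi$ land in $T(Q)$. One minor slip: in your parenthetical volume computation you write $r\cdot r^{n-1}=r^n$, but the McNeal polydisc in $\C^n$ has volume $\approx \tau_1^2\prod_{j=2}^n\tau_j^2\approx r^2\cdot r^{n-1}=r^{n+1}$, which is the exponent you ultimately use.
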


\begin{proof}
The desired estimate is a consequence of the fact that the tent $T(B(\pi(p),r_o))$ and the set $D(\pi(p), r_o)\cap\Omega$ are comparable, i.e. there is a constant $C>0$ so that \[C^{-1}T(B(\pi(p),r_o))\subseteq D(\pi(p), r_o)\cap\Omega\subseteq CT(B(\pi(p),r_o)).\] 
This containment follows from an triangle inequality argument (see, e.g., \cite[Lemma 2.7]{J10}).
\end{proof}

\subsection{Sub-mean value property for kubes}

The second half of this section is devoted to show that the kubes constructed above possess a sub-mean value property. We prove this by showing that show each kube is comparable to a Kobayashi ball near the boundary. This generalizes such an observation for the unit ball in $\C^n$, which is due to Arcozzi, Rochberg and Sawyer \cite{ARS06} (see, also \cite{RTW17}). 

Let us briefly recall the definition and the main properties of the Kobayashi ball and we refer the reader \cite{Aba89, JP93, K98} for a more sophisticated treatment for this subject. 

Let $k_{\D}$ be the Poincar\'e distance on the unit disc $\D \subset \C$. Let further, the \emph{Lempert function} $\del_\Omega: \Omega \times \Omega \to \R^+$ of $\Omega$ to be defined by:
\begin{eqnarray*}
\del_\Omega(z, \xi):%
&=& \inf \big\{k_{\D}(\zeta, \eta) \big | \ \textrm{there exists a holomorphic function} \ \phi: \D \to X \ \\
&& \quad \quad \quad \quad \quad \quad \quad \textrm{with} \ \phi(\zeta)=z \ \textrm{and} \ \phi(\eta)=\xi \big \},
\end{eqnarray*}
for all $z, \xi \in \Omega$. The \emph{Kobayashi distance} $k_\Omega: \Omega \times \Omega \to \R^+$ is the largest pseudodistance on $\Omega$ bounded above by $\del_\Omega$. Note that since $\Omega$ is bounded, $k_\Omega$ is a true distance (see, e.g., \cite[Theorem 2.3.14]{Aba89}). For any $z \in \Omega$ and $r \in (0, 1)$, the \emph{Kobayashi ball} centered at $z$ with radius $\frac{1}{2} \log \frac{1+r}{1-r}$ is given by
\begin{equation} \label{Kball}
B_\Omega(z, r):= \left\{z' \in \Omega: \tanh k_\Omega(z, z')<r \right\}. \end{equation} 

We are ready to state the main result in the second half of this section.

\begin{thm} \label{KobayashiKube}
There exists $0<\alpha, \beta<1$, such that for any kube $T^{\calK}(Q_j^k), j, k \ge 1$, one has
$$
B_\Omega (c_j^k, \alpha) \subseteq T^{\calK}(Q^k_j) \subseteq B_\Omega (c_j^k, \beta),
$$
where we recall that $c_j^k$ is the center of $T(Q_j^k)$. 
\end{thm}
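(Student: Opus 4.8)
The plan is to compare both sets to the polydisc-based structure, using the fact (already recorded in Lemma~\ref{lem001} and its proof) that a tent $T(Q_j^k)$ is comparable, up to absolute constants, to $D(c(Q_j^k),\delta^k)\cap\Omega$, and then to invoke the known comparison between Kobayashi balls and such polydiscs on strictly pseudoconvex domains. The key external input I would cite is the estimate, due to Abate--Saracco and going back to the work of Balogh--Bonk (see \cite{Aba89, BB00}), that for a point $z\in\Omega$ near $\mathbf b\Omega$ with $\dist(z,\mathbf b\Omega)=\varepsilon$, the Kobayashi ball $B_\Omega(z,r)$ is comparable to the polydisc $D(\pi(z),c_r\varepsilon)\cap\Omega$, where $c_r$ depends only on $r$ (and $c_r\to 0$ as $r\to 0$, $c_r\to\infty$ as $r\to 1$). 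Concretely: there are constants so that $D(\pi(z),a_r\varepsilon)\cap\Omega\subseteq B_\Omega(z,r)\subseteq D(\pi(z),b_r\varepsilon)\cap\Omega$. Since $c_j^k$ has $\dist(c_j^k,\mathbf b\Omega)=\delta^k/2$ and $\pi(c_j^k)=c(Q_j^k)$, the Kobayashi ball $B_\Omega(c_j^k,r)$ is sandwiched between $D(c(Q_j^k),a_r\delta^k/2)\cap\Omega$ and $D(c(Q_j^k),b_r\delta^k/2)\cap\Omega$.

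First I would set up the dictionary: using the covering property (2) of the polydiscs $D(q,r)$ (namely $D(q,2\delta)\subset cD(q,\delta)$) and its converse inclusion, I can freely rescale the radius of a polydisc at the cost of absolute constants, so $D(c(Q_j^k),t\delta^k)\cap\Omega$ for any fixed $t>0$ is comparable to $D(c(Q_j^k),\delta^k)\cap\Omega$, hence to $T(Q_j^k)$, and by part~(3) of the last Lemma also to the kube $T^{\calK}(Q_j^k)$ in volume. For the inclusions I need actual set containments, not just volume comparability, so I would instead argue directly from Definition~\ref{extension}: an $\omega$ lies in $T(Q_j^k)$ iff $\pi(\omega)\in Q_j^k$ and $\dist(\pi(\omega),\omega)\le\delta^k$, and one checks via the sandwich property of Theorem~\ref{HKdecomp}(6) and a triangle-inequality argument (as in \cite[Lemma 2.7]{J10}, cited in the proof of Lemma~\ref{lem001}) that
$$
D(c(Q_j^k),c_1\delta^k)\cap\Omega \;\subseteq\; T(Q_j^k) \;\subseteq\; D(c(Q_j^k),c_2\delta^k)\cap\Omega
$$
for absolute constants $0<c_1<c_2$.

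Then the upper inclusion $T^{\calK}(Q_j^k)\subseteq T(Q_j^k)\subseteq D(c(Q_j^k),c_2\delta^k)\cap\Omega\subseteq B_\Omega(c_j^k,\beta)$ follows by choosing $\beta<1$ close enough to $1$ that $b_\beta\cdot(\delta^k/2)\ge c_2\delta^k$, i.e. $b_\beta\ge 2c_2$; this is possible since $b_r\to\infty$ as $r\to 1$, and crucially $c_2$ is independent of $j,k$. For the lower inclusion I would argue that if $\alpha$ is small enough then $B_\Omega(c_j^k,\alpha)\subseteq D(c(Q_j^k),c_1\delta^k)\cap\Omega$ and moreover $B_\Omega(c_j^k,\alpha)$ avoids all the sub-tents $T(Q_i^{k'})$ that were removed to form the kube. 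The first part uses $a_\alpha\cdot(\delta^k/2)\le c_1\delta^k$ for $\alpha$ small. The second is the delicate point: a removed child tent $T(Q_i^{k'})$ with $k'>k$ sits over $Q_i^{k'}\subset Q_j^k$ but at Euclidean depth $\le\delta^{k'}\le\delta^{k+1}$ from the boundary, whereas a point of $B_\Omega(c_j^k,\alpha)$ stays (for $\alpha$ small, uniformly in $k$) within Kobayashi distance $\le\frac12\log\frac{1+\alpha}{1-\alpha}$ of $c_j^k$, and since $k_\Omega$ controls $|\log\dist(\cdot,\mathbf b\Omega)|$ up to a bounded additive error near the boundary (another standard strictly-pseudoconvex estimate), all such points have $\dist(\cdot,\mathbf b\Omega)\ge\delta^k/2\cdot e^{-2\cdot\frac12\log\frac{1+\alpha}{1-\alpha}}=\frac{\delta^k}{2}\cdot\frac{1-\alpha}{1+\alpha}$, which for $\delta$ small exceeds $\delta^{k+1}$; hence they lie outside every removed child tent. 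Combining, $B_\Omega(c_j^k,\alpha)\subseteq T(Q_j^k)\setminus\bigcup_i T(Q_i^{k'})=T^{\calK}(Q_j^k)$.

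The main obstacle is the lower inclusion, specifically verifying that $B_\Omega(c_j^k,\alpha)$ does not protrude past the ``inner face'' of the kube into the removed child tents, and doing so with $\alpha$ uniform in $j,k$. This requires the two-sided comparison of the Kobayashi distance with $\frac12\log\frac{1}{\dist(z,\mathbf b\Omega)}$ near the boundary with constants independent of the base point, which holds on bounded strictly pseudoconvex domains but should be quoted precisely (e.g. from \cite{BB00} or \cite{Aba89}); and it forces the smallness assumption on $\delta$ already flagged in Section~2 (``we may also assume $\delta$ is sufficiently small''). One should also handle the degenerate case of kubes not near $\mathbf b\Omega$ (finitely many generations $k$), where the polydisc machinery does not apply but the statement is trivial because each such kube is a fixed relatively compact piece of $\Omega$, on which $k_\Omega$ is a genuine metric and the containments hold for suitable $\alpha,\beta$ by compactness; taking the min/max over these finitely many cases with the near-boundary constants yields the final uniform $\alpha,\beta$.
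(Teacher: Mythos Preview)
Your overall strategy---reduce both the Kobayashi balls and the kubes to polydisc-type sets and compare---is exactly what the paper does. The paper centers its comparison polydiscs at the interior point $c_j^k$ (writing $D_r(c_j^k)$ for a polydisc with radii $\sim\dist(c_j^k,\mathbf b\Omega)$ in the normal direction and $\sim\dist(c_j^k,\mathbf b\Omega)^{1/2}$ in tangential directions), whereas you center them at the boundary point $\pi(c_j^k)=c(Q_j^k)$. This difference is not cosmetic: it breaks your upper-inclusion argument.

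Concretely, your claimed sandwich $D(\pi(z),a_r\varepsilon)\cap\Omega\subseteq B_\Omega(z,r)$ is false for every $r<1$ and every $a_r>0$. The set $D(\pi(z),a_r\varepsilon)\cap\Omega$ contains points on the inward normal segment at $\pi(z)$ with distance to $\mathbf b\Omega$ arbitrarily small, while any Kobayashi ball $B_\Omega(z,r)$ with $r<1$ stays at depth $\gtrsim\varepsilon\cdot\frac{1-r}{1+r}$ (this is precisely the $|\log\dist(\cdot,\mathbf b\Omega)|$ estimate you yourself invoke later). Consequently your chain
\[
T^{\calK}(Q_j^k)\subseteq T(Q_j^k)\subseteq D(c(Q_j^k),c_2\delta^k)\cap\Omega\subseteq B_\Omega(c_j^k,\beta)
\]
fails at the last step: the full tent $T(Q_j^k)$ reaches all the way to $\mathbf b\Omega$ and cannot sit inside any Kobayashi ball of radius $<1$.

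The fix is easy, and it is what the paper does implicitly: do not route through the full tent. Points of the \emph{kube} $T^{\calK}(Q_j^k)$ have depth in $[\delta^{k+1},\delta^k]$, so the depth ratio to $c_j^k$ is bounded in $[2\delta,2]$; combined with the tangential control $d(\pi(\zeta),\pi(c_j^k))\lesssim\delta^k$, this bounds $k_\Omega(\zeta,c_j^k)$ uniformly, giving $T^{\calK}(Q_j^k)\subseteq B_\Omega(c_j^k,\beta)$ directly. Equivalently, use the interior-centered polydisc $D_\beta(c_j^k)$ as the paper does. Your lower-inclusion argument (projecting to check $\pi(B_\Omega(c_j^k,\alpha))\subset Q_j^k$, then checking the depth stays above $\delta^{k+1}$) is correct and matches the paper's two-step verification.
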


\begin{proof}
Let $z\in \Omega$ be a point near the boundary. Let $(z_1,\dots,z_n)$ be a holomorphic coordinate system centered at $z$ so that $\re z_1$-axis is  the outward normal direction and $z_2,\dots, z_n$ are complex tangent directions at point $z$. 
By the well-known asymptotic behavior of the Kobayashi metric on strictly  pseudoconvex domain  (see, e.g., \cite{Aladro, Graham, Sibony}), $B_\Omega(z, r)$  is comparable to a polydisc $D_r(z)$ of the form:
\[D_r(z)=\{w\in\Omega:|w_1-z_1|<c_1t,|w_2-z_2|<c_2\sqrt{t},\dots,|w_n-z_n|<c_2\sqrt{t}\}.\]
Here $c_1$ and $c_2$ are constants that only depend on $r$ and $t=\dist(z, \mathbf b\Omega)$. Hence, it suffices to show that there exists $0<\alpha,\beta<1$ such that
$$
D_\alpha (c_j^k) \subseteq T^{\calK}(Q^k_j) \subseteq D_\beta (c_j^k).
$$
We first show that 
$$
D_\alpha (c_j^k) \subseteq T^{\calK}(Q^k_j) .
$$ 
Since $Q^k_j$ is comparable to the quasi-metric ball $B(\pi(c^k_j), \delta^k) \subset \mathbf b\Omega$, we therefore prove the first inclusion with $Q_j^k$ replaced by $B(\pi(c^k_j), \delta^k)$.

Suppose $\zeta\in D_\alpha(c_j^k)$. Let $\zeta^\prime$ be the projection of $\zeta$ on the hypersurface  $\{w:\rho(w)=\rho(c^k_j)\}$. 
Then we have $d(\zeta,c^k_j)\lesssim \delta^{k}/2$ and 
\[d(\zeta^\prime,c^k_j)\lesssim d(\zeta^\prime,\zeta)+d(c^k_j,\zeta)\lesssim \delta^{k}.\]
Since $\delta$ is chosen to be sufficiently small and the gradient $\nabla \rho$ is continuous near $\mathbf b\Omega$ with $\left| \nabla \rho(\xi) \right|=1$ when $b \in \mathbf b\Omega$, we may assume that $|\nabla\rho(x)-\nabla \rho(y)|<\frac{1}{10}$ for all  $y\in \mathbf b\Omega$ and $x\in B(y,\delta)$.
Then, from $\pi(\zeta)=\zeta^\prime -\rho(c^k_j)\nabla\rho(\zeta)$ and $\pi(c^k_j)=c^k_j -\rho(c^k_j)\nabla\rho(c^k_j)$ , we obtain
\begin{align*}
d(\pi(\zeta),\pi(c^k_j))&\lesssim d\left(\pi(\zeta),\zeta^\prime-\rho (c^k_j)\nabla \rho(c^k_j)\right)+d\left(\pi(c^k_j),\zeta^\prime-\rho (c^k_j)\nabla \rho(c^k_j)\right)\nonumber\\&\lesssim -\epsilon \rho(c^k_j)+d\left(\pi(c^k_j),\zeta^\prime-\rho (c^k_j)\nabla \rho(c^k_j)\right).
\end{align*} 
Note that there exists a constant $C$ such that  $D_\alpha (c_j^k) \subseteq CD(\pi(c^k_j),\delta^k)$. Thus, $D_\alpha (c_j^k) -\rho (c^k_j)\nabla \rho(c^k_j)\subseteq CD(\pi(c^k_j),\delta^k)$ (see, Figure \ref{Figure2}). 

\bigskip

\begin{figure}[ht]
\begin{tikzpicture}[scale=9.2]
\draw   plot[smooth,domain=-.5:.5] (\x, {-\x*(\x-0.2)*(\x+0.2)+0.4});
\fill (0, 0.4) circle [radius=.2pt];
\draw (0, 0.398) node [above] {\tiny{$\pi(c_j^k)$}}; 
\fill (0.01, 0.15) circle [radius=.2pt];
\draw (0.01, 0.15) node [right] {\tiny{$c_j^k$}}; 
\draw [dashed] (0.5, 0.42)--(-0.5, 0.38); 
\draw [opacity=. 3, blue, line width = 0.30mm](0.2, 0.458)--(0.204, 0.358)--(-0.2, 0.343)--(-0.204, 0.442)--(0.2, 0.458); 
\draw [opacity=. 3, green, line width = 0.30mm] (0.21, 0.208)--(0.214, 0.108)--(-0.19, 0.093)--(-0.194, 0.192)--(0.21, 0.208); 
\draw (0.2, 0.21) node [above] {\tiny{$D_\alpha(c_j^k)$}}; 
\draw [opacity=.3, red, line width=0.30mm]   (-0.335, 0.7614) -- (0.305, 0.787) -- (0.335, 0.0386) -- (-0.305, 0.013) -- (-0.335, 0.7614);
\fill [opacity=. 3, blue](0.2, 0.458)--(0.204, 0.358)--(-0.2, 0.343)--(-0.204, 0.442)--(0.2, 0.458);
\draw (0.15, 0.46) node [above] {\tiny{$D_\alpha(c_j^k)-\rho(c_j^k) \nabla \rho(c_k^k)$}}; 
\fill [opacity=. 3, green](0.21, 0.208)--(0.214, 0.108)--(-0.19, 0.093)--(-0.194, 0.192)--(0.21, 0.208);  
\fill [opacity=.04, red]   (-0.335, 0.7614) -- (0.305, 0.787) -- (0.335, 0.0386) -- (-0.305, 0.013) -- (-0.335, 0.7614);  
\draw [->, dashed, thick] (0.01, 0.15) -- (0, 0.4); 
\draw (0.32, 0.72) node [right] {\tiny{$CD(\pi(c_j^k), \del^k)$}};
\draw (0.005, 0.275) node [left] {\tiny{$-\rho(c_j^k) \nabla \rho(c_k^k)$}}; 
\draw (0.44, 0.32) node [below] {${\bf b}\Omega$}; 
\draw (-.42, 0.08) node [below] {$\Omega$}; 
\end{tikzpicture}
\caption{$D_\alpha(c^k_j)-\rho(c^k_j)\triangledown\rho(c^k_j)\subseteq CD(\pi(c^k_j),\delta^k)$}
\label{Figure2}
\end{figure}

\medskip

Since the sets $D_\alpha (c_j^k) -\rho (c^k_j)\nabla \rho(c^k_j)$ and $CD(\pi(c^k_j),\delta^k)$ have the common center $\pi(c^k_j)$, the containment still holds after applying a dilation to both sets. Therefore, there exists a constant $C_2$ such that
\[C^{-1}C_2^{-1}D_\alpha (c_j^k) -\rho (c^k_j)\nabla \rho(c^k_j)\subseteq C^{-1}D(\pi(c^k_j), \delta^{k})\subseteq D(\pi(c^k_j),
\delta^{k}).\]
We may choose $\alpha_1$ to be smaller so that 
\[D_{\alpha_1} (c_j^k) \subseteq C^{-1}C_2^{-1}D_\alpha (c_j^k) .\]
Then we obtain $D_{\alpha_1} (c_j^k)-\rho (c^k_j)\nabla \rho(c^k_j)\subseteq D(\pi(c^k_j),\delta^{k})$ which also implies that 
\[d(\pi(c^k_j),\zeta^\prime-\rho (c^k_j)\nabla \rho(c^k_j))\approx \delta^{k}.\]
Hence $d(\pi(\zeta),\pi(c^k_j))\approx \delta^{k}$ and $\pi(D_{\alpha_1}(c^k_j))\subseteq B(\pi(c^k_j),\delta^{k})$ for some $\alpha_1$.

Now we are left to show that the polydisc $D_\alpha(c_j^k)$ can be chosen to also have the correct distance to the boundary $b\Omega$, i.e.
\begin{align*}
\{-\rho(z):z\in D_\alpha(c_j^k)\}\subseteq \{-\rho(z):z\in T^{\calK}(Q^k_j)\}.
\end{align*}
 By  perhaps choosing smaller constant $\alpha$, we may assume that the distance between sets $D_\alpha(c_j^k)$ and the boundary of $T^{\calK}(Q^k_j)$ is comparable to $\delta^k$. Then there exists constant $c$ such that the distance between \[\{z\pm c\triangledown \rho(c^k_j):z\in D_\alpha(c_j^k), \rho(z)=\rho(c^k_j)\}\subseteq T^{\calK}(Q^k_j).\] By perhaps choosing a smaller $\alpha$, we have $$D_\alpha(c_j^k)\subseteq\{z\pm c\triangledown \rho(c^k_j):z\in D_\alpha(c_j^k), \rho(z)=\rho(c^k_j)\},$$ which implies
 $$
 D_\alpha (c_j^k) \subseteq T^{\calK}(Q^k_j) .
 $$ 

Suppose $\zeta\in T^{\calK}(Q^k_j)$. Then $\text{dist}(\zeta,b\Omega)\in [\delta^{k+1},\delta^k]$ and $\pi(\zeta)\in B(\pi(c^k_j),c\delta^k)$ for some $c$. Recall that 
$\zeta^\prime$ is the projection of $\zeta$ on the hypersurface  $\{w:\rho(w)=\rho(c^k_j)\}$. 
Then we have $d(\zeta,c^k_j)\lesssim \delta^{k}/2$ and 
\[d(c^k_j,\zeta)\lesssim d(\zeta^\prime,\zeta)+d(\zeta^\prime,c^k_j)\lesssim \delta^k+d(\zeta^\prime,c^k_j).\]
By $\pi(\zeta)\in B(\pi(c^k_j),c\delta^k)$, we have \[\zeta^\prime=\pi(\zeta)+\rho(c_j^k)\triangledown\rho(\zeta)=\pi(\zeta)+\rho(c^k_j)\triangledown(\rho(\zeta)-\rho(c^k_j))+\rho(c^k_j)\triangledown\rho(c^k_j).\]
Since $|\nabla \rho(\zeta)-\nabla \rho(c^k_j)|<\frac{1}{10}$, 
$$
\zeta^\prime\in B(\pi(c^k_j),c_1\delta^k)+\rho(c^k_j)\triangledown\rho(c^k_j)
$$
for some slightly larger constant $c_1$. Note that $\dist(c^k_j,\pi(c^k_j)=\delta^k/2$, it follows that $D(c^k_j,\delta^k)$ and $D(\pi(c^k_j),\delta^k)$ are of comparable size. Hence
$\zeta^\prime\in B(c^k_j,c_2\delta^k)$ for some constant $c_2$. As a result
$d(c^k_j,\zeta)\lesssim \delta^k$ and
$\zeta\in D_\beta(c^k_j)$ for some constant $\beta$.

\end{proof}

\begin{cor} \label{sub-mean} 
The following estimate holds: for each dyadic cube $Q_j^k$ and any non-negative plurisubharmonic function $f$, 
$$
f(z) \lesssim \frac{1}{\emph{\textrm{Vol}}\left(T^{\calK}(Q_j^k)\right)} \int_{B_\Omega(c_j^k, \widetilde{\beta})} f(w) dV(w), \quad z \in T^{\calK}(Q_j^k),
$$
where $\beta>0$ is defined in Theorem \ref{KobayashiKube} and $\widetilde{\beta}=\frac{1+\beta}{2}$.  
\end{cor}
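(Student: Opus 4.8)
The plan is to combine the submean value property of plurisubharmonic functions on Kobayashi balls with the two-sided containment from Theorem \ref{KobayashiKube} and the volume comparison $\textrm{Vol}(T^{\calK}(Q_j^k)) \simeq \textrm{Vol}(T(Q_j^k))$. First I would record the standard fact that, for a bounded domain, the Kobayashi balls enjoy a submean value inequality with a uniform constant as long as one stays away from the boundary of the ambient ball by a fixed fraction of the radius; more precisely, if $f \ge 0$ is plurisubharmonic on $\Omega$ and $0 < r < 1$, then for $z \in B_\Omega(c,r)$ one has
$$
f(z) \lesssim \frac{1}{\textrm{Vol}\left(B_\Omega\left(c, \tfrac{1+r}{2}\right)\right)} \int_{B_\Omega\left(c, \frac{1+r}{2}\right)} f(w)\, dV(w),
$$
with the implicit constant depending only on $r$ and $\Omega$. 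This is where the passage from radius $\beta$ to $\widetilde{\beta} = \tfrac{1+\beta}{2}$ enters: we need the slightly larger ball so that $B_\Omega(c_j^k,\beta)$ sits compactly inside $B_\Omega(c_j^k,\widetilde{\beta})$ in the Kobayashi metric, giving room for a genuine mean value estimate via the asymptotic polydisc description of $B_\Omega$ used in the proof of Theorem \ref{KobayashiKube}.

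Next, given $z \in T^{\calK}(Q_j^k)$, the upper containment in Theorem \ref{KobayashiKube} gives $z \in B_\Omega(c_j^k, \beta)$, so the displayed submean value inequality applies with $c = c_j^k$ and $r = \beta$, producing
$$
f(z) \lesssim \frac{1}{\textrm{Vol}\left(B_\Omega(c_j^k, \widetilde{\beta})\right)} \int_{B_\Omega(c_j^k, \widetilde{\beta})} f(w)\, dV(w).
$$
It then remains to replace the denominator $\textrm{Vol}(B_\Omega(c_j^k,\widetilde{\beta}))$ by $\textrm{Vol}(T^{\calK}(Q_j^k))$. For this I would invoke the lower containment $B_\Omega(c_j^k,\alpha) \subseteq T^{\calK}(Q_j^k)$ from Theorem \ref{KobayashiKube}, which yields $\textrm{Vol}(T^{\calK}(Q_j^k)) \ge \textrm{Vol}(B_\Omega(c_j^k,\alpha))$, together with the standard comparability $\textrm{Vol}(B_\Omega(c_j^k,\alpha)) \simeq \textrm{Vol}(B_\Omega(c_j^k,\widetilde{\beta}))$ for Kobayashi balls with comparable (fixed) radii centered at the same point — again a consequence of the polydisc asymptotics, since both volumes are comparable to $t^{n+1}$ with $t = \textrm{dist}(c_j^k, \mathbf b\Omega)$. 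Combining these, $\textrm{Vol}(B_\Omega(c_j^k,\widetilde{\beta})) \simeq \textrm{Vol}(T^{\calK}(Q_j^k))$, and substituting into the inequality above gives exactly the claim.

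The main obstacle is establishing the uniform constant in the submean value step: one must check that the implicit constant in the polydisc comparison $B_\Omega(c_j^k, r) \simeq D_r(c_j^k)$, and hence in the resulting mean value inequality, does not degenerate as $k \to \infty$, i.e. as $c_j^k$ approaches $\mathbf b\Omega$. This is precisely the content of the asymptotic behavior of the Kobayashi metric on strictly pseudoconvex domains already used in Theorem \ref{KobayashiKube} (the constants $c_1, c_2$ there depend only on the radius, not on the base point near the boundary), so the uniformity is inherited; I would make this dependence explicit and also note that for the finitely many kubes lying entirely away from $\mathbf b\Omega$ — where $T^{\calK}(Q_j^k)$ has volume bounded below by a fixed positive constant — the estimate is trivial by compactness. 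A minor technical point to be careful about is that $B_\Omega(c_j^k,\widetilde\beta)$ must itself be relatively compact in $\Omega$ so that $f$ is integrable over it; this follows since $\widetilde\beta < 1$ and $\Omega$ is bounded, and the polydisc picture confirms the ball does not reach $\mathbf b\Omega$.
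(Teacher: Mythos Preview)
Your proposal is correct and follows essentially the same approach as the paper: the paper's proof simply cites Theorem \ref{KobayashiKube} together with the submean value property of plurisubharmonic functions on Kobayashi balls from \cite[Corollary 2.7 and Corollary 2.8]{AS11} and leaves the details to the reader, and what you have written is precisely those details spelled out. Your handling of the volume replacement via the lower containment $B_\Omega(c_j^k,\alpha)\subseteq T^{\calK}(Q_j^k)$ and the polydisc-based comparability of Kobayashi-ball volumes is exactly the intended mechanism.
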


\begin{proof}
The above estimate is a consequence of Theorem \ref{KobayashiKube} and the sub-mean value property of plurisubharmonic functions on Kobayashi balls on a bounded strictly  pseudoconvex domain (see, e.g., \cite[Corollary 2.7 and Corollary 2.8]{AS11}). Therefore, we would like to leave the detail to the interested reader. 
\end{proof}

\bigskip
\section{Dyadic Carleson embedding on strictly  pseudoconvex domains}

The goal of this section is to study the dyadic Carleson embedding on the domain $\Omega$ via the dyadic decomposition constructed in the previous section. This part generalizes such a characterization in the upper half plane and unit ball (see, \cite{HLSW19}), moreover, it also can be viewed as a dyadic  counterpart of \cite[Theorem 3.4]{AS11}.

Let $\Omega$ be a bounded strictly   pseudoconvex domain and  $\calD^t, t=1, \dots, K_0$ be the adjacent grids associated to it (see, Proposition \ref{adjac}). 

\begin{defn}
Let $\lambda>0$. We say a finite Borel measure $\mu$ defined on $\Omega$ is a \emph{dyadic $\lambda$-Carleson measure} if
$$
\sup_{\substack{Q \in \calD^i \\ i \in \{1, \dots, K_0\}}} \frac{\mu(T(Q))}{\textrm{Vol}(T(Q))^\lambda}<\infty,
$$
and a \emph{vanishing dyadic $\lambda$-Carleson measure} if
$$
\lim_{c\left(T(Q)\right) \to \mathbf b\Omega} \frac{\mu(T(Q))}{\textrm{Vol}(T(Q))^\lambda}=0. 
$$
We denote $\calC\calM_d(\lambda)$ ($\calV\calC\calM_d(\lambda)$, respectively) to be the collection of all dyadic $\lambda$-Carleson measures (vanishing dyadic $\lambda$-Carleson measures) on $\Omega$. 

\end{defn}

\begin{lem}\label{lem3.2}
For $s,t>0$ and $\mu$ be any positive Borel measure on $\Omega$, the following statements are equivalent:
	\begin{enumerate}
	 \item [(1).] For any $f\in A^t$, 
	\[\left(\int_\Omega |f|^{s} d\mu\right)^{\frac{1}{s}} \lesssim \left(\int_\Omega |f|^t dV \right)^{\frac{1}{t}}.\]
	\item [(2).] For any $l>0$, and any $f\in A^{tl}$, 
	\[\left(\int_\Omega |f|^{sl} d\mu\right)^{\frac{1}{sl}} \lesssim \left(\int_\Omega |f|^{tl} dV \right)^{\frac{1}{tl}}.\]
	\end{enumerate} 
\end{lem}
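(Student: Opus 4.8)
The plan is to prove the equivalence of (1) and (2) by exploiting the homogeneity coming from holomorphy: if $f \in A^{t}$ then $f^{m} \in A^{t/m}$ for any positive integer $m$ (since $|f^{m}| = |f|^{m}$ and $f^{m}$ is still holomorphic), and more generally one can substitute powers of $f$ to rescale the exponents. The direction $(2) \Rightarrow (1)$ is immediate by taking $l = 1$, so the real content is $(1) \Rightarrow (2)$.

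\medskip

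\textbf{Step 1: Reduce to the case $l$ rational, then to $l$ a positive integer.} First I would observe that (1) with $f$ replaced by $f^{m}$ (for $m \in \N$) gives
\[
\left(\int_\Omega |f|^{sm}\, d\mu\right)^{\frac{1}{sm}} \lesssim \left(\int_\Omega |f|^{tm}\, dV\right)^{\frac{1}{tm}},
\]
which is exactly (2) for the integer value $l = m$. For general $l > 0$ one approximates: the inequality in (2) is scale-invariant (replacing $f$ by $cf$ changes both sides by the same factor), so it suffices to prove it for a dense/exhausting family of exponents. The key point is that the constant in (1) does not depend on $f$, hence neither does the constant obtained after the substitution $f \mapsto f^{m}$; this uniformity is what lets us pass freely between exponents.

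\medskip

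\textbf{Step 2: Handle non-integer $l$ by a limiting / interpolation argument.} For $l = p/q$ rational, apply Step 1 with $f$ replaced by $f^{p}$ and with the pair of exponents $(s', t') = (s/q \cdot \text{(something)}, \dots)$ — more cleanly, write $l = p/q$ and use that (1) holds verbatim with $(s,t)$ replaced by $(sq, tq)$ is \emph{not} automatic, so instead I would directly verify: (1) applied to $g = f^{p}$ reads $\left(\int |f|^{sp} d\mu\right)^{1/(sp)} \lesssim \left(\int |f|^{tp} dV\right)^{1/(tp)}$, which is (2) at $l = p$. To get fractional $l$ one then notes the family of valid inequalities is closed under $l \mapsto l/m$: given (2) at some $l_0$, replace $f$ by $f^{1/m}$ — but this is illegitimate since $f^{1/m}$ need not be holomorphic or single-valued. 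The honest route is therefore to prove (2) for all integers first, and then use a density/monotone-convergence argument in $l$: fix $f \in A^{tl}$, and note $\|f\|_{L^{sl}(\mu)}$ and $\|f\|_{L^{tl}(dV)}$ depend continuously (indeed log-convexly, by Hölder/Lyapunov) on the exponent, with the inequality known on the dense set of rationals of the form handled above; continuity then extends it to all $l > 0$. I should be careful that $\mu$ is finite and $V(\Omega) < \infty$, so all the $L^{r}$ quasi-norms are genuinely monotone and finite on the relevant functions.

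\medskip

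\textbf{Main obstacle.} The delicate point is making the exponent-rescaling fully rigorous for \emph{all} real $l > 0$, not just integers or rationals of a special form, since one cannot take fractional holomorphic powers. I expect the cleanest fix is: (i) prove $(1)\Rightarrow$ (2) for every positive \emph{integer} $l$ by the substitution $f \mapsto f^{l}$; (ii) observe this already gives, for arbitrary $l>0$, the inequality after raising $f$ to a large integer power $N$ with $N \geq l$ clearing denominators is not needed — instead use Lyapunov's inequality to interpolate $L^{sl}(\mu)$ between $L^{s}(\mu)$ and $L^{sN}(\mu)$ and likewise on the $dV$ side, combining the endpoint estimates (the case $l=1$, which is (1), and the case $l = N$, just proved) to recover the intermediate exponent. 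The bookkeeping of interpolation exponents on both the $\mu$ and $dV$ sides simultaneously — ensuring the same $l$ appears — is the part that needs care, but it is routine once set up. I would leave the verification that the interpolation exponents match to a short explicit computation.
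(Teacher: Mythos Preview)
Your substitution $f \mapsto f^{m}$ correctly establishes (2) for every positive integer $l=m$, but the extension to arbitrary $l>0$ has a real gap. The Lyapunov step you propose gives
\[
\|f\|_{L^{sl}(\mu)} \le \|f\|_{L^{s}(\mu)}^{\theta}\,\|f\|_{L^{sN}(\mu)}^{1-\theta} \le C\,\|f\|_{L^{t}(dV)}^{\theta}\,\|f\|_{L^{tN}(dV)}^{1-\theta},
\]
and to finish you would need $\|f\|_{L^{t}(dV)}^{\theta}\,\|f\|_{L^{tN}(dV)}^{1-\theta} \lesssim \|f\|_{L^{tl}(dV)}$. But Lyapunov's inequality on the $dV$ side gives exactly the \emph{reverse} of this, and there is no reverse H\"older available for general holomorphic $f$. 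The continuity argument fails for the same reason: knowing the inequality at integer $l$ together with continuity in $l$ does not determine the intermediate values without a convexity input, and the convexity you have points the wrong way. More fundamentally, for $l<1$ the power trick is simply unavailable (holomorphic roots need not exist), and nothing in your outline ever produces (2) at any $l<1$; the set of $l$ you can reach from $l=1$ by $f\mapsto f^{m}$ is precisely the positive integers.

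The paper's route, via Cima--Mercer \cite{CM95}, is different in kind: one shows that (1) is equivalent to a geometric Carleson-type condition on $\mu$ depending only on the ratio $s/t$. Since $sl/(tl)=s/t$, the same geometric condition characterises (2) for every $l>0$, and the equivalence is immediate. The $l$-independence is thus built into the Carleson characterisation rather than obtained by manipulating exponents; your approach would need an ingredient of this sort to handle non-integer~$l$.
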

\begin{proof}
The proof follows from a slight modification of \cite[Theorem 4]{CM95} and we omit the details here. 
\end{proof}

Given $q \ge 1$, $u \in H(\Omega)$ and $\varphi: \Omega \to \Omega$ a holomorphic mapping, we define the measure $\mu_{u, \varphi, q}$ by
$\mu_{u, \varphi, q}(E):=(|u|^qV)\left(\varphi^{-1}(E)\right)$ for any $E \subseteq \Omega$, that is, for any measurable function $f$, we have
$$
\int_\Omega f d\mu_{u, \varphi, q}=\int_\Omega f \circ \varphi(z) |u(z)|^qdV(z). 
$$

We have the following Carleson type result. 

\begin{thm} \label{bounded}
Let $q \ge p \ge 1$, $u$ and $\varphi$ be defined as above. Then the following statements are equivalent:
\begin{enumerate}
    \item [(1).] $\mu_{u, \varphi, q} \in \calC\calM_d \left(\frac{q}{p} \right)$; 
    \item [(2).] $W_{u, \varphi}: A^p \to A^q$ is bounded;
    \item [(3).] The \emph{$\frac{q}{p}$-Berezin transform} of $\mu_{u, \varphi, p}$ is bounded, namely,
    $$
    \sup_{z \in \Omega} B_{\frac{q}{p}}\mu_{u, \varphi, q}(z):=\sup_{z \in \Omega} \int_\Omega \frac{|K(\xi, z)|^{\frac{2q}{p}}}{K(z, z)^{\frac{q}{p}}} d\mu_{u, \varphi, q}(\xi)<\infty. 
    $$
\end{enumerate}
\end{thm}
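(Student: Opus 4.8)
The plan is to prove the chain of equivalences $(2)\Leftrightarrow(1)\Rightarrow(3)\Rightarrow(2)$, using the dyadic Carleson embedding machinery from Section~3 together with the pointwise kernel estimate of Lemma~\ref{lem001}. The key observation linking the operator norm to the measure is the change-of-variables identity
$$
\|W_{u,\varphi}f\|_q^q=\int_\Omega|u(z)|^q|f(\varphi(z))|^q\,dV(z)=\int_\Omega|f|^q\,d\mu_{u,\varphi,q},
$$
so that boundedness of $W_{u,\varphi}:A^p\to A^q$ is \emph{exactly} the embedding inequality $\left(\int_\Omega|f|^q\,d\mu_{u,\varphi,q}\right)^{1/q}\lesssim\|f\|_p$ for $f\in A^p$. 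Since $q\ge p\ge1$, Lemma~\ref{lem3.2} (with $s=q$, $t=p$, and the scaling parameter $l$) lets me reduce the exponents to a convenient normalization — most naturally to make one of them equal to $1$ or $2$ — so it suffices to prove the embedding in that normalized form, and in particular the condition $(1)$, being scale-free in the exponent $q/p$, is insensitive to this reduction.

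For $(1)\Rightarrow(2)$ I would run the standard dyadic argument: decompose $\Omega=\bigsqcup_{j,k}T^{\calK}(Q_j^k)\sqcup\calT^{\calK}(\mathbf b\Omega)$ into kubes, and on each kube control the integrand $|f|^q$ by its average via the sub-mean value property (Corollary~\ref{sub-mean}), which replaces $|f(z)|^q$ for $z\in T^{\calK}(Q_j^k)$ by $\frac{1}{\mathrm{Vol}(T^{\calK}(Q_j^k))}\int_{B_\Omega(c_j^k,\widetilde\beta)}|f|^q\,dV$. Summing against $\mu_{u,\varphi,q}$ and invoking the dyadic $\frac{q}{p}$-Carleson condition $\mu_{u,\varphi,q}(T(Q))\le C\,\mathrm{Vol}(T(Q))^{q/p}$ together with $\mathrm{Vol}(T^{\calK}(Q))\simeq\mathrm{Vol}(T(Q))$, this reduces to a discrete Carleson embedding of the form $\sum_{Q}\mu(T(Q))\langle g\rangle_{T(Q)}\lesssim\|g\|_{L^{p/q}}^{\ast}$-type estimate for $g=|f|^q$; here I would use that $\calT$ is a Muckenhoupt basis (Theorem~\ref{Mbasis}) so that the associated maximal operator $\calM_{\calT}$ is strong $(p/q)'$-bounded — equivalently I can invoke the Carleson embedding theorem proved earlier in Section~3 directly. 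The converse $(2)\Rightarrow(1)$ is the routine test-function step: plug in the normalized reproducing kernels $k_z=K(\cdot,z)/K(z,z)^{1/2}$ (or appropriate powers), whose $A^p$-norms are uniformly bounded, and use the lower bound $|K(\xi,z)|\simeq K(z,z)$ for $\xi,z$ in a common tent $T(Q)$ (the estimate~\eqref{eq2020}) to bound $\mu_{u,\varphi,q}(T(Q))$ from below by a constant times $\mathrm{Vol}(T(Q))^{q/p}$, taking $z=c_j^k$ the kube center.

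For the equivalence with $(3)$: $(2)\Rightarrow(3)$ is immediate by testing the embedding against the kernels $k_z$ and noting $B_{q/p}\mu_{u,\varphi,q}(z)=\int_\Omega|k_z|^{2q/p}\,d\mu_{u,\varphi,q}\le\|W_{u,\varphi}\|^q\|k_z\|_p^q\lesssim\|W_{u,\varphi}\|^q$ uniformly in $z$. For $(3)\Rightarrow(1)$, fix an adjacent cube $Q\in\calD^i$ with center $c(Q)$ and pick a point $z_Q\in\Omega$ over $c(Q)$ at distance $\simeq\ell(Q)$ from the boundary; then for $\xi\in T(Q)$ the off-diagonal estimate~\eqref{eq2020} gives $|K(\xi,z_Q)|\simeq K(z_Q,z_Q)\simeq\mathrm{Vol}(T(Q))^{-1}$ (the last comparison from Lemma~\ref{lem001} and the standard $K(z,z)\simeq\mathrm{dist}(z,\mathbf b\Omega)^{-n-1}$), so
$$
B_{q/p}\mu_{u,\varphi,q}(z_Q)\ge\int_{T(Q)}\frac{|K(\xi,z_Q)|^{2q/p}}{K(z_Q,z_Q)^{q/p}}\,d\mu_{u,\varphi,q}(\xi)\gtrsim\frac{\mu_{u,\varphi,q}(T(Q))}{\mathrm{Vol}(T(Q))^{q/p}},
$$
and taking the supremum over $Q$ yields $(1)$. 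The main obstacle I anticipate is bookkeeping the exponent reductions in Lemma~\ref{lem3.2} so that the sub-mean value step and the Muckenhoupt maximal estimate are applied with compatible exponents (one needs $p/q\le1$ handled correctly, which is why the discrete Carleson embedding rather than a naive Hölder argument is the right tool), and making sure the kube-center points $z_Q$ and $c_j^k$ can simultaneously serve as Berezin test points and embedding test points — this is exactly where~\eqref{eq2020} and the comparability $\mathrm{Vol}(T^{\calK}(Q))\simeq\mathrm{Vol}(T(Q))$ are indispensable, and everything near the interior (away from $\mathbf b\Omega$) is trivial by compactness.
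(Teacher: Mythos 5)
Your overall architecture matches the paper's, but two of your steps have genuine gaps. In $(1)\Rightarrow(2)$, after the sub-mean value step and the Carleson condition you are left with a sum of the form $\sum_Q \textrm{Vol}(T(Q))^{q/p-1}\int_{B_\Omega(c_j^k,\widetilde{\beta})}|f|^q\,dV$, and you propose to close this with a maximal-operator bound via the Muckenhoupt basis property of $\calT$. But for $p<q$ the function $|f|^q$ lies only in $L^{p/q}$ with $p/q<1$, and neither $\calM_{\calT}$ nor the fractional maximal operator $\calM_{\calT,\alpha}$ of Lemma~\ref{fractional} has a strong bound at an exponent below $1$ (Lemma~\ref{fractional} explicitly requires $p>1$), so this route stalls; your fallback to ``the Carleson embedding theorem proved earlier in Section 3'' is circular, since Theorem~\ref{bounded} \emph{is} that theorem. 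The paper instead closes the estimate with the elementary pointwise bound $|f(w)|^{q-p}\lesssim \textrm{Vol}(T(Q))^{-(q-p)/p}\|f\|_p^{q-p}$ for $w$ in the Kobayashi ball (a second application of sub-mean value), which cancels the factor $\textrm{Vol}(T(Q))^{q/p-1}$ and reduces the sum to $\|f\|_p^{q-p}\sum_Q\int_{B_\Omega(c_j^k,\widetilde{\beta})}|f|^p\,dV\lesssim\|f\|_p^q$ by finite overlap, with no maximal function at all.

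Your $(2)\Rightarrow(3)$ as written is also incorrect: plugging $k_z$ into the embedding $(\int|f|^q\,d\mu)^{1/q}\lesssim\|f\|_p$ gives $\int|k_z|^q\,d\mu\lesssim\|k_z\|_p^q$, not the quantity $\int|k_z|^{2q/p}\,d\mu$ that equals $B_{q/p}\mu_{u,\varphi,q}(z)$, and moreover $\|k_z\|_p$ is \emph{not} uniformly bounded for $p>2$ (it grows like $K(z,z)^{1/2-1/p}$). Testing $K(\cdot,z)^{2/p}/K(z,z)^{1/p}$ instead is not available either, since fractional powers of $K(\cdot,z)$ need not be holomorphic on a strictly pseudoconvex domain, as the paper flags in the remark following the theorem. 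You do announce in your opening paragraph that Lemma~\ref{lem3.2} should be used to renormalize the exponents, which is exactly the right idea, but you then fail to execute it: one must first apply Lemma~\ref{lem3.2} with $t=p$, $s=q$, $l=2/p$ to replace the $A^p\to L^q(d\mu)$ embedding by the equivalent $A^2\to L^{2q/p}(d\mu)$ embedding, and only then test $k_z$, where $\|k_z\|_2=1$ genuinely gives $\int|k_z|^{2q/p}\,d\mu\lesssim 1$. Your $(3)\Rightarrow(1)$ step, testing at a point over $c(Q)$ at boundary distance $\simeq\ell(Q)$ and using $|K(\xi,z)|\simeq K(z,z)\simeq\textrm{Vol}(T(Q))^{-1}$ on $T(Q)$, is correct and coincides with the paper's.
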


\begin{proof}
$(1) \Longrightarrow (2)$. We let $\calD$ be any dyadic grid on $\mathbf b\Omega$ and recall that 
$$
\bigcup_{Q \in \calD} T^{\calK}(Q) \cup T^{\calK}(\mathbf b\Omega)=\Omega. 
$$
This suggests for any $f \in A^p$, we can bound
\begin{eqnarray*}
\|W_{u, \varphi} f\|_q^q%
&=& \int_\Omega |u(z)|^q |f(\varphi(z))|^qdV(z)= \int_\Omega |f(z)|^q d\mu_{u, \varphi, q}(z) \\
&\le& \sum_{Q \in \calD} \int_{T^{\calK}(Q)}|f(z)|^q d\mu_{u, \varphi, q}(z).
\end{eqnarray*}
Here we only deal with the case when $Q \neq \mathbf b\Omega$, while the case when $Q=\mathbf b\Omega$ can be estimated in a similar way and hence we would like to leave the detail to the interested reader. 

By applying Corollary \ref{sub-mean} twice, (i) and the fact that $q \ge p$, we have
\begin{eqnarray*}
&&\sum_{Q \in \calD} \int_{T^{\calK}(Q)}|f(z)|^q d\mu_{u, \varphi, q}(z) \\
&& \quad \quad \quad \lesssim \sum_{Q \in \calD} \int_{T^{\calK}(Q)} \left[\frac{1}{\textrm{Vol}\left(T^{\calK}(Q_j^k)\right)} \int_{B_\Omega(c_j^k, \widetilde{\beta})} |f(w)|^q dV(w) \right] d\mu_{u, \varphi, q}(z) \\
&& \quad \quad \quad  = \sum_{Q \in \calD} \frac{\mu_{u, \varphi, q}(T^{\calK}(Q)}{\textrm{Vol}(T^{\calK}(Q_j^k))} \int_{B_\Omega(c_j^k, \widetilde{\beta})} |f(w)|^qdV(w) \\
&& \quad \quad \quad  \lesssim \sum_{Q \in \calD} \int_{B_\Omega(c_j^k, 2\beta)} \left[\textrm{Vol}\left(T^{\calK}(Q_j^k)\right)\right]^{\frac{q-p}{p}} |f(w)|^{q-p} |f(w)|^pdV(w) \\
&&\quad \quad \quad  \lesssim \|f\|_p^{q-p} \sum_{Q \in \calD} \int_{B_\Omega (c_j^k, \widetilde{\beta})} |f(w)|^pdV(w) \\
&& \quad \quad \quad  \lesssim \|f\|_p^p,
\end{eqnarray*}
where in the last estimate, we have used the fact that the set $\{B_\Omega(c_j^k, \widetilde{\beta})\}_{j, k \ge 1}$ has finite overlap, which follows from the facts that the set $T^{\calK}(Q_j^k)$ are pairwise disjoint for different $(j, k)$ and for each $c_j^k$, it has only finitely many children $c_{j'}^{k+1}$ in the tree structure, since so does $c(Q_j^k)$ (see Theorem \ref{HKdecomp}). 

\medskip

$(2) \Longrightarrow (3)$. Since $W_{u, \varphi}: A^p \to A^{q}$,  it holds that
$$
\left(\int_\Omega |f|^q d\mu_{u,\varphi,q}\right)^{\frac{1}{q}} \lesssim \left(\int_\Omega |f|^p dV \right)^{\frac{1}{p}} \quad \forall f\in A^p(\Omega).
$$
By Lemma \ref{lem3.2} with $t=p$ and $s=q$, the above inequality is equivalent to 
$$
\left(\int_\Omega |f|^{\frac{2q}{p}} d\mu_{u,\varphi,q}\right)^{\frac{p}{2q}} \lesssim \left(\int_\Omega |f|^{2} dV \right)^{\frac{1}{2}} \quad \forall f\in A^{2}(\Omega).
$$
Now for any $z \in \Omega$, we consider the normalized kernel 
$
k_z(\xi):={K(\xi, z)}/{K(z, z)^{{1}/{2}}}.
$
It is clear that $k_z \in A^2$ and $\|k_z\|_2=1$ for any $z \in \Omega$. Then substituting $k_z$ into the inequality above yields
$$
  \sup_{z \in \Omega} \left(B_{\frac{q}{p}}\mu_{u, \varphi, q}(z)\right)^{\frac{p}{2q}}=\sup_{z \in \Omega}  \left(\int_\Omega |k_z|^{\frac{2q}{p}} d\mu_{u,\varphi,q}\right)^{\frac{p}{2q}} \lesssim \sup_{z \in \Omega} \left(\int_\Omega |k_z|^{2} dV \right)^{\frac{1}{2}}=1.
$$
Thus $\sup\limits_{z \in \Omega} B_{\frac{q}{p}}\mu_{u, \varphi, q}(z)\lesssim 1$.
\medskip

$(3) \Longrightarrow (1)$. Let $Q \in \calD^i$ and $c_Q$ be the center of the tent $T(Q)$. Then by (3), we have 
\begin{eqnarray*}
1%
&\gtrsim& \int_\Omega \frac{|K(\xi, c_Q)|^{\frac{2q}{p}}}{K(c_Q, c_Q)^{\frac{q}{p}}} d\mu_{u, \varphi, q}(\xi) \ge \int_{T(Q)} \frac{|K(\xi, c_Q)|^{\frac{2q}{p}}}{K(c_Q, c_Q)^{\frac{q}{p}}} d\mu_{u, \varphi, q}(\xi) \\
&\simeq& |K(c_Q, c_Q)|^{\frac{q}{p}} \cdot \mu_{u, \varphi, q}(T(Q)) \simeq \frac{\mu_{u, \varphi, q}(T(Q))}{\textrm{Vol}(T(Q))^{\frac{q}{p}}}, 
\end{eqnarray*}
which clearly implies (i). Here, in the above estimates, we have used the fact that for any $Q \in \calD^i$, 
$$
|K(\xi, c_Q)| \simeq |K(c_Q, c_Q)| \simeq \frac{1}{\textrm{Vol}(T(Q))},
$$
which is a consequence of Theorem \ref{KobayashiKube} and \cite[Lemma 2.1, Lemma 3.1 and Lemma 3.2]{AS11}. 
\end{proof}

\begin{rem}
When $\Omega$ is the unit disc in $\C$ or the unit ball in $\C^n$, one can prove the (3) by simply testing $K(\cdot, z)^{\frac{2}{p}}$, since it is holomorphic and belongs to $A^p$. However, this is in general not true when $\Omega$ is a strictly  pseudoconvex domain, as the off-diagonal Bergman kernel $K(\cdot, \cdot)$ might be equal to $0$. 
\end{rem}

For the compactness of $W_{u, \varphi}$, we have the following result. 

\begin{thm} \label{compact}
Let $q \ge p \ge 1$, $u$ and $\varphi$ be defined as above. Then the following statement are equivalent:
\begin{enumerate}
    \item [(1).] $\mu_{u, \varphi, q} \in \calV\calC\calM_d\left(\frac{q}{p} \right)$; 
    \item [(2).] $W_{u, \varphi}: A^p \to A^q$ is compact;
    \item [(3).] The $\frac{q}{p}$-Berezin of $\mu_{u, \varphi, q}$ vanishes on $\mathbf b\Omega$, namely
    $$
    \lim_{z \to \mathbf b\Omega} B_{\frac{q}{p}}\mu_{u, \varphi, q}(z)=\lim_{z \in \mathbf b\Omega} \int_\Omega \frac{|K(\xi, z)|^{\frac{2q}{p}}}{K(z, z)^{\frac{q}{p}}} d\mu_{u, \varphi, q}(\xi)=0. 
    $$
\end{enumerate}
\end{thm}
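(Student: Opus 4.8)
The plan is to prove Theorem \ref{compact} by establishing the cycle of implications $(1)\Rightarrow(2)\Rightarrow(3)\Rightarrow(1)$, paralleling the structure of the proof of Theorem \ref{bounded} but inserting the standard ``vanishing tail'' refinements at each step. Throughout I would repeatedly use Lemma \ref{lem3.2} (which lets me pass freely between exponents) and the kube decomposition $\Omega = \bigcup_{Q\in\calD} T^{\calK}(Q) \cup T^{\calK}(\mathbf b\Omega)$, together with the sub-mean value estimate of Corollary \ref{sub-mean} and the kernel comparability $|K(\xi,c_Q)|\simeq |K(c_Q,c_Q)|\simeq \textrm{Vol}(T(Q))^{-1}$ for $\xi\in T(Q)$ from Theorem \ref{KobayashiKube}.

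For $(1)\Rightarrow(2)$: given that $\mu_{u,\varphi,q}\in\calV\calC\calM_d(q/p)$, I would show $W_{u,\varphi}$ maps any bounded sequence $\{f_m\}\subset A^p$ with $f_m\to 0$ uniformly on compact subsets to a sequence with $\|W_{u,\varphi}f_m\|_q\to 0$ (a standard criterion for compactness of operators into $A^q$). The key device is to split $\Omega$ into an ``interior'' part $K_\eta = \{z : \textrm{dist}(z,\mathbf b\Omega)\ge\eta\}$ and a boundary collar; on $K_\eta$ the uniform convergence handles things, while on the collar I would run the same chain of inequalities as in the proof of Theorem \ref{bounded}, but now the Carleson-type constant $\sup_{c(T(Q))\to\mathbf b\Omega}\mu_{u,\varphi,q}(T(Q))/\textrm{Vol}(T(Q))^{q/p}$ can be made arbitrarily small by taking $\eta$ small, using the finite-overlap property of $\{B_\Omega(c_j^k,\widetilde\beta)\}$. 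This gives $\limsup_m\|W_{u,\varphi}f_m\|_q$ as small as we like, hence $=0$.

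For $(2)\Rightarrow(3)$: compactness of $W_{u,\varphi}:A^p\to A^q$ implies, via Lemma \ref{lem3.2} with $t=p,s=q$, compactness of the embedding $A^2\hookrightarrow L^{2q/p}(\mu_{u,\varphi,q})$. Testing against the normalized kernels $k_z(\xi)=K(\xi,z)/K(z,z)^{1/2}$, which satisfy $\|k_z\|_2=1$ and $k_z\to 0$ uniformly on compacta as $z\to\mathbf b\Omega$ (this last fact follows from the boundary decay of the Bergman kernel, Lemma \ref{thm4.2}), we get $B_{q/p}\mu_{u,\varphi,q}(z)^{p/(2q)}=\|k_z\|_{L^{2q/p}(\mu_{u,\varphi,q})}\to 0$. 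For $(3)\Rightarrow(1)$: exactly as in the proof of Theorem \ref{bounded}, for $Q\in\calD^i$ with center $c_Q$ of $T(Q)$, restricting the Berezin integral to $T(Q)$ and using the kernel comparability gives $\mu_{u,\varphi,q}(T(Q))/\textrm{Vol}(T(Q))^{q/p}\lesssim B_{q/p}\mu_{u,\varphi,q}(c_Q)$, and letting $c(T(Q))\to\mathbf b\Omega$ forces the left side to $0$.

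The main obstacle I anticipate is the $(1)\Rightarrow(2)$ step: one must be careful that the compactness criterion (normal convergence on compacta plus uniform boundedness implies norm convergence) is correctly applicable for operators into $A^q$ when $q\ge 1$, and that the collar estimate genuinely decouples into an ``$\varepsilon$-small tail'' controlled by the vanishing Carleson condition plus a ``compact interior'' controlled by uniform convergence — the bookkeeping with the kubes $T^{\calK}(Q)$ whose centers $c_j^k$ lie deep in $\Omega$ versus near $\mathbf b\Omega$ needs attention, since a single kube straddles a range of distances $[\delta^{k+1},\delta^k]$ to the boundary. A secondary technical point is justifying $k_z\to 0$ locally uniformly as $z\to\mathbf b\Omega$ from Lemma \ref{thm4.2}; this should be routine but deserves an explicit sentence. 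I would otherwise keep the exposition brief, referring back to the proof of Theorem \ref{bounded} for the repeated estimates and to \cite[Lemma 2.1, Lemma 3.1 and Lemma 3.2]{AS11} for the on-diagonal kernel asymptotics.
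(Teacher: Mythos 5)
Your proposal is essentially the ``easy modification of Theorem \ref{bounded}'' that the paper itself alludes to (the paper omits the proof and refers the reader to exactly this kind of argument), and the $(1)\Rightarrow(2)$, $(3)\Rightarrow(1)$ legs are correct as you describe them. The split $\Omega = (\text{finitely many interior kubes}) \cup (\text{deep collar})$, with the vanishing Carleson constant absorbing the collar and uniform convergence on compacta killing the interior, is the right bookkeeping; and restricting the Berezin integral to $T(Q)$ and invoking $|K(\xi,c_Q)|\simeq K(c_Q,c_Q)\simeq \mathrm{Vol}(T(Q))^{-1}$ for $(3)\Rightarrow(1)$ is precisely what the paper does in the bounded case.

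The one genuine gap is in $(2)\Rightarrow(3)$: you assert that ``compactness of $W_{u,\varphi}:A^p\to A^q$ implies, via Lemma \ref{lem3.2} with $t=p,s=q$, compactness of the embedding $A^2\hookrightarrow L^{2q/p}(\mu_{u,\varphi,q})$.'' Lemma \ref{lem3.2}, as stated and cited, is an equivalence of \emph{boundedness} of inequalities; it does not by itself transfer the compactness (little-oh) property across the exponent change $l=2/p$. The transfer is genuinely needed here: as the remark after Theorem \ref{bounded} points out, one cannot test directly in $A^p$ with $K(\cdot,z)^{2/p}$ because the off-diagonal kernel may vanish, so passing to the $A^2\to L^{2q/p}(\mu)$ formulation and testing with $k_z$ is the right move, but the compactness analogue of Lemma \ref{lem3.2} must be stated and proved (or cited) separately. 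It is true and its proof is a routine ``vanishing-tail'' refinement of the Cima--Mercer argument, but as written your step conflates the boundedness equivalence with its compactness counterpart. A cleaner way to close this, if you prefer not to formulate a little-oh version of Lemma \ref{lem3.2}, is to note that your $(1)\Rightarrow(2)$ argument applies verbatim to the exponent pair $(2,\,2q/p)$ (the vanishing Carleson condition depends only on the ratio $q/p$), so that $(2)\Rightarrow$ boundedness $\Rightarrow$ $(1)$ by Theorem \ref{bounded}, then $(1)\Rightarrow$ compactness of $A^2\to L^{2q/p}(\mu)$, and from there test with $k_z$. Either route is fine, but the step needs to be made explicit rather than attributed to Lemma \ref{lem3.2} as it stands.
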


\begin{proof}
The proof of this result is standard and follows from an easy modification of Theorem \ref{bounded}. Therefore, we omit it here. 
\end{proof}


\section{Sparse domination of weighted composition operators on strictly  pseudoconvex domains}

In this section, we study the behavior of the weighted composition operator by using the sparse domination technique from harmonic analysis. As a consequence, we establish a new weighted type estimate of $W_{u, \varphi}$ acting on Bergman spaces, which seems not to be covered the classical Carleson measure technique. 

We begin with the following integral representation: $f \in A^1$, one has
\begin{equation} \label{integralreps}
f(z)=\int_\Omega K(z, w)f(w)dV(w), \quad z \in \Omega. 
\end{equation}
It is clear that \eqref{integralreps} holds when $p=2$; while for general $p \ge 1$, \eqref{integralreps} follows from the fact that $A^2$ is dense in $A^1$. 

\subsection{Boundedness.} In the first part of this section, we study the boundedness of $W_{u, \varphi}$ by using sparse domination. 

The following pointwise bound is important.

\begin{lem} \label{pointwise}
Let $q \ge p \ge 1$, $N \in \N$ with $1 \le N \le p$ and $\mu \in \calC\calM_d \left(\frac{q}{p} \right)$. Then for $z \in \Omega$, 
\begin{eqnarray} \label{pointwisesp}
\int_\Omega |f(w)|^{q-N} |K(z, w)|d\mu(w)%
&\lesssim& \sum_{i=1}^{K_0} \sum_{Q \in \calD^i} \frac{\one_{T(Q)}(z)}{\emph{\textrm{Vol}(T(Q))}} \cdot \emph{\textrm{Vol}}(T(Q))^{\frac{q}{p}-1} \nonumber \\
&& \quad \quad \quad\quad \times \int_{T(Q)} |f(z)|^{(q-N)}dV(z). 
\end{eqnarray}
\end{lem}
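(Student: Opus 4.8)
The plan is to decompose the integral over $\Omega$ using the Bergman-kernel estimate from Lemma \ref{lem001} together with the off-diagonal comparability \eqref{eq2020}. First, for a fixed $z$ and each $w \in \Omega$, Lemma \ref{lem001} produces an index $i(z,w) \in \{1,\dots,K_0\}$ and a dyadic cube $Q(z,w) \in \calD^{i(z,w)}$ with $z,w \in T(Q(z,w))$ and $|K(z,w)| \lesssim \textrm{Vol}(T(Q(z,w)))^{-1}$. Using this pointwise bound inside the integral gives
$$
\int_\Omega |f(w)|^{q-N}|K(z,w)|\,d\mu(w) \lesssim \int_\Omega \frac{|f(w)|^{q-N}}{\textrm{Vol}(T(Q(z,w)))}\,d\mu(w).
$$
Next I would group the $w$'s according to which tent $T(Q)$ (over all $Q$ in the $K_0$ adjacent grids) the pair $(z,w)$ gets assigned to; since for each $w$ there is a valid assignment and the number of grids is finite, one can dominate the integral by the sum over \emph{all} cubes $Q \in \bigcup_i \calD^i$ containing $z$, inserting the indicator $\one_{T(Q)}(z)$ and restricting the $w$-integral to $T(Q)$. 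This yields
$$
\int_\Omega |f(w)|^{q-N}|K(z,w)|\,d\mu(w) \lesssim \sum_{i=1}^{K_0}\sum_{Q \in \calD^i} \frac{\one_{T(Q)}(z)}{\textrm{Vol}(T(Q))} \int_{T(Q)} |f(w)|^{q-N}\,d\mu(w).
$$

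The remaining task is to replace $d\mu$ on the right-hand side by $\textrm{Vol}(T(Q))^{\frac{q}{p}-1}\,dV$, i.e. to pass from the measure $\mu$ back to Lebesgue measure at the cost of the claimed power of the volume. Here I would invoke a sub-mean-value estimate: since $|f|^{q-N}$ is (a power of the modulus of) a holomorphic function, hence plurisubharmonic, Corollary \ref{sub-mean} controls its values on each kube by its $dV$-average over a fixed dilate of the associated Kobayashi ball. Combined with the dyadic Carleson condition $\mu(T(Q)) \lesssim \textrm{Vol}(T(Q))^{q/p}$, a kube-by-kube argument (exactly as in the $(1)\Rightarrow(2)$ implication of Theorem \ref{bounded}, summing over the kubes $T^{\calK}(Q')$ contained in $T(Q)$ and using finite overlap of the enlarged Kobayashi balls) gives
$$
\int_{T(Q)} |f(w)|^{q-N}\,d\mu(w) \lesssim \textrm{Vol}(T(Q))^{\frac{q}{p}-1}\int_{\widetilde{T(Q)}} |f(w)|^{q-N}\,dV(w),
$$
where $\widetilde{T(Q)}$ is a fixed enlargement of $T(Q)$; absorbing the enlargement into the adjacent-grid structure (or simply noting $\widetilde{T(Q)} \subseteq T(Q')$ for a bounded number of comparable cubes $Q'$) recovers exactly \eqref{pointwisesp}.

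The main obstacle I anticipate is the bookkeeping in the second step: making the transition from "for each $w$ there exists an assigning cube" to a clean sum over a \emph{fixed} family of cubes indexed by the $K_0$ adjacent grids, while keeping the indicator $\one_{T(Q)}(z)$ and the restriction of the $w$-integral to $T(Q)$ consistent — one must check that the cube furnished by Lemma \ref{lem001} can indeed be taken to contain $z$ (not merely to satisfy the kernel bound) and that the nesting/finite-overlap properties of the adjacent grids prevent the finite constant $K_0$ from blowing up. The sub-mean-value and Carleson-summation part of the argument is essentially a repetition of the estimates already carried out in Theorem \ref{bounded}, so the new content is concentrated in this combinatorial reduction.
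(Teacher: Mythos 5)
Your proposal follows essentially the same path as the paper's own proof: first apply Lemma \ref{lem001} to reduce the kernel integral to a sum over tents containing $z$ (the paper's \eqref{eq01}), then on each $T(Q)$ decompose into kubes, invoke Corollary \ref{sub-mean} and the Carleson bound $\mu(T^{\calK}(Q')) \lesssim \textrm{Vol}(T^{\calK}(Q'))^{q/p}$, use finite overlap of the enlarged Kobayashi balls to land on an enlarged region $T^{\calE}(Q)$, and finally absorb the enlargement via Proposition \ref{adjac} into a comparable cube from one of the $K_0$ adjacent grids. The bookkeeping concern you flag at the end is real but already handled by the paper: Lemma \ref{lem001} explicitly produces a tent containing \emph{both} $z$ and $w$, and the adjacent-grid reassignment $Q \mapsto \widetilde{Q}$ has bounded multiplicity since $\ell(\widetilde{Q}) \simeq \ell(Q)$, so your argument is sound.
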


\begin{proof}
Note that by Lemma \ref{lem001}, we have for any $z \in \Omega$, 
\begin{equation} \label{eq01}
\textrm{LHS of} \ \eqref{pointwisesp} \lesssim \sum_{i=1}^{K_0} \sum_{Q \in \calD^i} \frac{\one_{T(Q)}(z)}{\textrm{Vol}(T(Q))} \int_{T(Q)} |f(w)|^{q-N} d\mu(w).  
\end{equation}
We further bound the term 
\begin{equation} \label{eq02}
\int_{T(Q)} |f(w)|^{(q-N)} d\mu(w)
\end{equation}
by the tree structure on $\Omega$. More precisely, let $c_Q$ be the center of $T(Q)$ and by Corollary \ref{sub-mean},  we have
\begin{eqnarray*} 
\eqref{eq02}%
&=& \sum_{c_{Q'}: \ \textrm{an offspring of} \ c_Q} \int_{T^{\calK}(Q')} |f(w)|^{q-N} d\mu(w) \nonumber \\
&\le& \sum_{c_{Q'}: \ \textrm{an offspring of} \ c_Q} \frac{\mu\left(T^{\calK}(Q')\right)}{\textrm{Vol}(T^{\calK}(Q'))} \int_{B_\Omega(c(Q'), \beta)} |f(w)|^{q-N} dV(w) \nonumber \\
\end{eqnarray*}
Since $\mu \in \calC\calM_d \left(\frac{q}{p} \right)$, we further have 
\begin{eqnarray} \label{eq03}
	&& \sum_{c_{Q'}: \ \textrm{an offspring of} \ c_Q} \frac{\mu\left(T^{\calK}(Q')\right)}{\textrm{Vol}(T^{\calK}(Q'))} \int_{B_\Omega(c(Q'), \beta)} |f(w)|^{q-N} dV(w) \nonumber \\
&\lesssim& \sum_{c_{Q'}: \ \textrm{an offspring of} \ c_Q} \left[\textrm{Vol}(T^{\calK}(Q'))\right]^{\frac{q}{p}-1} \int_{B_\Omega(c(Q'), \beta)} |f(w)|^{q-N} dV(w)\nonumber  \\
&\le& \sup_{c_{Q'}: \ \textrm{an offspring of} \ c_Q} \left[\textrm{Vol}(T^{\calK}(Q'))\right]^{\frac{q}{p}-1} \nonumber \\
&& \quad \quad \quad \quad \quad \quad  \times\sum_{c_{Q'}: \ \textrm{an offspring of} \ c_Q} \int_{B_\Omega(c(Q'), \beta)} |f(w)|^{q-N} dV(w) \nonumber \\
&\lesssim& \left[\textrm{Vol}(T^{\calK}(Q))\right]^{\frac{q}{p}-1} \cdot \int_{T^{\calE}(Q)} |f(z)|^{q-N} dV(w).
\end{eqnarray}
Here 
\begin{equation} \label{2001}
T^{\calE}(Q):=\bigcup_{c_{Q'}: \ \textrm{an offspring of} \ c_Q} B_\Omega(c_{Q'}, \beta),
\end{equation} 
and the last inequality uses the fact that $\{B_\Omega(c_Q, \beta)\}_{Q \in \calD^i}$ has finitely many overlaps. 

Combining \eqref{eq01} and \eqref{eq03}, we find that
\begin{equation} \label{eq05}
\textrm{LHS of} \ \eqref{pointwisesp} \lesssim   \sum_{i=1}^{K_0} \sum_{Q \in \calD^i} \frac{\one_{T(Q)}(z)}{\textrm{Vol}(T(Q))}  \cdot  \left[\textrm{Vol}(T^{\calK}(Q))\right]^{\frac{q}{p}-1} \int_{T^{\calE}(Q)} |f(z)|^{q-N} dV(w) 
\end{equation}
To this end, we wish to replace the integral domain $T^{\calE}(Q)$ in the above estimate by $T(\widetilde{Q})$ for some $\widetilde{Q} \in \calD^j$. To see this, we let $\ell(Q)=\del^k$ for some $k \ge 0$. Following a similar argument as in Theorem \ref{KobayashiKube}, we see that there exists some constant $C>0$, which is independent of $k$, such that 
$$
T^{\calE}(Q) \cap \mathbf b\Omega \subset B(c(Q), C\del^k), 
$$
where we recall $c(Q) \in \mathbf b\Omega$ is the center of $Q$ and $B(c(Q), C\del^k)$ is defined as in \eqref{bball}. Therefore, by Proposition \ref{adjac}, there exists dyadic cube $\widetilde{Q} \in \calD^j$ for some $j \in \{1, \dots, K_0\}$, such that
$$
T^{\calE}(Q) \subset \widetilde{Q} \quad \textrm{and} \quad \ell(\widetilde{Q}) \lesssim \del^k, 
$$
which further implies that
\begin{eqnarray*}
&&\frac{\one_{T(Q)}(z)}{\textrm{Vol}(T(Q))} \cdot \left[\textrm{Vol}(T^{\calK}(Q))\right]^{ \frac{q}{p}-1} \cdot \int_{T^{\calE}(Q)} |f(z)|^{q-N} dV(w)  \\
&& \quad \quad \quad \quad  \lesssim \frac{\one_{T(\widetilde{Q})}(z)}{\textrm{Vol}(T(\widetilde{Q}))} \cdot \left[\textrm{Vol}(T^{\calK}(\widetilde{Q}))\right]^{\frac{q}{p}-1} \cdot \int_{T^{\calE}(\widetilde{Q})} |f(z)|^{q-N} dV(w).
\end{eqnarray*}
Combining the above estimate with \eqref{eq05} then yields the desired estimate \eqref{pointwisesp}. 
\end{proof}

We are ready to state our main results for the boundedness. 

\begin{thm} \label{thm001}
Let $q \ge p \ge 1$, $u \in H(\Omega)$ and $\varphi:\Omega \to \Omega$ be a holomorphic mapping. Let further, $W_{u, \varphi}: A^p \to A^q$ be bounded. Then for any $\gamma \ge 1$and $f \in A^p$,
\begin{equation} \label{eq11}
\|W_{u, \varphi} f\|_q^q \lesssim \inf_{N \in \N, 1\le N \le p} \left( \sum_{i=1}^{K_0} \sum_{Q \in \calD^i} \left[\emph{\textrm{Vol}}(T(Q)) \right]^{\frac{q}{p}} \langle |f|^N \rangle_{T(Q)} \cdot \langle |f|^{q-N} \rangle_{T(Q)} \right). 
\end{equation}
\end{thm}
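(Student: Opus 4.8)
\emph{Sketch of proof.} The plan is to deduce \eqref{eq11} from the pointwise estimate of Lemma \ref{pointwise}, after using the integral representation \eqref{integralreps} on an integer power of $f$. Fix $N\in\N$ with $1\le N\le p$; I would prove \eqref{eq11} with the infimum replaced by this fixed $N$ and only then take the infimum, the finitely many resulting implicit constants $C_N$, $1\le N\le p$, being absorbed into a single constant depending on $p$. Since $\Omega$ is bounded, $f\in A^p\subseteq A^N$, hence $f^N\in A^{p/N}\subseteq A^1$ and the reproducing formula \eqref{integralreps} applies to $f^N$, giving $f(z)^N=\int_\Omega K(z,w)f(w)^N\,dV(w)$ for every $z\in\Omega$. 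The hypothesis $\gamma\ge 1$ plays no role in \eqref{eq11} and appears to be vestigial.

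The first step is a pointwise factorization: for $z\in\Omega$ write $|f(z)|^q=|f(z)|^{q-N}\,|f(z)^N|$ and bound $|f(z)^N|\le\int_\Omega |K(z,w)|\,|f(w)|^N\,dV(w)$ using the reproducing formula above. Recalling from the definition of $\mu_{u,\varphi,q}$ that $\|W_{u,\varphi}f\|_q^q=\int_\Omega |f|^q\,d\mu_{u,\varphi,q}$, I would integrate this inequality against $d\mu_{u,\varphi,q}$ and apply Tonelli (all integrands are non-negative) to pull the $w$-integral outside, obtaining
\[
\|W_{u,\varphi}f\|_q^q\le\int_\Omega |f(w)|^N\left(\int_\Omega |f(z)|^{q-N}\,|K(z,w)|\,d\mu_{u,\varphi,q}(z)\right)dV(w).
\]

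Next, since $W_{u,\varphi}\colon A^p\to A^q$ is bounded, Theorem \ref{bounded} yields $\mu_{u,\varphi,q}\in\calC\calM_d\!\left(\tfrac qp\right)$, so Lemma \ref{pointwise} applies with $\mu=\mu_{u,\varphi,q}$; using $|K(z,w)|=|K(w,z)|$ to match the form there (with the free variable of the lemma being our $w$ and its dummy variable our $z$), the inner integral is bounded by
\[
\sum_{i=1}^{K_0}\sum_{Q\in\calD^i}\frac{\one_{T(Q)}(w)}{\textrm{Vol}(T(Q))}\,\textrm{Vol}(T(Q))^{\frac qp-1}\int_{T(Q)}|f(z)|^{q-N}\,dV(z).
\]
Substituting and interchanging the (positive) double sum with the $w$-integral produces, for each $Q$, the product $\big(\int_{T(Q)}|f|^N\,dV\big)\big(\int_{T(Q)}|f|^{q-N}\,dV\big)$ against the volume factor $\textrm{Vol}(T(Q))^{\frac qp-1}/\textrm{Vol}(T(Q))$. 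Rewriting each integral as $\textrm{Vol}(T(Q))\langle\,\cdot\,\rangle_{T(Q)}$, the exponents of $\textrm{Vol}(T(Q))$ collapse to $\tfrac qp$, which is precisely $\sum_{i=1}^{K_0}\sum_{Q\in\calD^i}\textrm{Vol}(T(Q))^{q/p}\langle |f|^N\rangle_{T(Q)}\langle |f|^{q-N}\rangle_{T(Q)}$. Taking the infimum over $N\in\{1,\dots,\lfloor p\rfloor\}$ completes the argument.

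I do not expect a genuine obstacle here: once Lemma \ref{pointwise} is available the proof is essentially bookkeeping of the volume exponents. The two points needing (routine) care are that $f^N$ genuinely satisfies \eqref{integralreps} — guaranteed by $N\le p$ together with boundedness of $\Omega$ — and that the interchanges of integrals and of integral with sum are legitimate, which is immediate from non-negativity via Tonelli. The only mild conceptual step is recognizing that the inner integral after Fubini is, up to the Hermitian symmetry of $K$, exactly in the scope of Lemma \ref{pointwise}.
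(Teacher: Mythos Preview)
Your proposal is correct and follows essentially the same route as the paper: factor $|f|^q=|f|^{q-N}\,|f^N|$, apply the reproducing formula \eqref{integralreps} to $f^N$, interchange integrals via Tonelli to arrive at $\int_\Omega |f(w)|^N\big(\int_\Omega |f(z)|^{q-N}|K(z,w)|\,d\mu_{u,\varphi,q}(z)\big)\,dV(w)$, and then invoke Lemma \ref{pointwise}. If anything you are slightly more explicit than the paper in noting that Theorem \ref{bounded} is needed to verify $\mu_{u,\varphi,q}\in\calC\calM_d(q/p)$ before Lemma \ref{pointwise} can be applied, and in observing that the hypothesis $\gamma\ge 1$ is unused.
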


\begin{proof}
Let $f \in A^p$ and we let $N \in \N$ with $1 \le N \le p$. Then using \eqref{integralreps} and the fact that $A^\frac{p}{N} \subset A^1$, we have
\begin{eqnarray*}
\|W_{u, \varphi} f\|_q^q%
&=& \int_\Omega |u(z)|^q \left|C_\varphi f(z) \right|^qdV(z)\\
&=& \int_\Omega |u(z)|^q \left| f \left(\varphi(z)\right) \right|^{q-N} \left|C_\varphi(f^N)(z) \right|dV(z) \\
&\lesssim& \int_\Omega |u(z)|^q |f \circ \varphi(z)|^{q-N}  \left(\int_\Omega |f(w)|^N |K(\varphi(z), w)| dV(w) \right) dV(z) \\
&=& \int_\Omega |f(w)|^N \left(\int_\Omega |u(z)|^q |f\circ \varphi(z)|^{q-N} |K(\varphi(z), w)| dV(z) \right) dV(w) \\
&=& \int_\Omega |f(w)|^N \left(\int_\Omega |f(z)|^{q-N} |K(z, w)|d\mu_{u, \varphi, q}(z) \right) dV(w). 
\end{eqnarray*}
We can now bound the interior integral using Lemma \ref{pointwise}, and clearly this implies the desired result. 
\end{proof}

Here, we call the estimate \eqref{eq11} a \emph{sparse domination estimate} associated to the operator $W_{u, \varphi}$. Below we provide some of its application. 

\begin{thm} \label{thm002}
Let $q \ge 1$, $u \in H(\Omega)$ and $\varphi:\Omega \to \Omega$ be a holomorphic mapping. Then the following statements are equivalent:
\begin{enumerate}
\item [(\textit{i}).] $W_{u, \varphi}: A^q \to A^q$ is bounded;
\item [(\textit{ii}).] For any $f \in A^q$, 
\begin{equation} \label{sparse-01}
\|W_{u, \varphi} f\|_q^q \lesssim \inf_{N \in \N, 1\le N \le p} \left(\sum_{i=1}^{K_0} \sum_{Q \in \calD^i} \emph{\textrm{Vol}}(T(Q)) \langle |f|^N \rangle_{T(Q)} \langle |f|^{q-N} \rangle_{T(Q)} \right). 
\end{equation} 
\end{enumerate}
\end{thm}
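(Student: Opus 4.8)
The plan is to prove the two implications, using that (\textit{i}) $\Rightarrow$ (\textit{ii}) is nothing more than Theorem \ref{thm001} specialized to $p=q$, while (\textit{ii}) $\Rightarrow$ (\textit{i}) reduces to showing that the sparse bilinear form on the right of \eqref{sparse-01} is controlled by $\|f\|_q^q$ for every $f\in A^q(\Omega)$ — a statement in which $W_{u,\varphi}$ no longer appears. For the first implication, taking $p=q$ in Theorem \ref{thm001} makes $q/p=1$, so $[\textrm{Vol}(T(Q))]^{q/p}=\textrm{Vol}(T(Q))$ and \eqref{eq11} becomes \eqref{sparse-01} verbatim.

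For (\textit{ii}) $\Rightarrow$ (\textit{i}) I would treat the case $q>1$ and take $N=1$ in \eqref{sparse-01}; it then suffices to prove
\[
\sum_{i=1}^{K_0}\sum_{Q\in\calD^i}\textrm{Vol}(T(Q))\,\langle|f|\rangle_{T(Q)}\,\langle|f|^{q-1}\rangle_{T(Q)}\ \lesssim\ \|f\|_q^q ,\qquad f\in A^q(\Omega),
\]
since combined with (\textit{ii}) this gives $\|W_{u,\varphi}f\|_q^q\lesssim\|f\|_q^q$. First I would write each summand as $\langle|f|\rangle_{T(Q)}\int_{T(Q)}|f|^{q-1}\,dV$ and apply Tonelli's theorem to rewrite the left-hand side as
\[
\int_\Omega|f(z)|^{q-1}\!\int_\Omega|f(w)|\,\Phi(z,w)\,dV(w)\,dV(z),\qquad
\Phi(z,w):=\sum_{i=1}^{K_0}\ \sum_{Q\in\calD^i:\,z,w\in T(Q)}\frac{1}{\textrm{Vol}(T(Q))}.
\]
The geometry enters through the fact that, within a fixed adjacent grid $\calD^i$, the tents containing two given points $z$ and $w$ form a nested chain — namely the tents over the common ancestors of the two cubes whose kubes contain $z$ and $w$ — so controlling $\Phi$ is a Bergman-kernel estimate.

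The heart of the matter, and the step I expect to be the main obstacle, is the bound
\[
\Phi(z,w)\ \lesssim\ |K(z,w)|+1 ,\qquad z,w\in\Omega .
\]
To prove it, fix a grid $\calD^i$: since $\textrm{Vol}(T(Q))\simeq\ell(Q)^{n+1}$ with $\ell(Q)=\delta^k$, the reciprocal volumes grow geometrically as the tents of the chain shrink, so the $\calD^i$-part of $\Phi$ is comparable to $\textrm{Vol}(T(Q^i_{z,w}))^{-1}$, where $T(Q^i_{z,w})$ is the smallest tent of $\calD^i$ containing both points. One then argues as in the proof of Theorem \ref{KobayashiKube}: if $\ell(Q)$ is small and $z,w\in T(Q)$, then Definition \ref{extension} and the sandwich property force
\[
|\rho(z)|+|\rho(w)|+|z_n-w_n|+\sum_{k<n}|z_k-w_k|^2\ \lesssim\ \ell(Q)
\]
in the $\zeta$-coordinates attached to $Q$, and feeding this into the off-diagonal kernel bound of Lemma \ref{thm4.2} together with the volume formula recorded just after it gives $\textrm{Vol}(T(Q))\gtrsim 1/|K(z,w)|$; the finitely many tents of the chain whose sidelength is bounded below (in particular $\Omega=T(\mathbf b\Omega)$) contribute only the harmless additive constant.

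Granting this, the double integral above is bounded by $\int_\Omega|f(z)|^{q-1}P^+(|f|)(z)\,dV(z)+\|f\|_q^q$, where $P^+g(z):=\int_\Omega|K(z,w)|\,g(w)\,dV(w)$ is the maximal Bergman projection and the additive term is disposed of by Hölder's inequality on the bounded domain $\Omega$. Applying Hölder with conjugate exponents $\tfrac{q}{q-1}$ and $q$, and then the classical $L^q$-boundedness of $P^+$ on a strictly pseudoconvex domain for $q>1$ (a Schur-test argument with the kernel bounds of \cite{McN03}), yields $\int_\Omega|f|^{q-1}P^+(|f|)\,dV\le\|f\|_q^{q-1}\|P^+(|f|)\|_q\lesssim\|f\|_q^{q-1}\|f\|_q=\|f\|_q^q$, which completes the argument; the endpoint $q=1$ (where $N=1$ is forced) would have to be treated by a separate, more elementary estimate.
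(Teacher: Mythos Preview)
Your route for $(ii)\Rightarrow(i)$ is genuinely different from the paper's, but the step you yourself flag as the main obstacle has a real gap. You want $\Phi(z,w)\lesssim |K(z,w)|+1$, and you argue that when $z,w\in T(Q)$ with $\ell(Q)$ small one has
\[
\Psi(z,w):=|\rho(z)|+|\rho(w)|+|z_n-w_n|+\sum_{k<n}|z_k-w_k|^2\ \lesssim\ \ell(Q),
\]
and that ``feeding this into Lemma~\ref{thm4.2}'' gives $\textrm{Vol}(T(Q))\gtrsim 1/|K(z,w)|$. But Lemma~\ref{thm4.2} is an \emph{upper} bound, $|K(z,w)|\lesssim \Psi(z,w)^{-(n+1)}$. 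Combined with $\Psi\lesssim\ell(Q)$ this only yields $1/|K(z,w)|\gtrsim \Psi^{n+1}$ and $\textrm{Vol}(T(Q))\gtrsim \Psi^{n+1}$, two inequalities pointing the same way that do not compare $\textrm{Vol}(T(Q))$ with $1/|K(z,w)|$. What you actually need is the \emph{lower} bound $|K(z,w)|\gtrsim \Psi(z,w)^{-(n+1)}$ near the boundary diagonal. That does follow from Fefferman's asymptotic expansion \cite{Fefferman,BS76} (cf.\ \eqref{eq2020}), so your strategy can be repaired, but it requires that deeper input; and you must still argue the global case split carefully, since---as the Remark after Theorem~\ref{bounded} warns---$K(\cdot,\cdot)$ can vanish off the diagonal on a general strictly pseudoconvex domain, so a naive global lower bound on $|K|$ is false.

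For comparison, the paper avoids kernels entirely in this direction. It passes from $\textrm{Vol}(T(Q))$ to the comparable $\textrm{Vol}(T^{\calK}(Q))$, uses the pairwise disjointness of the kubes to dominate the sparse form pointwise by $\int_\Omega \calM_{\calT}(|f|^N)\,\calM_{\calT}(|f|^{q-N})\,dV$, and then finishes with H\"older (exponents $q/N$ and $q/(q-N)$) together with the unweighted strong $(p,p)$ bound for the maximal operator over the Muckenhoupt basis of tents (Theorem~\ref{Mbasis} with $\omega\equiv 1$). This stays inside the dyadic machinery already built in Section~\ref{Sec02} and needs no off-diagonal lower bound on the Bergman kernel and no $L^q$-boundedness of $P^+$. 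Your approach, once the Fefferman lower bound is supplied, trades Theorem~\ref{Mbasis} for those two external ingredients; it also leaves the endpoint $q=1$ (where $P^+$ is not $L^1$-bounded) genuinely unfinished, whereas the paper's maximal-function argument handles all $N<q$ uniformly.
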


\begin{proof}
The assertion $(i)$ implies $(ii)$ is clear by taking $p=q$ and $\gamma=1$ in Theorem \ref{thm001}. Therefore, it suffices for us to show $(ii)$ implies $(i)$. Let us fix any $N \in \N$ with $1 \le N < q$ (when $q=N$, the desired result follows from a similar argument and we leave the detail to the interested reader). Then for any $f \in A^q$, we can find an $i_0 \in \{1, \dots, K_0\}$, such that
\begin{eqnarray*} 
&& \sum_{i=1}^{K_0} \sum_{Q \in \calD^i} \textrm{Vol}(T(Q)) \langle |f|^N \rangle_{T(Q)} \langle |f|^{q-N} \rangle_{T(Q)}  \\
&& \quad \quad \quad  \quad \quad \quad \quad \le K_0 \sum_{Q \in \calD^{i_0}} \textrm{Vol}(T(Q)) \langle |f|^N \rangle_{T(Q)} \langle |f|^{q-N} \rangle_{T(Q)}.
\end{eqnarray*}
Therefore, we have
\begin{eqnarray*}
\|W_{u, \varphi}f \|_q^q%
&\lesssim& \sum_{Q \in \calD^{i_0}} \textrm{Vol}(T(Q)) \langle |f|^N \rangle_{T(Q)} \langle |f|^{q-N} \rangle_{T(Q)} \\
&\lesssim& \sum_{Q \in \calD^{i_0}} \textrm{Vol}(T^{\calK}(Q)) \langle |f|^N \rangle_{T(Q)} \langle |f|^{q-N} \rangle_{T(Q)} \\
&\lesssim& \int_\Omega \calM_{\calT^{i_0}} \left(|f|^N\right) \calM_{\calT^{i_0}} \left(|f|^{q-N} \right) dV(z) \\
\end{eqnarray*}
Since $\{T^{\calK}(Q)\}$ are pairwise disjoint, we have
\begin{eqnarray*}
\|W_{u, \varphi}f \|_q^q%
&\le& \left( \int_\Omega \calM_{\calT^{i_0}}(|f|^N)^{\frac{q}{N}}dV(z)  \right)^{\frac{N}{q}} \cdot \left(\int_\Omega \calM_{\calT^{i_0}}(|f|^{q-N})^{\frac{q}{q-N}} dV(z) \right)^{\frac{q-N}{q}} \\
&\lesssim& \left(\int_\Omega |f|^{N \cdot \frac{q}{N}}dV(z) \right)^{\frac{N}{q}} \cdot \left( \int_\Omega |f|^{(q-N) \cdot \frac{q}{q-N}}dV(z) \right)^{\frac{q-N}{q}} \\
&=& \|f\|_q^q, 
\end{eqnarray*} 
where in the above estimates, we recall that $\calT^{i_0}$ is the dyadic extension of $\calD^{i_0}$ inside $\Omega$, $\calM_\calT$ is the maximal operator associated to the Muckenhoupt basis $\calT$ and we have used the estimate \eqref{maximal} with $\omega \equiv 1$ in our last estimate. 
\end{proof}

\begin{rem}
When $\Omega$ is the upper half plane, one can avoid using the maximal operator $\calM_\calT$ associated to a Muckenhoupt basis by simply replacing it with the standard uncentered Hardy-Littlewood maximal operator, which clearly enjoys the strong $(p, p)$ estimate. The reason for us to use $\calM_{\calT}$ here is that Kobayashi balls do not necessarily satisfy the ``doubling property" (see, e.g., \cite[Lemma 1.23]{Zhu04} for the unit ball case) and hence the Hardy-Littlewood maximal operator over all Kobayashi balls might fail the strong $(p, p)$-estimate. 
\end{rem}

We now turn to the case when $p<q$. To begin with, we defined the \emph{fractional maximal operator} associated to $\calT$ as follows: for any $0<\alpha<1$, 
$$
\calM_{\calT, \alpha}f(z)= \sup_{z \in T(Q) \in \calT} \textrm{Vol}(T(Q))^\alpha \langle f \rangle_{T(Q)}. 
$$

\begin{lem} \label{fractional}
For any $0<\alpha<1$ and $q>p>1$ satisfying $\frac{1}{p}-\frac{1}{q}=\alpha$, $\calM_{\calT, \alpha}: L^p(\Omega) \to L^q(\Omega)$ is bounded.  
\end{lem}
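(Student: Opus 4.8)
The plan is to reduce the fractional maximal bound to the already-established $L^p$ boundedness of the (non-fractional) maximal operator $\calM_{\calT}$ coming from Theorem \ref{Mbasis}, by factoring out the right power of the volume. First I would observe the trivial identity $\textrm{Vol}(T(Q))^\alpha\langle f\rangle_{T(Q)}=\textrm{Vol}(T(Q))^\alpha\langle f\rangle_{T(Q)}$ and split the average so that $\calM_{\calT,\alpha}f(z)\le \big(\calM_{\calT}(|f|^{p})(z)\big)^{1/q}\cdot\big(\sup_{z\in T(Q)}\textrm{Vol}(T(Q))^{\alpha-1/q}\,\|f\|_{L^1(T(Q))}^{1-p/q}\big)$ --- but this second factor is awkward, so instead I would argue more cleanly via the standard Welland/Hedberg trick. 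Namely, for a fixed $z$ and a fixed tent $T(Q)\ni z$, I would bound $\langle f\rangle_{T(Q)}$ itself by interpolating between $\calM_{\calT}f(z)$ and an integral over all of $\Omega$: split $f=f\one_{\{|f|\le\lambda\}}+f\one_{\{|f|>\lambda\}}$ for a parameter $\lambda>0$ to be optimized, controlling the first piece by $\lambda\cdot\textrm{Vol}(T(Q))$ and... actually the cleanest route given the tools here is Hedberg's inequality.

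The key steps, in order, would be: (1) For $f\ge 0$ and $z\in\Omega$, pick any $T(Q)\in\calT$ with $z\in T(Q)$; write $\textrm{Vol}(T(Q))^\alpha\langle f\rangle_{T(Q)}=\textrm{Vol}(T(Q))^{\alpha}\cdot\textrm{Vol}(T(Q))^{-1}\int_{T(Q)}f\,dV$. (2) Apply Hölder with exponents $p$ and $p'$ inside the integral: $\int_{T(Q)}f\,dV\le\big(\int_{T(Q)}f^p\,dV\big)^{1/p}\textrm{Vol}(T(Q))^{1/p'}$, so $\textrm{Vol}(T(Q))^\alpha\langle f\rangle_{T(Q)}\le \textrm{Vol}(T(Q))^{\alpha-1/p}\big(\int_{T(Q)}f^p\,dV\big)^{1/p}$. (3) Since $\alpha-1/p=-1/q$, this gives $\calM_{\calT,\alpha}f(z)\le\sup_{z\in T(Q)}\textrm{Vol}(T(Q))^{-1/q}\big(\int_{T(Q)}f^p\,dV\big)^{1/p}$; but that is too lossy. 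The genuinely correct argument is the Hedberg split: fix $\lambda>0$ and estimate $\langle f\rangle_{T(Q)}\le\langle f\one_{f\le\lambda}\rangle_{T(Q)}+\langle f\one_{f>\lambda}\rangle_{T(Q)}\le\lambda+\textrm{Vol}(T(Q))^{-1}\|f\|_p^{p}\lambda^{1-p}$, hence $\textrm{Vol}(T(Q))^\alpha\langle f\rangle_{T(Q)}\le \textrm{Vol}(T(Q))^\alpha\lambda+\textrm{Vol}(T(Q))^{\alpha-1}\|f\|_p^p\lambda^{1-p}$; optimizing in $\lambda$ and using $\textrm{Vol}(\Omega)<\infty$ together with $\textrm{Vol}(T(Q))\le\textrm{Vol}(\Omega)$ to control the volume powers, one obtains $\calM_{\calT,\alpha}f(z)\lesssim\big(\calM_{\calT}(f^p)(z)\big)^{1/q}\|f\|_p^{1-p/q}$ after choosing $\lambda^p=\textrm{Vol}(T(Q))^{-1}\|f\|_p^p/\calM_{\calT}(f^p)(z)$ appropriately, keeping track of the fact that one can always shrink $T(Q)$ in the supremum so that $\calM_{\calT}(f^p)(z)$ genuinely appears.

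(4) Raise to the $q$-th power and integrate over $\Omega$: $\|\calM_{\calT,\alpha}f\|_q^q\lesssim\|f\|_p^{(1-p/q)q}\int_\Omega\big(\calM_{\calT}(f^p)(z)\big)^{q/q\cdot p/p}\,dV(z)$ --- more precisely $\int_\Omega\big(\calM_{\calT,\alpha}f\big)^q\lesssim\|f\|_p^{q-p}\int_\Omega\calM_{\calT}(f^p)\,dV$; but $\calM_{\calT}$ is only $L^s$-bounded for $s>1$, not $L^1$-bounded, so I would instead raise the pointwise bound to a slightly different power: from $\calM_{\calT,\alpha}f(z)\lesssim\big(\calM_{\calT}(f^{p})(z)\big)^{1/q}\|f\|_p^{1-p/q}$ we get $\|\calM_{\calT,\alpha}f\|_q^q\lesssim\|f\|_p^{q-p}\int_\Omega\calM_{\calT}(f^p)^{q/q}\cdot\ldots$ --- to make this integrable I replace $p$ in the Hedberg split by some $p_0$ with $1<p_0<p$, obtaining $\calM_{\calT,\alpha}f(z)\lesssim\big(\calM_{\calT}(f^{p_0})(z)\big)^{\theta}\|f\|_{p}^{1-\theta p/p_0\cdot(\dots)}$ with the exponents arranged so that $\calM_{\calT}(f^{p_0})$ is raised to a power $>1$ in $L^{p/p_0}$, where Theorem \ref{Mbasis} applies with weight $\omega\equiv 1$. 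Then (5) apply \eqref{maximal} with $\omega\equiv1$ to conclude $\int_\Omega\calM_{\calT}(f^{p_0})^{p/p_0}\,dV\lesssim\int_\Omega|f|^p\,dV$, and collect exponents to finish.

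The main obstacle is the bookkeeping in step (4)--(5): since $\calM_{\calT}$ from Theorem \ref{Mbasis} is only guaranteed to be strong $(s,s)$ for $s>1$ (and not weak $(1,1)$), I must run the Hedberg argument with an auxiliary exponent $p_0\in(1,p)$ rather than with $p$ itself, and then verify that the resulting powers satisfy the Hölder/scaling constraint $\tfrac1p-\tfrac1q=\alpha$ consistently — equivalently, choosing $p_0$ so that after integration the power of $\calM_{\calT}(f^{p_0})$ equals $p/p_0>1$. Boundedness of $\Omega$ (so all volumes are bounded above) is used to absorb stray powers of $\textrm{Vol}(T(Q))$; no deeper geometric input about the tents beyond $\textrm{Vol}(T(Q))\le\textrm{Vol}(\Omega)$ and Theorem \ref{Mbasis} is needed.
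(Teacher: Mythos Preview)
Your overall strategy --- a Hedberg-type pointwise reduction to the non-fractional maximal operator $\calM_{\calT}$, followed by Theorem~\ref{Mbasis} --- is legitimate and is \emph{different} from the paper's proof, which instead establishes the weak-type $(p,q)$ bound directly (choose, for each level $t$, the maximal pairwise-disjoint tents where the fractional average exceeds $t$, apply H\"older, and use $1+\alpha q-q+q/p'=0$), and then interpolates. The paper's route exploits the dyadic nestedness of the tents; yours only needs Theorem~\ref{Mbasis}.

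That said, your execution of Hedberg is tangled, and the difficulty you identify in step~(4)--(5) is self-inflicted. The clean pointwise inequality is
\[
\calM_{\calT,\alpha}f(z)\ \le\ \big(\calM_{\calT}f(z)\big)^{p/q}\,\|f\|_p^{\,1-p/q},
\]
obtained by splitting the supremum according to whether $\textrm{Vol}(T(Q))\le R$ or $>R$: in the first case $\textrm{Vol}(T(Q))^\alpha\langle f\rangle_{T(Q)}\le R^{\alpha}\calM_{\calT}f(z)$, and in the second, H\"older gives $\textrm{Vol}(T(Q))^\alpha\langle f\rangle_{T(Q)}\le \textrm{Vol}(T(Q))^{\alpha-1/p}\|f\|_p\le R^{-1/q}\|f\|_p$; now optimise in $R$. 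Raising to the $q$-th power and integrating gives
\[
\|\calM_{\calT,\alpha}f\|_q^q\ \le\ \|f\|_p^{\,q-p}\int_\Omega\big(\calM_{\calT}f\big)^p\,dV\ \lesssim\ \|f\|_p^q,
\]
where the last step is exactly Theorem~\ref{Mbasis} with $\omega\equiv 1$ and exponent $p>1$. No $L^1$-boundedness of $\calM_{\calT}$ is required, so the auxiliary exponent $p_0<p$ is unnecessary. Your version with $\big(\calM_{\calT}(f^p)\big)^{1/q}$ is a weakening (via Jensen) of the bound above, and it is precisely this weakening that forces you into the $L^1$ trap; drop it and the proof is two lines. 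The boundedness of $\Omega$ is not needed either.
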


\begin{proof}
This result is standard, and to be self-contained, we include the proof here. Note that it suffices to prove $\calM_{\calT, \alpha}: L^p(\Omega) \to L^{q, \infty}(\Omega)$ is bounded and the strong $(p, q)$ estimate then follows from interpolation. 

Let $t>0$ and $f \in L^p(\Omega)$. Note that if $z \in \Omega$ such that $\calM_{\calT, \alpha}f(z)>t$, then there exists a maximal tent $T(Q)$ such that 
$$
\textrm{Vol}(T(Q))^\alpha \langle f \rangle_{T(Q)}>t.
$$
Denote $\calT_t$ to be set of maximal disjoint tent $T(Q)$ with this property. Then we have
\begin{eqnarray*}
&& t^q \textrm{Vol} \left( \left\{z \in \Omega: \calM_{\calT, \alpha}f(z)>t \right\} \right) = \sum_{T(Q) \in \calT_t} \textrm{Vol}(T(Q)) \cdot t^q \\
&& \quad \quad \quad \quad \le \sum_{T(Q) \in \calT_t} \textrm{Vol}(T(Q)) \cdot \left(  \textrm{Vol}(T(Q))^\alpha \langle f \rangle_{T(Q)} \right)^q \\
&& \quad \quad \quad \quad =\sum_{T(Q) \in \calT_t} \left[\textrm{Vol}(T(Q)) \right]^{1+\alpha q-q} \left(\int_{T(Q)} |f(z)|dV(z) \right)^q 
\end{eqnarray*}
By H\"older's inequality, we can bound the last term above by
\begin{eqnarray*}
&&  \quad \quad \quad \quad \le \sum_{T(Q) \in \calT_t} \left[\textrm{Vol}(T(Q)) \right]^{1+\alpha q-q} \left(\int_{T(Q)} |f(z)|^pdV(z) \right)^\frac{q}{p} \cdot  \left[\textrm{Vol}(T(Q)) \right]^{\frac{q}{p'}} \\ 
&& \quad \quad \quad \quad = \sum_{T(Q) \in \calT_t} \left(\int_{T(Q)} |f(z)|^pdV(z) \right)^\frac{q}{p} \le \left( \sum_{T(Q) \in \calT_t} \int_{T(Q)} |f(z)|^pdV(z)  \right)^{\frac{q}{p}} \\
&& \quad \quad \quad \quad \le \left(\int_\Omega |f(z)|^pdV(z) \right)^{\frac{q}{p}},
\end{eqnarray*}
where in the above estimate, we have used the identity
$$
1+\alpha q-q+\frac{q}{p'}=0. 
$$
The desired claim then follows by taking the supremum over $t>0$ in the above estimate. 
\end{proof}

\begin{thm} \label{thm004}
Let $q>p \ge 1$ and $u$ and $\varphi$ be defined as in Theorem \ref{thm001}. Denote
$$
\Z_{p, q}:=\{N \in \N: N \ge 1, N<p<q<p+N\} \neq \emptyset.
$$
Then the following statements are equivalent:
\begin{enumerate}
    \item [\textit{(i)}.] $W_{u, \varphi}: A^p \to A^q$ is bounded;  
    \item [\textit{(ii)}.] For any $f \in A^p$, 
    \begin{equation} \label{sparse-02}
    \|W_{u, \varphi}f\|_q^q \lesssim \inf_{N \in \Z_{p, q}} \left(\sum_{i=1}^{K_0} \sum_{Q \in \calD^i} \left[\emph{\textrm{Vol}}(T(Q)) \right]^{\frac{q}{p}} \langle |f|^N \rangle_{T(Q)}  \cdot \langle |f|^{q-N} \rangle_{T(Q)} \right). 
    \end{equation} 
\end{enumerate}
Note that $\Z_{p, q}$ is not empty in general, for example, when both $p$ and $q$ are large while $q-p$ is small. 
\end{thm}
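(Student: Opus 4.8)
The plan is to mimic the structure of the proof of Theorem \ref{thm002}, splitting into the two implications. The direction $(i) \Rightarrow (ii)$ is immediate: it is just Theorem \ref{thm001} specialized to the sparse domination estimate \eqref{eq11} (with $\gamma = 1$), together with the observation that $\Z_{p,q} \subseteq \{N \in \N : 1 \le N \le p\}$, so the infimum in \eqref{sparse-02} is over a subset of the indices appearing in \eqref{eq11}; hence \eqref{sparse-02} follows. (One should check $\Z_{p,q} \neq \emptyset$ in the stated range; this is a short numerics check — e.g. any integer $N$ with $p - 1 < N < p$ works since then $N \ge 1$ and $q < p+N$ is guaranteed when $q - p < 1 \le N$, and more generally when $p,q$ are large with $q-p$ small there is plenty of room.)

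For $(ii) \Rightarrow (i)$, fix $N \in \Z_{p,q}$, so $1 \le N < p < q < p+N$. As in Theorem \ref{thm002}, first reduce the double sum over all $K_0$ adjacent grids to a single grid $\calD^{i_0}$ at the cost of the constant $K_0$, and replace $\textrm{Vol}(T(Q))$ by the comparable $\textrm{Vol}(T^{\calK}(Q))$, using the disjointness of the kubes $\{T^{\calK}(Q)\}_{Q \in \calD^{i_0}}$. The key difference from the equal-exponent case is bookkeeping of the powers of $\textrm{Vol}(T(Q))$. Writing $\frac{q}{p} = 1 + (\frac{q}{p}-1)$ and distributing the extra volume factor $[\textrm{Vol}(T(Q))]^{\frac{q}{p}-1}$ as $\textrm{Vol}(T(Q))^{\alpha_1} \cdot \textrm{Vol}(T(Q))^{\alpha_2}$ with $\alpha_1 + \alpha_2 = \frac{q}{p}-1$ chosen so that $\langle |f|^N\rangle_{T(Q)}$ picks up $\textrm{Vol}(T(Q))^{\alpha_1}$ and $\langle |f|^{q-N}\rangle_{T(Q)}$ picks up $\textrm{Vol}(T(Q))^{\alpha_2}$, the summand becomes $\textrm{Vol}(T^{\calK}(Q)) \cdot \big(\textrm{Vol}(T(Q))^{\alpha_1}\langle|f|^N\rangle_{T(Q)}\big)\big(\textrm{Vol}(T(Q))^{\alpha_2}\langle|f|^{q-N}\rangle_{T(Q)}\big)$, which is pointwise on $T^{\calK}(Q)$ dominated by $\calM_{\calT^{i_0},\alpha_1}(|f|^N)(z)\cdot\calM_{\calT^{i_0},\alpha_2}(|f|^{q-N})(z)$. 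Summing over the disjoint kubes and applying H\"older with exponents $\frac{q}{N} \cdot \frac{1}{?}$ — precisely, with the pair of exponents $\big(\frac{r_1}{?}, \frac{r_2}{?}\big)$ dual in the sense $\frac{N}{q}\cdot\frac{1}{?}+\frac{q-N}{q}\cdot\frac{1}{?} = 1$ — reduces the problem to boundedness of $\calM_{\calT^{i_0},\alpha_1}: L^{p/N} \to L^{s_1}$ and $\calM_{\calT^{i_0},\alpha_2}: L^{p/(q-N)} \to L^{s_2}$ for suitable $s_1, s_2$, which is exactly Lemma \ref{fractional} provided we can arrange $\frac{N}{p} - \frac{1}{s_1} = \alpha_1$ and $\frac{q-N}{p} - \frac{1}{s_2} = \alpha_2$ with $s_1, s_2 > p/N, p/(q-N)$ respectively and $\frac{N}{s_1} + \frac{q-N}{s_2} = q$ (so that the product lands in $L^1$). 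The condition $N \in \Z_{p,q}$, i.e. $N < p$ and $q - N < p$ (from $q < p+N$) and $N \ge 1$, is precisely what makes both base exponents $\frac{p}{N} > 1$ and $\frac{p}{q-N} > 1$ so that Lemma \ref{fractional} applies, and what makes the fractional orders $\alpha_1, \alpha_2$ fall in $(0,1)$; one verifies the arithmetic closes using $\frac{1}{p} - \frac{1}{q} = \alpha_1 + \alpha_2$ and choosing $\alpha_j$ proportionally (e.g. $\alpha_1 = \frac{N}{p}-\frac{N}{q}$, $\alpha_2 = \frac{q-N}{p}-\frac{q-N}{q}$, so $s_1 = q/N$, $s_2 = q/(q-N)$).

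The main obstacle I anticipate is getting the exponent arithmetic exactly right: one must choose the split $\alpha_1 + \alpha_2 = \frac{q}{p}-1$ and the H\"older exponents simultaneously so that (a) each fractional maximal operator is applied at a scaling-admissible pair $(p_j, s_j)$ with $\frac{1}{p_j} - \frac{1}{s_j} = \alpha_j$ and $p_j > 1$, and (b) the two outputs multiply into $L^1(\Omega)$ with total mass controlled by $\|f\|_p^q$. With the natural choice $\alpha_1 = N(\frac1p - \frac1q)$, $\alpha_2 = (q-N)(\frac1p-\frac1q)$ one gets $p_1 = p/N$, $s_1 = q/N$, $p_2 = p/(q-N)$, $s_2 = q/(q-N)$, and then $\big(\int \calM_{\calT^{i_0},\alpha_1}(|f|^N)^{q/N}\big)^{N/q}\big(\int\calM_{\calT^{i_0},\alpha_2}(|f|^{q-N})^{q/(q-N)}\big)^{(q-N)/q} \lesssim \|f\|_p^N \cdot \|f\|_p^{q-N} = \|f\|_p^q$, as desired; the hypotheses $1 \le N < p$ and $q - N < p$ guarantee $p_1, p_2 > 1$ so Lemma \ref{fractional} is legitimately invoked. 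Once this is set up the remaining steps (reduction to one grid, kube disjointness, dominating averages by the fractional maximal function) are routine and parallel to Theorem \ref{thm002}.
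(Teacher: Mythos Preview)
Your proposal is correct and follows essentially the same strategy as the paper: reduce to one grid, pass from tents to disjoint kubes, dominate by (fractional) maximal operators, apply H\"older, and invoke Lemma~\ref{fractional}. The only difference is bookkeeping: you split the extra volume factor $[\textrm{Vol}(T(Q))]^{q/p-1}$ symmetrically as $\alpha_1+\alpha_2$ with $\alpha_1=N(\tfrac1p-\tfrac1q)$, $\alpha_2=(q-N)(\tfrac1p-\tfrac1q)$ and use \emph{two} fractional maximal operators $\calM_{\calT,\alpha_1}$, $\calM_{\calT,\alpha_2}$ with H\"older exponents $(q/N,\,q/(q-N))$, whereas the paper places the entire exponent $q/p-1$ on the $|f|^N$ factor (using $\calM_{\calT,\,q/p-1}$) and keeps $|f|^{q-N}$ with the ordinary maximal operator $\calM_{\calT}$, applying H\"older with $s=p/(p+N-q)$ and $s'=p/(q-N)$. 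The paper's asymmetric split is slightly simpler since only one application of Lemma~\ref{fractional} is needed (and one only has to verify $0<\tfrac{q}{p}-1<1$, which follows from $\Z_{p,q}\neq\emptyset \Rightarrow q<2p$), but your symmetric choice works equally well and the arithmetic you give closes correctly. One small correction: your stated H\"older compatibility condition ``$\tfrac{N}{s_1}+\tfrac{q-N}{s_2}=q$'' is miswritten---what you need (and what your explicit choice $s_1=q/N$, $s_2=q/(q-N)$ satisfies) is simply $\tfrac{1}{s_1}+\tfrac{1}{s_2}=1$.
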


\begin{proof}
Again, it suffices to show $(ii)$ implies $(i)$. Since $\Z_{p, q}$ is not empty, we have $p<q<2p$ and hence $0<\frac{q}{p}-1<1$. Fix any $N \in \Z_{p, q}$ and denote
$$
s=\frac{p}{p+N-q} \quad \textrm{and} \quad s'=\frac{p}{q-N}.
$$
Note that $1<\frac{p}{N}<\frac{p}{q-p}$ and 
\begin{equation} \label{eq21}
\frac{N}{q}-\frac{1}{s}=\frac{q}{p}-1. 
\end{equation}
Let $i_0 \in \{1, \dots, K_0\}$ be the index such that \begin{eqnarray*} 
&& \sum_{i=1}^{K_0} \sum_{Q \in \calD^i} \left[\textrm{Vol}(T(Q))\right]^{\frac{q}{p}} \langle |f|^N \rangle_{T(Q)} \langle |f|^{q-N} \rangle_{T(Q)}  \\
&& \quad \quad \quad  \quad \quad \quad \quad \le K_0 \sum_{Q \in \calD^{i_0}} \left[\textrm{Vol}(T(Q))\right]^{\frac{q}{p}} \langle |f|^N \rangle_{T(Q)} \langle |f|^{q-N} \rangle_{T(Q)}.
\end{eqnarray*} 
Therefore, by Lemma \ref{fractional}, 
\begin{eqnarray*}
\|W_{u, \varphi}f\|_q^q%
&\lesssim& \sum_{Q \in \calD^{i_0}} \left[\textrm{Vol}(T(Q))\right]^{\frac{q}{p}} \langle |f|^N \rangle_{T(Q)} \langle |f|^{q-N} \rangle_{T(Q)} \\
&\lesssim& \sum_{Q \in \calD^{i_0}} \textrm{Vol}(T^{\calK}(Q)) \frac{\left[\textrm{Vol}(T(Q))\right]^{\frac{q}{p}-1}}{\textrm{Vol}(T(Q))} \int_{T(Q)} |f|^N dV(z) \cdot  \langle |f|^{q-N} \rangle_{T(Q)} \\
&\le& \int_\Omega \calM_{\calT, \frac{q}{p}-1}\left(|f|^N\right) \calM_{\calT} \left(|f|^{q-N} \right) dV(z) \\
&\le& \left(\int_\Omega \calM_{\calT, \frac{q}{p}-1}\left(|f|^N \right)^s dV(z) \right)^{\frac{1}{s}} \cdot \left( \int_\Omega \calM_{\calT} (|f|^{q-N})^{s'}dV(z) \right)^{\frac{1}{s'}}\\
&\le& \left(\int_\Omega |f|^{N \cdot \frac{p}{N}}dV(z) \right)^{\frac{N}{p}}\cdot \left(\int_\Omega |f|^{s'(q-N)}dV(z) \right)^{\frac{1}{s'}} = \|f\|_p^q, 
\end{eqnarray*}
where in the last estimate, we have used \eqref{eq21}.The proof is complete. 
\end{proof}

\subsection{Compactness}
In the second part of this section, we study the compactness of $W_{u, \varphi}$ via the sparse domination estimate \eqref{eq11}. Note that when we characterize the boundedness of $W_{u, \varphi}$,  the estimate \eqref{eq11} does not contain any information from the pull-back measure $\mu_{u, \varphi, q}$ (see, Theorem \ref{thm002} and Theorem \ref{thm004}). This suggests the sparse domination estimates \eqref{sparse-01} and \eqref{sparse-02} are not enough to characterize the compactness of $W_{u, \varphi}$. The main idea is to distinguish the tents near $\mathbf b\Omega$ in the estimate \eqref{pointwisesp}. 

Here is the our main result for the compactness. 

\begin{thm} \label{thm003}
Let $q \ge 1$, $u$ and $\varphi$ be defined as in Theorem \ref{thm001}, and $W_{u, \varphi}$ be a bounded operator on  $A^q$. Then the following statements are equivalent.
\begin{enumerate}
    \item [(\textit{i}).] $W_{u, \varphi}: A^q \to A^q$ is compact;
    \item [(\textit{ii}).] Let $N \in \N$ with $1 \le N < q$. Then for any bounded set $\{f_m\}_{m \ge 1} \subset A^p$ with $\{f_m\}$ converges to $0$ uniformly on compact sets of $\Omega$, any $i \in \{1, \dots, K_0\}$ and any $Q \in \calD^i$, one has 
    \begin{eqnarray} \label{eq321}
   && \lim_{c_Q \to \mathbf b\Omega} \sup_{m \ge 1}  \frac{1}{\emph{\textrm{Vol}}(T(Q) \langle |f_m|^{q-N} \rangle_{T^{\calE}(Q)}}  \nonumber \\
   && \quad \quad \quad \quad \quad \quad \quad \cdot \sum_{c_{Q'}: \ \textrm{offspring of} \ c_Q} \mu_{u, \varphi, q} \left(T^{\calK}(Q') \right) \langle |f_m(z)|^{q-N} \rangle_{B_\Omega(c(Q'), \beta)}=0, 
    \end{eqnarray}
    where we recall that $c_Q$ is the center of the tent $T(Q)$, $T^{\calE}(Q)$ is defined as in \eqref{2001} and $\beta \in (0, 1)$ is defined in Theorem \ref{KobayashiKube}. 
\end{enumerate}
\end{thm}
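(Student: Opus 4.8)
The plan is to characterize compactness of $W_{u,\varphi}$ on $A^q$ through the standard criterion: a bounded operator $T$ on $A^q$ is compact if and only if $\|Tf_m\|_q \to 0$ for every bounded sequence $\{f_m\} \subset A^q$ converging to $0$ uniformly on compact subsets of $\Omega$. So the whole argument reduces to showing that, for such a sequence, condition \eqref{eq321} is equivalent to $\|W_{u,\varphi}f_m\|_q \to 0$. I would carry the $(i) \Rightarrow (ii)$ direction first: starting from the chain of estimates in the proof of Theorem \ref{thm001} (specialized to $p=q$), one arrives at the bound
\[
\|W_{u,\varphi}f_m\|_q^q \lesssim \sum_{i=1}^{K_0} \sum_{Q \in \calD^i} \frac{\langle |f_m|^N \rangle_{T(Q)}}{\textrm{Vol}(T(Q))} \sum_{c_{Q'}: \,\textrm{offspring of}\, c_Q} \mu_{u,\varphi,q}\!\left(T^{\calK}(Q')\right) \langle |f_m|^{q-N} \rangle_{B_\Omega(c(Q'),\beta)},
\]
but before reassembling via the maximal operator, one splits each inner double sum into the ``near-boundary'' piece (those $Q$ with $c_Q$ within some $\eps$ of $\mathbf b\Omega$) and the ``bulk'' piece. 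Compactness of $W_{u,\varphi}$ forces the bulk piece to vanish as $m \to \infty$ (the $f_m$ converge uniformly on the relevant compact set and there are only finitely many such $Q$ at each scale with bounded volume from below), and then the assumption that $\|W_{u,\varphi}f_m\|_q \to 0$, applied to carefully chosen test families localized near individual boundary tents, isolates precisely the quantity in \eqref{eq321} and forces it to go to $0$ as $c_Q \to \mathbf b\Omega$.

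For the converse $(ii) \Rightarrow (i)$, I would fix a bounded sequence $\{f_m\}$ tending to $0$ uniformly on compacta and show $\|W_{u,\varphi}f_m\|_q \to 0$. Given $\eps > 0$, condition \eqref{eq321} provides a neighborhood $U$ of $\mathbf b\Omega$ such that for all tents $T(Q)$ with $c_Q \in U$ and all $m$, the inner double sum in the displayed bound above is at most $\eps \cdot \textrm{Vol}(T(Q)) \langle |f_m|^{q-N} \rangle_{T^{\calE}(Q)}$. For the tents with $c_Q \in U$, this replacement lets one dominate the corresponding portion of $\|W_{u,\varphi}f_m\|_q^q$ by $\eps$ times the expression $\sum_i \sum_Q \textrm{Vol}(T(Q)) \langle |f_m|^N\rangle_{T(Q)} \langle |f_m|^{q-N}\rangle_{T(Q)}$, which by the proof of Theorem \ref{thm002} (maximal-operator/Hölder argument, using that $\calT$ is a Muckenhoupt basis, Theorem \ref{Mbasis}, and pairwise disjointness of kubes) is $\lesssim \|f_m\|_q^q \lesssim 1$; hence that portion is $\lesssim \eps$. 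For the remaining tents with $c_Q \notin U$ — finitely many at each scale, and the scales are controlled because such tents have volume bounded below — the integrand is supported on a fixed compact subset of $\Omega$ on which $f_m \to 0$ uniformly, and one uses the boundedness of $W_{u,\varphi}$ together with $\mu_{u,\varphi,q}(\Omega) < \infty$ to make this piece $\to 0$ as $m \to \infty$. Combining, $\limsup_m \|W_{u,\varphi}f_m\|_q^q \lesssim \eps$, and since $\eps$ is arbitrary we are done.

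The main obstacle I expect is the bookkeeping in the $(i) \Rightarrow (ii)$ direction: one must manufacture, for each boundary tent $T(Q)$, a test family that both lies in a bounded subset of $A^q$, converges to $0$ uniformly on compacta, and ``sees'' the specific weighted sum in \eqref{eq321} — essentially normalized reproducing-kernel-type functions concentrated near $c_Q$, whose $A^q$-behavior and local averages on the Kobayashi balls $B_\Omega(c(Q'),\beta)$ must be estimated using Lemma \ref{thm4.2}, Lemma \ref{lem001}, Corollary \ref{sub-mean}, and the off-diagonal kernel estimate \eqref{eq2020}. Making the lower bound quantitative enough to extract \eqref{eq321} exactly, rather than some comparable but not identical quantity, is the delicate point; one likely needs to exploit the comparability $|K(\xi,c_Q)| \simeq |K(c_Q,c_Q)| \simeq \textrm{Vol}(T(Q))^{-1}$ on $T(Q)$ (as in the proof of Theorem \ref{bounded}) to pass between the kernel-weighted integral and the geometric average over $T^{\calE}(Q)$. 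The rest — the maximal function estimates, the finite-overlap of $\{B_\Omega(c_j^k,\widetilde\beta)\}$, and the splitting into near-boundary and bulk — is routine given the machinery already assembled in Sections 2 and 3.
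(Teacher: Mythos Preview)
Your $(ii)\Rightarrow(i)$ direction is essentially the paper's argument: split the sparse sum over tents into those with $c_Q$ close to $\mathbf b\Omega$ and those in a fixed compact set, use \eqref{eq321} to insert an $\varepsilon$ in the boundary piece and then run the maximal--H\"older argument from Theorem~\ref{thm002}, and kill the bulk piece by uniform convergence of $f_m$ on compacta together with $\mu_{u,\varphi,q}\in\calC\calM_d(1)$. That part is fine.

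The $(i)\Rightarrow(ii)$ direction, however, has a real gap. Condition~(ii) is a statement about \emph{every} bounded sequence $\{f_m\}$ converging to~$0$ on compacta, so you must take such a sequence as given and prove \eqref{eq321} for it. Your displayed sparse bound is an \emph{upper} bound for $\|W_{u,\varphi}f_m\|_q^q$; knowing that the left side goes to~$0$ tells you nothing about the right side, and in particular cannot isolate the boundary sum. Switching to ``carefully chosen test families'' does not help either: even if normalized kernels $k_{c_Q}$ gave you a lower bound of the form $\|W_{u,\varphi}k_{c_Q}\|_q^q\gtrsim(\text{something})$, that something would involve $k_{c_Q}$, not the arbitrary $f_m$ that appears in \eqref{eq321}. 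The obstacle you flag at the end---getting a quantitative lower bound that matches \eqref{eq321} exactly---is not a technicality; it is the wrong route.

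The paper avoids this entirely. For $(i)\Rightarrow(ii)$ it invokes Theorem~\ref{compact}, which says compactness of $W_{u,\varphi}$ on $A^q$ is equivalent to $\mu_{u,\varphi,q}\in\calV\calC\calM_d(1)$. Once you know $\mu_{u,\varphi,q}(T(Q'))/\textrm{Vol}(T(Q'))<\varepsilon$ for all $Q'$ with $c_{Q'}$ sufficiently close to $\mathbf b\Omega$, the sum in \eqref{eq321} is estimated directly: each term picks up a factor $\varepsilon$, and the remaining sum $\sum_{Q'}\int_{B_\Omega(c(Q'),\beta)}|f_m|^{q-N}\,dV$ is $\lesssim\textrm{Vol}(T(Q))\langle|f_m|^{q-N}\rangle_{T^{\calE}(Q)}$ by the finite-overlap property. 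No test functions, no lower bounds---just the vanishing Carleson condition plugged straight into the numerator of \eqref{eq321}.
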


\begin{proof}
$(i) \Longrightarrow (ii).$  Suppose $W_{u, \varphi}$ is compact, and therefore by Theorem \ref{compact}, $\mu_{u, \varphi, q} \in \calV\calC\calM_d(1)$. Without loss of generality, we may assume that $\sup_{m \ge 1} \|f_m\|_q=1$. 

For any $\varepsilon>0$, we can find some $K_0 \in \N$, such that for any $k>K_0$, one has for any $Q\in \calD^i_k$ with $\dist(c_Q, \mathbf b\Omega)<\del^k$ that
$$
\frac{\mu_{u, \varphi, q}(T(Q))}{\textrm{Vol}(T(Q))}<\varepsilon.
$$
Therefore, for any $k>K_0, m \ge 1$, any $i \in \{1, \dots, K_0\}$ and $Q \in \calD^i_k$, we have
\begin{eqnarray*}
&&\sum_{c_{Q'}: \ \textrm{offspring of} \ c_Q} \mu_{u, \varphi, q} \left(T^{\calK}(Q) \right) \langle |f_m(z)|^{q-N} \rangle_{{B_\Omega(c(Q'), \beta)}} \\
&& \quad \quad \quad \quad \le   \sum_{c_{Q'}: \ \textrm{offspring of} \ c_Q} \mu_{u, \varphi, q} \left(T(Q) \right) \langle |f_m(z)|^{q-N} \rangle_{{B_\Omega(c(Q'), \beta)}}  \\
&&\quad \quad \quad \quad   < \varepsilon \cdot \sum_{c_{Q'}: \ \textrm{offspring of} \ c_Q} \textrm{Vol} \left(T(Q) \right) \langle |f_m(z)|^{q-N} \rangle_{{B_\Omega(c(Q'), \beta)}}  \\
&&\quad \quad \quad \quad   \simeq \varepsilon \cdot \sum_{c_{Q'}: \ \textrm{offspring of} \ c_Q}  \int_{{B_\Omega(c(Q'), \beta)}} |f_m(z)|^{q-N}dV \\
&& \quad \quad \quad \quad  \lesssim \varepsilon \cdot \textrm{Vol}(T(Q)) \langle |f_m|^{q-N} \rangle_{T^{\calE}(Q)}, 
\end{eqnarray*}
where  in the last estimate above, we have use the fact that $\{B_\Omega(c(Q), \beta)\}_{Q \in \calD^i}$ has finite overlaps. The desired claim then follows by taking the supremum over all $m \ge 0$ first and then letting $\varepsilon$ converge to $0$. 

\medskip

$(ii) \Longrightarrow (i).$ Let $\{f_m\}_{m \ge 0} \subset A^q$ be a bounded set satisfying $f_m$ converges to uniformly $0$ on compact subsets of $\Omega$. It is well know that to prove $W_{u, \varphi}$ is compact, we only need to argue 
\begin{equation} \label{eq211}
\|W_{u, \varphi} f_m\|_q \to 0 \quad \textrm{as} \quad m \to \infty.
\end{equation}
To begin with, we might assume that $\varphi(\Omega)$ is not contained in any compact subset of $\Omega$, otherwise \eqref{eq211} is clearly.

Now for each $\ell \ge 1, m \ge 1$ and $1 \le N<q$, by the proof of Theorem \ref{thm001}, we have
\begin{equation} \label{1001}
\|W_{u, \varphi} f_m\|_q^q \le \int_\Omega |f_m(w)|^N \left(\int_\Omega
 |f_m(z)|^{q-N} |K(z, w)|d\mu_{u, \varphi, q}(z) \right)dV(w) 
 \end{equation}
Take any $\varepsilon>0$ and denote
\begin{equation} \label{1002}
A_m(w):=\int_\Omega
 |f_m(z)|^{q-N} |K(z, w)|d\mu_{u, \varphi, q}(z),
\end{equation}
which, by \eqref{eq01}, is clearly bounded by 
\begin{equation} \label{1003}
\sum_{Q \in \calD^i} \frac{\one_{T(Q)}(w)}{\textrm{Vol}(T(Q))} \int_{T(Q)} |f_m(z)|^{q-N} d\mu_{u, \varphi, q}(z),
\end{equation}
for some $i \in \{1, \dots, K_0\}$. 

For any $\ell \ge 0$, denote 
$$
E_\ell=\bigcup_{1 \le k \le \ell, Q \in \calD^i_k} T^{\calK}(Q) \quad \textrm{and} \quad 
\widetilde{E_\ell}:=\bigcup_{\substack{1 \le k \le \ell \\ Q \in \calD_k }} \overline{B_\Omega (c_Q, \beta)}. 
$$
where $\beta \in (0, 1)$ is the constant defined in Theorem \ref{KobayashiKube}. Note that it is clear that $E_\ell \subset \widetilde{E_\ell}$. Moreover, $\widetilde{E_\ell}$ is also a compact subset contained in $\Omega$. 

For any $\ell \ge 0$  and $Q \in \calD^i$, we have
\begin{eqnarray} \label{1004}
&&\int_{T(Q)} |f_m(z)|^{q-N} d\mu_{u, \varphi, q}(z) \nonumber \\
&& \quad \quad \quad  = \sum_{c_{Q'}: \ \textrm{offspring of} \ c_Q} \int_{T^{\calK}(Q')} |f_m(z)|^{q-N} d\mu_{u, \varphi, q}(z) \nonumber \\
&& \quad \quad \quad  \le \sum_{c_{Q'}: \ \textrm{offspring of} \ c_Q} \frac{\mu_{u, \varphi, q} \left(T^{\calK}(Q')\right)}{\textrm{Vol}(T^{\calK}(Q'))} \int_{B_\Omega(c(Q'), \beta)} |f_m(z)|^{q-N} dV(z) \nonumber \\
&& \quad \quad \quad :=B_{1, m, \ell}(Q)+B_{2, m, \ell}(Q),
\end{eqnarray} 
where 
$$
B_{1, m, \ell}(Q):=\sum_{\substack{c_{Q'}: \ \textrm{offspring of} \ c_Q \\ T(Q') \cap E_\ell=\emptyset }} \frac{\mu_{u, \varphi, q}\left(T^{\calK}(Q')\right)}{\textrm{Vol}(T^{\calK}(Q'))} \int_{B_\Omega(c(Q'), \beta)} |f_m(z)|^{q-N} dV(z)
$$
and
$$
B_{2, m, \ell}(Q):=\sum_{\substack{c_{Q'}: \ \textrm{offspring of} \ c_Q \\ T(Q') \cap E_\ell \neq \emptyset }} \frac{\mu_{u, \varphi, q} \left(T^{\calK}(Q')\right)}{\textrm{Vol}(T^{\calK}(Q'))} \int_{B_\Omega(c(Q'), \beta)} |f_m(z)|^{q-N} dV(z).
$$
Therefore, combining \eqref{1001}, \eqref{1002} and \eqref{1004}, we have
\begin{equation} \label{3000}
\|W_{u, \varphi} f_m\|_q^q \le \int_\Omega |f_m(w)|^N A_m(w) dV(w) \lesssim C_{1, m, \ell}+C_{2, m, \ell}, 
\end{equation}
where 
$$
C_{1, m, \ell}:=\sum_{Q \in \calD_i} \langle |f_m|^N \rangle_{T(Q)} B_{1, m, \ell}(Q)
$$
and
$$
C_{2, m, \ell}:=\sum_{Q \in \calD_i} \langle |f_m|^N \rangle_{T(Q)} B_{2, m, \ell}(Q).
$$
We control $C_{2, m, \ell}$ first. Recall that since $\mu_{u, \varphi, q} \in \calC\calM_d(1)$, for any $\ell \ge 1$, we have
\begin{eqnarray*}
B_{2, m, \ell}(Q)%
&\lesssim& \sum_{\substack{c_{Q'}: \ \textrm{offspring of} \ c_Q \\ T(Q') \cap E_\ell \neq \emptyset }} \int_{B_\Omega(c(Q'), \beta)} |f_m(z)|^{q-N} dV(z) \\
&\lesssim& \int_{T^{\calE}(Q) \cap \widetilde{E_\ell}} |f_m(z)|^{q-N}dV(z).
\end{eqnarray*}
Since $\{f_m\}$ converges to $0$ uniformly on compact subsets of $\Omega$, we can take $m$ sufficiently large, such that 
$$
\int_{T^{\calE}(Q) \cap \widetilde{E_\ell}} |f_m(z)|^{q-N}dV(z)<\varepsilon \cdot \textrm{Vol}\left(T^{\calE}
(Q)\right) \lesssim \varepsilon \cdot \textrm{Vol} \left(T(Q) \right)
$$
Therefore, following a similar argument as in Theorem \ref{thm002}, 
\begin{eqnarray} \label{3001}
C_{2, m, \ell} %
&\lesssim& \varepsilon \cdot \sum_{Q \in \calD_i} \textrm{Vol}  \left(T(Q) \right) \cdot \langle |f_m|^N \rangle_{T(Q)} \nonumber \\
&\lesssim& \varepsilon \cdot \sum_{Q \in \calD_i} \textrm{Vol}  \left(T^{\calK}(Q) \right) \cdot \langle |f_m|^N \rangle_{T(Q)} \nonumber 
\\
&\le& \varepsilon \int_\Omega \calM_{\calT^i} \left(|f_m|^N \right)dV(z) \lesssim \varepsilon, 
\end{eqnarray}
where we recall that $\calT^i$ is the dyadic extension associated to $\calD^i$. 

Next, we bound the term $C_{1, m, \ell}$. By our assumption \eqref{eq321}, there exists a $L_0 \in \N$, such that for any $\ell>L_0, m \ge 1$ and $Q \in \calD^i$ with $T(Q) \cap E_\ell=\emptyset$,  
$$
\sum_{c_{Q'}: \ \textrm{offspring of} \ c_Q} \mu_{u, \varphi, q} \left(T^{\calK}(Q') \right) \langle |f_m(z)|^{q-N} \rangle_{B_\Omega(c(Q'), \beta)}<\varepsilon \textrm{Vol}(T(Q)) \langle |f_m|^{q-N} \rangle_{T^{\calE}(Q)}. 
$$
Fix such a choice of $\ell$. Then for any $m \ge 1$, we have
$$
C_{1, m, \ell} \le \varepsilon \cdot \sum_{Q \in \calD^i} \textrm{Vol}(T(Q)) \langle |f_m|^N \rangle_{T(Q)} \langle |f_m|^{q-N} \rangle_{T^{\calE}(Q)}.
$$
Following a similar argument in the proof of Lemma \ref{pointwise}, we can further bound the last term in the above estimate by
$$
\varepsilon \cdot \sum_{t=1}^{K_0} \sum_{Q \in \calD^t}\textrm{Vol}(T(Q)) \langle |f_m|^N \rangle_{T(Q)} \langle |f_m|^{q-N} \rangle_{T^(Q)},
$$
which, again by the proof of Theorem \ref{thm002}, gives 
\begin{equation} \label{4000}
C_{1, m, \ell} \lesssim \varepsilon.
\end{equation} 
The desired claim \eqref{eq211} then clearly follows from \eqref{3000}, \eqref{3001} and \eqref{4000}. 
\end{proof}

\medskip
\subsection{Weighted estimates for $W_{u, \varphi}$.}  In the last part of this section, we establish a new weighted estimate associated to the weighted composition operators acting on Bergman spaces on strictly  pseudoconvex domain.

\begin{defn}
Given $q \ge 1$, $u \in H(\Omega)$ and $\varphi: \Omega \to \Omega$, the weight class $\mathbf B_{u, \varphi}^q$ is defined to the collection of all non-negative functions $\omega$ on $\Omega$ satisfying
$$
[\omega]_{\mathbf B_{u, \varphi}^q}:=\sup_{\xi \in \Omega} \int_\Omega |u(z)|^q \omega(z) |K(\varphi(z), \xi)|dV(z)<\infty. 
$$
\end{defn}

We make several remarks for $\mathbf B_{u, \varphi}^q$. 
\begin{rem}
\begin{enumerate}
    \item [(1).] Here, $\mathbf B$ means that we define such a weight via a  \emph{``Bergman projection"-like transformation}, which is clear from the definition;
    \item [(2).] In general, if $\omega \in \mathbf B_{u, \varphi}^q$, the measure $\omega dV$ might not be Carleson (see, \cite[Remark 4.8]{HLSW19} for an example) and hence it is of its own interest;
    \item [(3).] The weight class $\mathbf B_{u, \varphi}^q$ can be interpreted as a version of Sawyer-testing condition. Such a condition was first introduced by Sawyer \cite{S82} in 1982 to study the behavior of Hardy-Littlewood maximal operators acting on weighted $L^p$ spaces, and later, the same idea has been applied by many authors in studying other function spaces and operators, such as \cite{APR17, PRW19}. 
    
\end{enumerate}
\end{rem}

We have the following result. 

\begin{thm} 
Let $q>1, 1<s<q'$, $u$ and $\varphi$ be defined as in Theorem \ref{thm001} and $\omega^{s'} \in \mathbf B_{u, \varphi}^q$, where $q'$ is the conjugate of $q$ (respectively, $s'$ is the conjugate of $s$). Let further, $W_{u, \varphi}$ be a bounded operator on $A^q$. Then the following weighted estimate hold
\begin{equation} \label{weightest01}
\int_\Omega \left|W_{u, \varphi}f (z) \right|^q \omega(z)dV(z) \lesssim \left[ \omega^{s'} \right]_{B_{u, \varphi}^q}^{\frac{1}{s'}} \|f\|_q^q, 
\end{equation}
where the implicit constant in the above estimate is independent of the choice of $f$ and weight $\omega$. 
\end{thm}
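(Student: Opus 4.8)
The plan is to reduce \eqref{weightest01} to two applications of H\"older's inequality sandwiching the pointwise/maximal machinery of Sections~2--4; the role of the hypothesis $1<s<q'$ is precisely to make both H\"older steps balance and to keep the relevant maximal operator $L^r\to L^r$ bounded for $r=q'/s>1$.

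First, using the reproducing formula \eqref{integralreps} (valid since $f\in A^q\subseteq A^1$) at the point $\varphi(z)\in\Omega$ and then Tonelli, I would write
$$
\int_\Omega |W_{u,\varphi}f(z)|^q\omega(z)\,dV(z)\le\int_\Omega |f(w)|\left(\int_\Omega |u(z)|^q|f(\varphi(z))|^{q-1}\omega(z)\,|K(\varphi(z),w)|\,dV(z)\right)dV(w).
$$
Denote the inner integral by $G(w)$ and apply H\"older in $z$ with exponents $s$ and $s'$ to the factorization $|f(\varphi(z))|^{q-1}\cdot\omega(z)$:
$$
G(w)\le\left(\int_\Omega |u(z)|^q\omega(z)^{s'}|K(\varphi(z),w)|\,dV(z)\right)^{1/s'}\left(\int_\Omega |u(z)|^q|f(\varphi(z))|^{(q-1)s}|K(\varphi(z),w)|\,dV(z)\right)^{1/s}.
$$
The first factor is at most $[\omega^{s'}]_{\mathbf B_{u,\varphi}^q}^{1/s'}$ by the definition of the weight class (take the supremum over $\xi=w$). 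Rewriting the second integral against the pull-back measure, it equals $H(w):=\int_\Omega |f(z)|^{(q-1)s}|K(z,w)|\,d\mu_{u,\varphi,q}(z)$, so that $G(w)\le[\omega^{s'}]_{\mathbf B_{u,\varphi}^q}^{1/s'}H(w)^{1/s}$.

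Next I would control $H$. Since $W_{u,\varphi}$ is bounded on $A^q$, Theorem \ref{bounded} gives $\mu_{u,\varphi,q}\in\calC\calM_d(1)$; also $(q-1)s<(q-1)q'=q$, so $|f|^{(q-1)s}\in L^{q/((q-1)s)}_{\mathrm{loc}}$. Repeating the argument of Lemma \ref{pointwise} with $p=q$ (so the factors $\textrm{Vol}(T(Q))^{q/p-1}$ all equal $1$) and with the exponent $(q-1)s$ in place of $q-N$ --- that is, first \eqref{eq01} coming from Lemma \ref{lem001}, then Corollary \ref{sub-mean} over the offspring tree, then the adjacent-cube replacement of Proposition \ref{adjac} --- yields
$$
H(w)\lesssim\sum_{i=1}^{K_0}\calM_{\calT^i}\!\left(|f|^{(q-1)s}\right)(w),
$$
where $\calT^i$ is the dyadic extension of $\calD^i$. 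Because $r:=q'/s>1$ (here the hypothesis $s<q'$ is used), Theorem \ref{Mbasis} --- i.e. \eqref{maximal} with $\omega\equiv1$ --- gives the strong $(r,r)$ bound for each $\calM_{\calT^i}$, hence
$$
\|H\|_{q'/s}\lesssim\big\||f|^{(q-1)s}\big\|_{q'/s}=\|f\|_q^{(q-1)s},
$$
using the identity $(q-1)s\cdot(q'/s)=(q-1)q'=q$.

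Finally, combining the estimates above and applying H\"older in $w$ with exponents $q$ and $q'$,
$$
\int_\Omega |W_{u,\varphi}f|^q\omega\,dV\le[\omega^{s'}]_{\mathbf B_{u,\varphi}^q}^{1/s'}\int_\Omega |f(w)|H(w)^{1/s}\,dV(w)\le[\omega^{s'}]_{\mathbf B_{u,\varphi}^q}^{1/s'}\,\|f\|_q\,\|H\|_{q'/s}^{1/s}\lesssim[\omega^{s'}]_{\mathbf B_{u,\varphi}^q}^{1/s'}\|f\|_q^q,
$$
which is \eqref{weightest01}. I expect the main obstacle to be the middle step: one has to check that the proof of Lemma \ref{pointwise} genuinely survives the passage to the non-integer exponent $(q-1)s$ and, more delicately, that the adjacent-grid replacement produces an honest domination of $H$ by finitely many maximal operators attached to Muckenhoupt bases so that Theorem \ref{Mbasis} can be invoked; granting that, the remaining ingredients are just the two H\"older inequalities and the exponent bookkeeping $(q-1)q'=q$.
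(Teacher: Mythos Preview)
Your overall architecture --- reproducing formula, H\"older in $z$ with exponents $(s,s')$ to peel off the factor $[\omega^{s'}]_{\mathbf B_{u,\varphi}^q}^{1/s'}$, then H\"older in $w$ with exponents $(q,q')$ --- is exactly the paper's strategy, and your exponent bookkeeping is correct. The gap is precisely where you yourself flag it: the claimed pointwise inequality
\[
H(w)\;\lesssim\;\sum_{i=1}^{K_0}\calM_{\calT^i}\!\left(|f|^{(q-1)s}\right)(w)
\]
does \emph{not} follow from the argument of Lemma~\ref{pointwise}. What Lemma~\ref{pointwise} (with $p=q$) actually produces is
\[
H(w)\;\lesssim\;\sum_{i=1}^{K_0}\sum_{Q\in\calD^i:\,w\in T(Q)}\bigl\langle |f|^{(q-1)s}\bigr\rangle_{T(Q)},
\]
and for fixed $w$ the tents containing $w$ form a nested chain of length $\sim\log\bigl(1/\dist(w,\mathbf b\Omega)\bigr)$, so this sum is \emph{not} controlled by the supremum defining $\calM_{\calT^i}$. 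Concretely, take $\mu_{u,\varphi,q}=V$ (trivially in $\calC\calM_d(1)$) and $g\equiv 1$: then $H(w)=\int_\Omega|K(z,w)|\,dV(z)\simeq\log\bigl(1/\dist(w,\mathbf b\Omega)\bigr)$ is unbounded, while $\calM_{\calT^i}(1)\equiv 1$. So the pointwise domination fails, and consequently your route to $\|H\|_{q'/s}\lesssim\||f|^{(q-1)s}\|_{q'/s}$ breaks down.

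The paper avoids this by never seeking a pointwise maximal bound on $H$. Instead it keeps the full sparse-type sum, uses the subadditivity $(\sum a_Q)^{1/s}\le\sum a_Q^{1/s}$ (here $1/s<1$) to pull the power inside, and integrates against $|f|$ to obtain
\[
\int_\Omega |f|\,H^{1/s}\,dV\;\lesssim\;\sum_{Q}\textrm{Vol}(T(Q))\,\langle|f|\rangle_{T(Q)}\,\bigl\langle|f|^{(q-1)s}\bigr\rangle_{T(Q)}^{1/s}.
\]
Only \emph{then} does one pass to maximal functions, via the disjoint-kube trick $\textrm{Vol}(T(Q))\simeq\textrm{Vol}(T^{\calK}(Q))$ as in Theorem~\ref{thm002}: on each kube $T^{\calK}(Q)$ one has $\langle\cdot\rangle_{T(Q)}\le\calM_{\calT^i}(\cdot)$, and the kubes are pairwise disjoint, so the sum is dominated by $\int_\Omega \calM_{\calT^i}(|f|)\,\bigl(\calM_{\calT^i}(|f|^{(q-1)s})\bigr)^{1/s}dV$. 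H\"older with $(q,q')$ and the strong $(q,q)$ and $(q'/s,q'/s)$ bounds from Theorem~\ref{Mbasis} then finish the proof. Your final H\"older step in $w$ is correct; what is missing is this intermediate ``sparse sum $\to$ kube disjointness'' passage in place of the false pointwise estimate.
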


\begin{proof}
The proof of the estimate \eqref{weightest01} follows from the spirit of Theorem \ref{thm001} and Theorem \ref{thm002}. Note that
\begin{eqnarray*} 
&&\textrm{LHS of \eqref{weightest01}} \nonumber \\
&\lesssim& \int_\Omega |f(\xi)| \left( \int_\Omega |f \circ \varphi(z)|^{q-1}|u(z)|^q \omega(z) |K(\varphi(z), \xi)|dV(z) \right)dV(\xi) \nonumber \\
&=& \int_\Omega |f(\xi)| \bigg(\int_\Omega |f\circ \varphi(z)|^{q-1}|u(z)|^{\frac{q}{s}} |K(\varphi(z), \xi)|^{\frac{1}{s}} \nonumber \\
&& \quad \quad \quad \quad \quad  \quad \quad \quad \cdot \ |u(z)|^{\frac{q}{s'}} \omega(z) |K(\varphi(z), \xi)|^{\frac{1}{s'}} dV(z) \bigg) dV(\xi) \nonumber \\
&\le& \int_\Omega |f(\xi)| \left(\int_\Omega |f \circ \varphi(z)|^{s(q-1)} |u(z)|^q |K(\varphi(z), \xi)|dV(z) \right)^{\frac{1}{s}} \nonumber  \\
&& \quad \quad \quad \quad \quad \quad \quad \quad \cdot \left(\int_\Omega |u(z)|^q \omega^{s'}(z) |K(\varphi(z), \xi)| dV(z) \right)^{\frac{1}{s'}} dV(\xi) \nonumber \\
&\le& \left[ \omega^{s'} \right]_{\mathbf B_{u, \varphi}^q}^{\frac{1}{s'}} \int_\Omega |f(\xi)| \left(\int_\Omega |f(z)|^{s(q-1)} |K(z, \xi)|d\mu_{u, \varphi, q}(z) \right)^{\frac{1}{s}} dV(\xi).
\end{eqnarray*}
Denote the integral in the above line by $I$. Using Lemma \ref{pointwise}, there exists some $i \in \{1, \dots, K_0\}$, such that
\begin{eqnarray*}
I%
&\lesssim& \int_\Omega |f(\xi)| \left(\sum_{Q \in \calD^i} \frac{\one_{T(Q)}(\xi)}{\textrm{Vol}(T(Q))} \int_{T(Q)} |f(z)|^{s(q-1)} dV(z) \right)^{\frac{1}{s}} dV(\xi) \\
&\lesssim& \int_\Omega |f(\xi)| \sum_{Q \in \calD^i} \frac{\one_{T(Q)}(\xi)}{\textrm{Vol}(T(Q))^{\frac{1}{s}}} \left(\int_{T(Q)} |f(z)|^{s(q-1)} dV(z) \right)^{\frac{1}{s}} dV(\xi) \\
&=& \sum_{Q \in \calD^i} \textrm{Vol}(T(Q)) \langle |f| \rangle_{T(Q)} \left[\langle |f|^{s(q-N)} \rangle_{T(Q)} \right]^{\frac{1}{s}} \\
&\lesssim& \int_{\Omega} \calM_{\calT^i}(|f|) \left( \calM_{\calT^i}(|f|^{(q-1)s}) \right)^{\frac{1}{s}} dV(z) \\
&\le& \left(\int_\Omega |\calM_{\calT^i}(|f|)|^q dV(z) \right)^{\frac{1}{q}} \cdot \left(\int_\Omega \left(\calM_{\calT^i}(|f|^{(q-1)s}) \right)^{\frac{q'}{s}} dV(z) \right)^{\frac{1}{q'}} \\
&\lesssim& \|f\|_q \cdot \|f\|_q^{q-1}=\|f\|_q^q,
\end{eqnarray*}
where in the last estimate, we use the fact that $s<q'$. The proof is complete. 
\end{proof}


\begin{thebibliography}{ABCD99}

\bibitem{Aba89} M. Abate, \textit{Iteration theory of holomorphic maps on taut manifolds}. Mediterranean Press, Cosenza, 1989. 

\bibitem{AS11} M. Abate, A. Saracco, Carleson measures and uniformly discrete sequences in strictly  pseudoconvex
domains. \textit{J. Lond. Math. Soc}. {\bf 83}(2), 587--605 (2011)

\bibitem{Aladro} G. J. Aladro. Some consequences of the Boundary behavior of the Caratheodory and Kobayashi metrics and applications to normal holomorphic functions. \emph{Thesis}, Pennsylvania State University, 1986.

\bibitem{APR17} A. Aleman, S. Pott and M. Reguera, Sarason conjecture on the Bergman space, \textit{Int. Math. Res. Not}. IMRN 2017, no. 14, 4320–-4349.

\bibitem{ARS06} N. Arcozzi, R. Rochberg, and E. Sawyer, Carleson measures and interpolating sequences for Besov spaces on complex balls, \textit{Mem. Amer. Math. Soc}. {\bf 182} (2006), no. 859, vi+163.

\bibitem{BB00} Z. M. Balogh and M. Bonk. Gromov hyperbolicity and the Kobayashi metric on strictly pseudoconvex domains. \emph{Comment. Math. Helv}., {\bf 75} (3):504--533, 2000.

\bibitem{BS76}
C.~L. {Boutet de Monvel} and J.~Sj\"ostrand.
\newblock Sur la singularit\'e des noyaux de {B}ergman et de {S}zeg{\H o}.
\newblock In {\em Journ\'ees: \'{E}quations aux {D}\'eriv\'ees {P}artielles de
  {R}ennes (1975)}, pages 123--164. Ast\'erisque, No. 34--35. Soc. Math.
  France, Paris, 1976.

\bibitem{CM95}  J.A. Cima, P.R. Mercer, Composition operators between Bergman spaces on convex domains in $C^n$. \textit{J. Operator Theory} {\bf 33} (1995), 363--369.

\bibitem{CR16} J. Conde-Alonso and G. Rey, A pointwise estimate for positive dyadic shifts and some applications, \textit{Math. Ann}. {\bf 365} (2016), 1111--1135. 

\bibitem{CDO18} A. Culiuc, F. Di Plinio and Y. Ou, Uniform sparse domination of singular integrals via dyadic
shifts, \textit{Math. Res. Lett}. {\bf 25} (2018), 21--42. 

\bibitem{CMP11} D. Cruz-Uribe, J. M. Martell, C. P\'erez, \textit{Weights, Extrapolation and the Theory of Rubio de Francia} (Operator Theory: Advances and Applications 215), Birkh\"auser/Springer (Basel,
2011).

\bibitem{CZ04} Z. Cu\v ckovi\'c and R. Zhao, Weighted composition operators on the Bergman space, \emph{J. London
Math. Soc}. {\bf 70} (2004), 499--511.

\bibitem{E08} M. Engli\v s, Toeplitz operators and weighted Bergman kernels, \textit{J. Funct. Anal}. {\bf 255} (6) (2008) 1419--1457. 

\bibitem{GHK20} C. Gan, B. Hu, and I. Khan. Dyadic decomposition of convex domains of finite type and applications. 2020. 

\bibitem{Graham} I. Graham, Boundary behavior of the Caratheodory and Kobayashi metrics on strictly  pseudoconvex domains in $\mathbf C^n$ with smooth boundary, \textit{Tran. Amer. Math. Soc.} {\bf 207}: 219--240, 1975.

\bibitem{Fefferman} C.~Fefferman. \newblock The {Bergman} kernel and biholomorphic mappings of pseudoconvex
domains. \newblock {\em Invent. Math.}, 26:1--65, 1974.

\bibitem{HKZ00} H. Hedenmalm, B. Korenblum and K. Zhu, Theory of Bergman Spaces, Springer-Verlag,
2000.

\bibitem{HLSW19} B. Hu, S. Li, Y. Shi, B. Wick, Sparse domination of weighted composition operators on weighted Bergman spaces, \textit{J. Funct. Anal}. {\bf 280} (2021), no. 6, 108897, 26 pp.

\bibitem{HWW20a}  Z. Huo, N. A. Wagner, and B. D. Wick. A Bekoll\'e-Bonami class of weights for certain pseudoconvex domains.  accepted in J. Geom. Anal, 2020. 

\bibitem{HWW20b}  Z. Huo, N. A. Wagner, and B. D. Wick. Bekoll\'e-Bonami estimates on some pseudoconvex domains. 2020. (preprint). 

\bibitem{HW20a} Z. Huo and B.D.Wick, Weighted estimates for the Bergman projection on the Hartogs triangle. \textit{J. Funct. Anal}., {\bf 279}(9):108727, 2020.

\bibitem{HW20}  Z. Huo and B. D. Wick. Weighted estimates of the Bergman projection with matrix weights, 2020 (preprint). 

\bibitem{H12} T. Hyt\"onen, The sharp weighted bound for general Calder\'on-Zygmund operators, \textit{Ann. Math}.
{\bf 175} (2012), 1473--1506.

\bibitem{HK12} T. Hyt\"onen and A. Kairema. Systems of dyadic cubes in a doubling metric space. \textit{Colloq. Math}., {\bf 126}:1--33, 2012. 


\bibitem{JP93} M. Jarnicki, P. Pflug, \textit{Invariant distances and metrics in complex analysis}. Walter de Gruyter \& co., Berlin, 1993.

\bibitem {J10} M. Jasiczak, Carleson embedding theorem on convex finite type domains, J. Math. Anal. Appl. 362 (2010) 167–189.

\bibitem{K98} S. Kobayashi, \textit{Hyperbolic complex spaces. Springer-Verlag}, Berlin, 1998.

\bibitem{L17}  M. Lacey, An elementary proof of the A2 bound, \textit{Israel J. Math}. {\bf 217} (2017), 181–-195.

\bibitem{L13a} A. Lerner, A simple proof of the $A_2$ conjecture, \textit{Int. Math. Res. Not}. {\bf 2013} (2013), 3159--3170.

\bibitem{L13b} A. Lerner, On an estimate of Calde\'orn-Zygmund operators by dyadic positive operators, \textit{J.
Anal. Math}. {\bf 121} (2013), 141--161.

\bibitem{LN} A. Lerner and F. Nazarov, Intuitive dyadic calculus: the basics, to appear in \textit{Expo. Math}.


\bibitem{L95} S. Li, Trace ideal criteria for composition operators on Bergman spaces, \textit{Amer. J. Math}. {\bf 117}
(1995), 1299--1323.

\bibitem{McN94a} J. D. McNeal. The Bergman projection as a singular integral operator. {\it J. Geom. Anal}., {\bf 4}(1):91--103, 1994.

\bibitem{McN94b} J. D. McNeal. Estimates on the Bergman kernels of convex domains. {\it Adv. Math}., {\bf 109} (1):108--139, 1994. 

\bibitem{McN03} J. D. McNeal. Subelliptic estimates and scaling in the $\partial$-Neumann problem. In \textit{Explorations in complex and Riemannian geometry}, volume 332 of Contemp. Math., pages 197--217. Amer. Math. Soc., Providence, RI, 2003. 

\bibitem{NRSW3}
A.~Nagel, J.~Rosay, E.~M. Stein, and S.~Wainger.
\newblock Estimates for the {B}ergman and {S}zeg\"{o} kernels in {${\bf C}^2$}.
\newblock {\em Ann. of Math. (2)}, 129(1):113--149, 1989.

\bibitem{PRW19}  J. Pel\'aez, J. R\"atty\"a and B. Wick, Bergman projection induced by kernel with integral representation, \textit{J. Anal. Math}. {\bf 138} (2019), no. 1, 325--360.

\bibitem{RTW17}  R. Rahm, E. Tchoundja, and B. D. Wick. Weighted estimates for the Berezin transform and Bergman projection on the unit ball. {\it Math. Z}., {\bf 286}(3--4): 1465--1478, 2017.

\bibitem{S82} E. Sawyer, A characterization of a two-weight norm inequality for maximal operators, \textit{Studia
Math}. {\bf 75} (1982), no. 1, 1–-11.

\bibitem{Sibony} N. Sibony, A class of hyperbolic manifolds, in \textit{Recent Developments in Several Complex Variables}, Ann. of  Math. Stud. {\bf 100}, 1981.

\bibitem{S96} W. Smith, Composition operators between Bergman and Hardy spaces, \textit{Trans. Amer. Math.
Soc}. {\bf 348} (1996), 2331--2348.

\bibitem{Zhu04} K. Zhu, \textit{Spaces of Holomorphic Functions in the Unit Ball}, Springer-Verlag, 2004.

\end{thebibliography}
\end{document}